\DeclareRobustCommand{\SkipTocEntry}[4]{}
\begin{document}

\newtheorem{theorem}{Theorem}[section]
\newtheorem*{teoA}{Theorem A}
\newtheorem*{teoB}{Theorem A'} 
\newtheorem*{teoC}{Theorem B}
\newtheorem*{teoD}{Theorem B'}
\newtheorem*{nuevo}{Theorem C}
\newtheorem*{nuevo2}{Theorem D}
\newtheorem*{invariance}{Invariance Principle}
\newtheorem{defn}[theorem]{Definition}
\newtheorem*{defn*}{Definition}
\newtheorem{remark}[theorem]{Remark}
\newtheorem{claim}{Claim}
\newtheorem{lemma}[theorem]{Lemma}
\newtheorem{prop}[theorem]{Proposition}
\newtheorem*{cor-intro}{Corollary}
\newtheorem{cor}[theorem]{Corollary}
\newtheorem{cor-app}{Corollary}
\newtheorem{obs}[theorem]{Remark}
\newtheorem*{propf}{Proposition 5.2}
\newtheorem*{propu}{Proposition 5.1}

\title[$C^1$-openness of non-uniform hyperbolicity]{$C^1$-openness of non-uniform hyperbolic diffeomorphisms with bounded $C^2$ norm}
\author{Chao Liang}
\thanks{C.L. has been supported by NNSFC(\#11471344) and CUFE Young Elite Teacher Project (\#QYP1705).}
\address{School of Statistics and Mathematics, Central University of Finance and Economics, Beijing, 100081, China}
\email{chaol@cufe.edu.cn}

\author{Karina Marin}
\thanks{K.M. has been supported by INCTMat-CAPES during her postdoctoral fellowship at PUC-Rio.}
\address{Departamento de Matem\'atica, Instituto de Ci\^encias Exatas (ICEx), Universidade Federal de Minas Gerais, Brazil}
\email{kmarin@mat.ufmg.br}

\author{Jiagang Yang}
\thanks{J.Y. has been partially supported by CNPq, FAPERJ, and PRONEX}
\address{Departamento de Geometria, Instituto de Matem\'atica e Estat\'istica, Universidade Federal Fluminense, Niter\'oi, Brazil}
\email{yangjg@impa.br}

\begin{abstract}
We study the $C^1$-topological properties of the subset of non-uniform hyperbolic diffeomorphisms in a certain class of $C^2$ partially hyperbolic symplectic systems which have bounded $C^2$ distance to the identity. In this set, we prove the stability of non-uniform hyperbolicity as a function of the diffeomorphism and the measure, and the existence of an open and dense subset of continuity points for the center Lyapunov exponents. These results are generalized to the volume-preserving context. 
\end{abstract}

\subjclass[2010]{37D25;37D30} 
	
\maketitle
%\tableofcontents

\section{Introduction}

In the 1970's, Pesin proposed a more flexible notion of hyperbolicity called \textit{non-uniform hyperbolicity}. A diffeomorphism is said to be non-uniform hyperbolic if all its Lyapunov exponents are different from zero almost everywhere with respect to some preferred invariant measure, for instance, a volume measure. While being more general, non-uniform hyperbolicity still has many important consequences. Most notably: the stable manifold theorem (Pesin \cite{P}), the abundance of periodic points and Smale horseshoes (Katok \cite{K}) and the fact that the fractal dimension of invariant measures is well defined (Ledrappier and Young \cite{LY2} and Barreira, Pesin and Schmelling \cite{BPS}).

In order to understand the properties of the subset of non-uniform hyperbolic systems, we need to understand how the Lyapunov exponents vary with the diffeomorphism in the different topologies.

For the $C^1$ case, Ma\~n\'e observed that every area-preserving diffeomorphism that is not Anosov can be $C^1$-approximated by diffeomorphisms
with zero Lyapunov exponents. His arguments were completed by Bochi \cite{B1} and were extended to arbitrary dimension by Bochi and Viana \cite{B2,BV1}. In particular, Bochi \cite{B2} proved that the set of non-uniform hyperbolic diffeomorphisms is not $C^1$ open among partially hyperbolic symplectic systems. 

Recently, the authors proved in \cite{LMY} that the situation is very different when we have more regular systems. More precisely, they consider a $C^2$ open subset of partially hyperbolic symplectic diffeomorphisms with 2-dimensional center bundle, denoted by $\mathcal{B}^2_{\omega}(M)$, and proved that the subset of non-uniform hyperbolic systems in $\mathcal{B}^2_{\omega}(M)$ is $C^2$-open. 

In this paper, we are interested in the behavior between these two cases. We proved that in order to obtain $C^1$-openess of the subset of non-uniform hyperbolic diffeomorphisms, we need to ask for them to be at a $C^2$-bounded distance to the identity. In other words, the phenomenon observed by Bochi in \cite{B2} can only be achieved by an explosion of the $C^2$ norm. 

For every fixed $N>0$, we denote $\mathcal{E}^N_{\omega}(M)$ a subset of $C^2$ partially hyperbolic symplectic diffeomorfisms, where $f\in \mathcal{E}^N_{\omega}(M)$ if it is accessible, has 2-dimensional center bundle, satisfy certain pinching and bunching conditions and $dist_{C^2}(f,id)<N$. (All the key words will be defined in the next section.)

\begin{teoA} The subset of non-uniformly hyperbolic diffeomorphisms in $\mathcal{E}^N_{\omega}(M)$ is $C^1$-open.
\end{teoA}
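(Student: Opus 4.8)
The plan is to show that the center Lyapunov exponents vary continuously (in fact lower-semicontinuously in absolute value, and this is enough) on $\mathcal{E}^N_\omega(M)$, so that the non-vanishing of the exponents is a $C^1$-open condition. Fix $f_0\in\mathcal{E}^N_\omega(M)$ which is non-uniformly hyperbolic, meaning both center exponents $\lambda^c_1<0<\lambda^c_2$ are nonzero $\omega$-almost everywhere; since $f_0$ is symplectic the two center exponents are opposite, so it suffices to bound $\lambda^c_2$ away from $0$ for all $g$ that are $C^1$-close to $f_0$. The key structural inputs are: (i) the uniform pinching and bunching hypotheses built into $\mathcal{E}^N_\omega(M)$ guarantee that the center bundle $E^c$ varies continuously and that the center cocycle admits a well-behaved (Hölder, fiber-bunched) holonomy structure, uniformly over the class; (ii) accessibility of $f_0$, which by Didier's theorem is $C^1$-open, persists for nearby $g$; (iii) the $C^2$-bounded distance to the identity gives a uniform constant controlling the distortion/nonlinearity of the dynamics, which is exactly the feature absent in Bochi's examples.

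First I would recall from \cite{LMY} the mechanism that makes $f_0$ non-uniformly hyperbolic robust in the $C^2$ topology: an invariance principle argument (in the spirit of Avila–Viana, Ledrappier) shows that if the center exponent vanished on a positive measure set then the disintegration of $\omega$ along center leaves would be invariant under stable and unstable holonomies, and accessibility would then force $\omega$ to be $u$- and $s$-invariant, contradicting the pinching/bunching-driven structure (or producing an invariant section of the projectivized cocycle incompatible with partial hyperbolicity). The task is to make every estimate in this dichotomy depend only on $N$, on the partial hyperbolicity constants, and on the pinching/bunching constants — none of which degenerate on $\mathcal{E}^N_\omega(M)$ — rather than on $f_0$ itself.

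The core steps, in order: (1) Show accessibility and the pinching/bunching inequalities are $C^1$-open, so a $C^1$-neighborhood $\mathcal{U}$ of $f_0$ stays inside a set where the holonomies are uniformly controlled; here one uses that these are $C^1$-open conditions and that $dist_{C^2}(\cdot,id)<N$ is itself $C^1$-open within the $C^2$ systems. (2) Establish a uniform lower bound: there is $c=c(N,\text{constants})>0$ and a $C^1$-neighborhood $\mathcal{U}$ of $f_0$ such that for every $g\in\mathcal{U}\cap\mathcal{E}^N_\omega(M)$ one has $\lambda^c_2(g,\omega)\geq c$. This is proved by contradiction: take $g_n\to f_0$ in $C^1$ with $\lambda^c_2(g_n,\omega)\to 0$; pass to the limit cocycle (the $C^1$ convergence of the derivatives plus the uniform $C^2$ bound gives precompactness of the relevant Hölder holonomy data) and apply the invariance principle to conclude that the limit center cocycle over $f_0$ has zero exponent with an invariant measurable conformal structure, then run the accessibility argument to contradict the non-uniform hyperbolicity of $f_0$. (3) Conclude: on $\mathcal{U}\cap\mathcal{E}^N_\omega(M)$ both center exponents are bounded away from $0$, and the remaining (stable/unstable) exponents are uniformly hyperbolic by partial hyperbolicity, so every such $g$ is non-uniformly hyperbolic; hence the non-uniformly hyperbolic subset is $C^1$-open in $\mathcal{E}^N_\omega(M)$.

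The main obstacle is Step (2): one must pass to a $C^1$ limit while retaining enough regularity of the stable/unstable holonomies to invoke the invariance principle, which ordinarily requires $C^{1+\alpha}$ or fiber-bunching control. This is precisely where the hypothesis $dist_{C^2}(f,id)<N$ earns its keep — it furnishes a uniform modulus of continuity for $Dg$ and hence uniform Hölder constants for the holonomies along the sequence $g_n$, so the limiting objects over $f_0$ are as regular as needed; without such a bound (Bochi's setting) the holonomies can degenerate in the limit and the argument genuinely fails. A secondary technical point is ensuring the disintegration/conditional-measure arguments are uniform in $g$, which again follows from the uniform bunching constants defining $\mathcal{E}^N_\omega(M)$.
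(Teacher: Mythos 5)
Your plan hinges on two steps that do not work as stated. First, you aim for a \emph{uniform} lower bound $\lambda^c_2(g,\omega)\geq c>0$ on a $C^1$-neighborhood of a non-uniformly hyperbolic $f_0$, arguing by contradiction along a sequence $g_n\to f_0$ with $\lambda^c_2(g_n,\omega)\to 0$. But the Invariance Principle of Avila--Santamaria--Viana applies only to systems whose extremal center exponents \emph{coincide}; it says nothing about systems with small but nonzero exponents, so it cannot be invoked along your sequence $g_n$. Nor can it be applied ``to the limit cocycle over $f_0$'' to conclude that this cocycle ``has zero exponent'': the limit cocycle is just $Df_0\vert E^c$, whose exponents are nonzero by hypothesis, and exponents are not continuous a priori (that is the whole issue). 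The statement actually needed for Theorem A is weaker and cleaner: the set $\{\lambda^c_1=\lambda^c_2\}$ is $C^1$-closed in $\mathcal{E}^N_\omega(M)$, i.e.\ if $f_k\to f$ with $\lambda^c_1(f_k)=\lambda^c_2(f_k)$ exactly, then $\lambda^c_1(f)=\lambda^c_2(f)$. In that setting the Invariance Principle \emph{is} legitimately applied to each $f_k$, yielding continuous disintegrations $\{m_{k,x}\}$ invariant under both holonomies.

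Second, even after this reduction, the heart of the matter is missing from your sketch: (i) one must pass $u$-states (and $su$-states) to the limit as $f_k\to f$ in $C^1$ with the base changing, which for the derivative cocycle is the paper's main technical contribution (it is proved via the Tahzibi--Yang entropy criterion and upper semicontinuity of the partial entropy along unstable foliations, Sections 4--5, together with the Ledrappier--Young formula to ensure $h_{\mu}(f_k,W^u_k)\to h_\mu(f,W^u)$); and (ii) ``run the accessibility argument'' is not an argument --- accessibility enters only through ASV to make the disintegrations continuous, while the contradiction is obtained from a pinching hyperbolic periodic point of $f$ (which exists by Katok's theorem because $f$ is non-uniformly hyperbolic and symplectic), forcing each $m_{k,x}$ to be atomic with at most two atoms; the continuity of holonomies in $f$ (where the $C^2$ bound $N$ is genuinely used, as you correctly sensed) then produces continuous invariant sections close to the supports of $m^{+}$ and $m^{-}$, and one contradicts either the ergodicity of $m_k$ or the positivity of $\lambda_{+}(F')$ via $\int\Phi_k\,dm_k=0\to\int\Phi\,dm^{\pm}\neq 0$. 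Without the limit-of-$u$-states step and the periodic-point/atomicity mechanism, the proposed proof does not close, and the uniform bound you target is in any case stronger than what Theorem A asserts (the paper only proves the complement of the zero-exponent set is open, not that exponents are bounded below near every non-uniformly hyperbolic system).
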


Moreover, the same feature of continuity of the center Lyapunov exponents proved in \cite{LMY} can be extended to this context.

\begin{teoC} For every $r\geq 2$, there exists a subset $\mathcal{U}$ which is $C^1$-open and $C^r$-dense in $\mathcal{E}^N_{\omega}(M)$ and such that every $g\in \mathcal{U}$ is a $C^1$-continuity point for the center Lyapunov exponents.  
\end{teoC}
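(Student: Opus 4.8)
The plan is to run the strategy used in \cite{LMY} for the $C^2$-open class $\mathcal B^2_\omega(M)$, feeding in Theorem A of the present paper in place of its $C^2$-open counterpart, and checking at every step that the perturbations we perform stay inside $\mathcal E^N_\omega(M)$. For $C^r$-small perturbations with $r\ge 2$ the constraint $dist_{C^2}(\cdot,id)<N$ is automatic, so care is only needed when we work up to $C^1$-closeness.

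\emph{Reduction.} Since $\dim E^c=2$ and $f$ is symplectic, the two center exponents at a point are $\lambda^c_+(f,x)\ge 0$ and $-\lambda^c_+(f,x)$, so all the center data is captured by the single number $\Phi(f):=\int_M\lambda^c_+(f,x)\,d\mathrm{vol}(x)$, and "$f$ is a $C^1$-continuity point" means $f\mapsto\Phi(f)$ is $C^1$-continuous at $f$. By Kingman's subadditive ergodic theorem $\Phi(f)=\inf_{n\ge1}\frac1n\int_M\log\|Df^n_x|_{E^c_f(x)}\|\,d\mathrm{vol}(x)$; as $f\mapsto E^c_f$ is continuous (dominated splitting) and $f\mapsto Df^n$ is $C^1$-continuous, $\Phi$ is an infimum of $C^1$-continuous functions, hence upper semicontinuous on $\mathcal E^N_\omega(M)$. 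Therefore it is enough to produce a $C^1$-open, $C^r$-dense set of lower-semicontinuity points of $\Phi$. Note that any $f$ with $\Phi(f)=0$ is automatically a continuity point, since $\Phi\ge 0$ and $\Phi$ is u.s.c. Writing $\mathcal H:=\{f\in\mathcal E^N_\omega(M): f\text{ is non-uniformly hyperbolic}\}$, Theorem A says $\mathcal H$ is $C^1$-open; and by the perturbation techniques that also underlie Theorem A (of Baraviera--Bonatti / Shub--Wilkinson type, realizable $C^\infty$-small and hence inside $\mathcal E^N_\omega(M)$), together with accessibility and the pinching/bunching hypotheses, $\mathcal H$ is $C^r$-dense in $\mathcal E^N_\omega(M)$. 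Hence it suffices to find a $C^1$-open $\mathcal U\subseteq\mathcal H$ which consists of continuity points and is $C^r$-dense in $\mathcal H$; such a $\mathcal U$ is then $C^1$-open and $C^r$-dense in $\mathcal E^N_\omega(M)$.

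\emph{Lower semicontinuity on $\mathcal H$.} Fix $f\in\mathcal H$. Because the center exponents of $f$ do not vanish a.e., the Invariance Principle rules out an $s$- and $u$-holonomy-invariant measurable family of probabilities on the fibers of the projectivized center bundle $\mathbb P E^c$; combined with accessibility and the pinching/bunching conditions this produces a measurable $Df$-invariant splitting $E^c=E^c_+\oplus E^c_-$ into Oseledets subspaces with exponents of definite sign. The essential point, exactly as in \cite{LMY}, is that the uniform bound $dist_{C^2}(\cdot,id)<N$ -- equivalently, the uniform estimates already established in the proof of Theorem A -- upgrades this measure-theoretic information to Pliss/Pesin-block estimates whose constants depend only on $N$ and on the partial-hyperbolicity data; then, for $g$ $C^1$-close to $f$, the $g$-orbits inherit the same hyperbolic behaviour along the center on a set of volume arbitrarily close to $1$, which gives $\liminf_{g\to f}\Phi(g)\ge\Phi(f)$, i.e. lower semicontinuity of $\Phi$ at $f$. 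Carrying this out on the $C^1$-open set $\mathcal H$ and taking $\mathcal U$ to be the $C^1$-interior of the set of continuity points inside $\mathcal H$ yields a $C^1$-open $\mathcal U$ of continuity points; its $C^r$-density in $\mathcal H$ follows because the "uniform Pesin block of definite size and density" condition used above is $C^r$-dense in $\mathcal H$ -- one may first perturb, as in \cite{LMY}, to make the invariant volume ergodic and the center exponents essentially constant, and then the block estimate holds.

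\emph{Main obstacle.} The crux is the lower-semicontinuity step. In the purely $C^1$ category it is simply false: Bochi's examples \cite{B2} show that the center exponents can be destroyed by a $C^1$-perturbation among partially hyperbolic symplectic systems. So the whole difficulty is to extract from the $C^2$-bound $N$ the uniform Pesin/Pliss constants that force $\Phi$ to be lower semicontinuous, and -- more subtly -- to arrange this robustly enough that the continuity set has nonempty $C^1$-interior, rather than being merely residual, which is all the Bochi--Viana machinery yields in general. This is exactly where the argument must lean on the uniform estimates behind Theorem A; by comparison the remaining ingredients -- upper semicontinuity, $C^r$-density of $\mathcal H$ through standard perturbations, and the bookkeeping between the $C^1$- and $C^r$-topologies -- are routine.
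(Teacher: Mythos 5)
Your argument breaks down at the step you yourself call the crux. You claim that for $f\in\mathcal H$ (non-uniformly hyperbolic) ``the Invariance Principle rules out an $s$- and $u$-holonomy-invariant measurable family of probabilities on the fibers.'' That is the converse of what the Invariance Principle says: it asserts that \emph{vanishing} of the center exponents forces every $\mathbb{P}(F)$-invariant measure over $\mu$ to be an $su$-state; it gives no obstruction to $su$-states when the exponents are nonzero. Indeed, non-uniformly hyperbolic systems in $\mathcal{E}^N_\omega(M)$ may perfectly well admit $su$-states, and this is exactly why the theorem only claims an open and dense subset of continuity points rather than continuity on all of $\mathcal H$ (which is what your argument, if correct, would prove). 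The paper's route is built precisely around this point: discontinuity points of $L_\omega$ are characterized as those admitting $su$-states (Proposition \ref{graldiscont} and Corollary \ref{discont}, whose proof rests on the limit-of-$u$-states result, Corollary \ref{limitu}, obtained via upper semicontinuity of partial entropies and the Tahzibi--Yang criterion); absence of $su$-states (``bundle-free'') is shown to be a $C^1$-open condition, again via Corollary \ref{limitu}; and Theorem \ref{interior}, using the perturbation scheme of \cite{M} at a pinching hyperbolic periodic point, shows discontinuity points can be $C^r$-approximated by bundle-free ones. The set $\mathcal U$ is then the bundle-free locus. None of this structure is present in your proposal, and it cannot be replaced by the asserted implication ``NUH $\Rightarrow$ no invariant fiber structure.''

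Two further gaps compound this. First, the claimed lower semicontinuity of $\Phi$ at points of $\mathcal H$ via ``Pliss/Pesin-block estimates with constants depending only on $N$'' is not an argument: a $C^2$ bound on $f$ does not, by itself, let Pesin blocks of $f$ control the center exponents of a $C^1$-perturbation $g$ (Bochi-type perturbations act exactly at the level where such block estimates lose control of $g$'s orbits); in the paper the $C^2$ bound enters quite differently, through continuity of the stable/unstable holonomies of the center cocycle (Proposition \ref{continuity}) and through the Ledrappier--Young formula giving convergence of the partial entropies (Equation (\ref{Parte2})), which feed the $u$-state limit argument. Second, even granting continuity on some subset of $\mathcal H$, taking ``the $C^1$-interior of the set of continuity points'' gives no density: interiors of continuity sets need not be dense, and your density claim for it is circular. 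What makes the paper's proof work is the identification of a $C^1$-\emph{open sufficient condition} for continuity (bundle-free) together with a genuine $C^r$-density statement for that condition (Theorem \ref{interior}); your proposal supplies neither.
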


A classical result of Furstenberg provides a relation between the center Lyapunov exponents of $f$ and the invariant measures of the derivative cocycle $\mathbb{P}(F)$ where $F=Df\vert E^c$. Therefore, the study of the center Lyapunov exponents and the study of $\mathbb{P}(F)$-invariant measures is intermingled. Among all the $\mathbb{P}(F)$-invariant measures we are more interested in the ones that satisfies certain properties called $s$-state and $u$-state. (We refer the reader to next section for precise definitions).

One of the key results that allow us to conclude Theorem A and Theorem B is Corollary \ref{limitu}. Roughly speaking, it states that the limit of $u$-states is a $u$-state. This result is typical in the theory of linear cocycles and recently has been extended to the context of derivative cocycles in \cite{OP}. 

We provide a different proof of Corollary \ref{limitu} using a criterion for $u$-states introduced by Tahzibi and Yang in \cite{TY}. This criterion establishes that a measure is a $u$-state if, and only if, some inequality involving the entropies along the expanding foliations of $f$ and $\mathbb{P}(F)$ is in fact an equality. In order to conclude the result we are going to prove that the partial entropy of $\mathbb{P}(F)$ varies upper semi-continuously. This has been proved for $C^1$ diffeomorphism by Yang in \cite{Y}. We could have applied this theorem, if the cocycle $\mathbb{P}(F)$ was also $C^1$. However, the derivative cocycle is only H\"older continuous and we need to adjust the techniques to our context. 

This new approach to study $u$-states allow us to conclude the following result. Let $\mu$ denote the Lebesgue measure associated to $\omega$.  

\begin{nuevo} Let $f\in \mathcal{E}^N_{\omega}(M)$ be a non-uniform hyperbolic diffeomorphism that does not admit $su$-states. Then, for every sequence of ergodic measures $\nu_k$ such that $\nu_k\to \mu$ in the weak$^{*}$ topology and for $i=s$ or $i=u$ verifies
\begin{equation*}
\lim_{k\to \infty} h_{\nu_k}(f,W^i)=h_{\mu}(f,W^i),
\end{equation*}
there exists $K\in \mathbb{N}$ such that $\nu_k$ is a hyperbolic measures of $f$ for every $k\geq K$. 
Moreover, the center Lyapunov exponents varies continuously with $\nu_k$. 
\end{nuevo}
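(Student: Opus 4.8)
The plan is to reduce the statement to a lower bound for the top center exponent and to prove that bound by running suitable $u$-states of $\mathbb{P}(F)$ through the two semicontinuity ingredients announced in the introduction. Throughout set $\phi(x,[v])=\log\|Df(x)|_{E^c_x}\hat v\|$, a continuous function on the compact space $\mathbb{P}E^c$ (continuous since $E^c$ and $Df$ are). Since $f$ is symplectic with two-dimensional center, $Df|_{E^c}$ takes values in $\mathrm{Sp}(2,\mathbb{R})=\mathrm{SL}(2,\mathbb{R})$, hence $\lambda^c_2(\nu)=-\lambda^c_1(\nu)$ for every invariant $\nu$; as the exponents along $E^{ss}$ and $E^{uu}$ are uniformly bounded away from $0$, an invariant measure is hyperbolic for $f$ exactly when its top center exponent is positive. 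By the relation of Furstenberg recalled above, $\lambda^c_1(\nu)=\max\{\int\phi\,dm:\ m\in\mathcal{M}_{\mathbb{P}(F)}(\nu)\}$ for ergodic $\nu$, where $\mathcal{M}_{\mathbb{P}(F)}(\nu)$ is the nonempty compact set of $\mathbb{P}(F)$-invariant probabilities projecting to $\nu$. I treat the case $i=u$; the case $i=s$ is obtained by applying the argument to $f^{-1}$, which inherits all the structure used, together with $h_\nu(f,W^s)=h_\nu(f^{-1},W^u)$. Since $f$ is non-uniformly hyperbolic, $\mu$ is hyperbolic and $\lambda^c_1(\mu)>0$; moreover $\mu$ is ergodic, by accessibility and the bunching conditions defining $\mathcal{E}^N_\omega(M)$. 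Finally $\nu\mapsto\lambda^c_1(\nu)$ is upper semicontinuous for the weak$^*$ topology, being the infimum over $n$ of the weak$^*$-continuous functionals $\nu\mapsto\frac1n\int\log\|Df^n|_{E^c}\|\,d\nu$, so $\limsup_k\lambda^c_1(\nu_k)\le\lambda^c_1(\mu)$. Everything therefore reduces to $\liminf_k\lambda^c_1(\nu_k)\ge\lambda^c_1(\mu)$, which in particular forces $\nu_k$ to be hyperbolic for large $k$.

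For each $k$ I would choose a $u$-state $m_k\in\mathcal{M}_{\mathbb{P}(F)}(\nu_k)$ of the fiber-bunched cocycle $\mathbb{P}(F)$; then $\lambda^c_1(\nu_k)\ge\int\phi\,dm_k$. Passing to a subsequence, $m_k\to m$ weak$^*$ with $m\in\mathcal{M}_{\mathbb{P}(F)}(\mu)$ (because $\nu_k\to\mu$), and $\int\phi\,dm_k\to\int\phi\,dm$ by continuity of $\phi$. The crucial point is that $m$ is again a $u$-state, and I would prove this along the lines of Corollary \ref{limitu}, with the entropy hypothesis playing the role that automatic convergence of partial entropies plays there (where the limiting measure is the volume, whose conditionals along unstables are absolutely continuous). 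By the criterion of Tahzibi and Yang \cite{TY}, a measure $m'\in\mathcal{M}_{\mathbb{P}(F)}(\nu')$ is a $u$-state exactly when the always-valid inequality $h_{m'}(\mathbb{P}(F),W^u)\le h_{\nu'}(f,W^u)+\mathcal{F}(m')$ is an equality, where $\mathcal{F}$ is a weak$^*$-continuous functional recording the positive part of the fiber expansion of $\mathbb{P}(F)$ along its expanding direction. Applying this to $m_k$ and letting $k\to\infty$ along the subsequence: the right-hand side tends to $h_\mu(f,W^u)+\mathcal{F}(m)$, using $h_{\nu_k}(f,W^u)\to h_\mu(f,W^u)$ (the hypothesis) and the continuity of $\mathcal{F}$; while $\limsup_k h_{m_k}(\mathbb{P}(F),W^u)\le h_m(\mathbb{P}(F),W^u)$ by the upper semicontinuity of the partial entropy of $\mathbb{P}(F)$ established in this paper. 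Hence $h_m(\mathbb{P}(F),W^u)\ge h_\mu(f,W^u)+\mathcal{F}(m)$; combined with the reverse inequality, which always holds, $m$ is a $u$-state.

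It remains to identify $\int\phi\,dm$. Since $\mu$ is ergodic with $\lambda^c_1(\mu)>\lambda^c_2(\mu)$, Oseledets' theorem supplies an invariant measurable splitting $E^c=E^1\oplus E^2$, and, classically, every element of $\mathcal{M}_{\mathbb{P}(F)}(\mu)$ is a convex combination $\alpha\,\delta_{E^1}+(1-\alpha)\,\delta_{E^2}$ of the two Dirac sections. If $\alpha<1$, comparing the disintegrations of the $u$-state $m$ along $W^u$ at holonomy-related leaves forces the unstable holonomies of $\mathbb{P}(F)$ to preserve the section $E^2$, so $\delta_{E^2}$ is a $u$-state; being always an $s$-state, $\delta_{E^2}$ would then be an $su$-state of $f$, contrary to hypothesis. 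Hence $m=\delta_{E^1}$ and $\int\phi\,dm=\lambda^c_1(\mu)$. Therefore $\lambda^c_1(\nu_k)\ge\int\phi\,dm_k\to\lambda^c_1(\mu)$ along the subsequence, and since the same reasoning runs along every subsequence and always ends at $\delta_{E^1}$, we obtain $\liminf_k\lambda^c_1(\nu_k)\ge\lambda^c_1(\mu)$. With the upper bound this gives $\lambda^c_1(\nu_k)\to\lambda^c_1(\mu)$, hence $\lambda^c_2(\nu_k)=-\lambda^c_1(\nu_k)\to\lambda^c_2(\mu)$, i.e.\ the center Lyapunov exponents vary continuously along $\nu_k$; and $\lambda^c_1(\mu)>0$ produces $K$ with $\nu_k$ hyperbolic for all $k\ge K$.

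The main obstacle is precisely the claim that the weak$^*$ limit $m$ of the $u$-states $m_k$ is again a $u$-state, pushing this property through a limit in which the base measure itself varies. This is exactly where the hypothesis $h_{\nu_k}(f,W^i)\to h_\mu(f,W^i)$ is indispensable: without it the partial entropy $h_{\nu_k}(f,W^u)$ may drop in the limit and the $u$-state property be lost, which is the mechanism behind Bochi's $C^1$ phenomenon. The claim rests on two nonroutine ingredients, both flagged in the introduction: the entropy characterization of $u$-states from \cite{TY}, and, more delicately, the upper semicontinuity of the partial entropy of the merely \emph{H\"older} cocycle $\mathbb{P}(F)$, which forces one to rework Yang's $C^1$ theorem \cite{Y} in the regularity actually available.
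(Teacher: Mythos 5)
Your proposal is correct and runs on essentially the same engine as the paper's proof: the decisive step in both is that a weak$^*$ limit of $u$-states over the $\nu_k$ is again a $u$-state, obtained from the Tahzibi--Yang criterion (Theorem \ref{criterium}) together with the upper semicontinuity of the partial entropy of the H\"older cocycle $\mathbb{P}(F)$ (Propositions \ref{limitsup}, \ref{limitugral} and Corollary \ref{limitu}), combined with the $m^{+}/m^{-}$ structure of Proposition \ref{nozero} and the no-$su$-state hypothesis to discard the bad component. The only differences are organizational and cosmetic: you prove the $\liminf$ bound directly by identifying the limit $u$-state with the Dirac mass on the top Oseledets direction and use the symplectic symmetry $\lambda^c_2(\nu)=-\lambda^c_1(\nu)$ (equivalently, continuity of $\nu\mapsto\int\log|\det Df|E^c|\,d\nu$) to handle the second exponent, whereas the paper argues by contradiction through the characterization of discontinuity points (Proposition \ref{graldiscont}) with Furstenberg-maximizing measures, a route that also delivers the more general Theorem D.
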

%More precisely, let $f$ be a partially hyperbolic diffeomorphism and $f_k\to f$ in the $C^1$ topology. We consider the derivative cocycles defined by $F_k=Df_k\vert E^c$ and $F=Df\vert E^c$. If $m_k$ is a sequence of $\mathbb{P}(F_k)$-invariant probability measures such that every $m_k$ is a $u$-state, then any accumulation point $m=\lim m_k$ is a $u$-state for $\mathbb{P}(F)$.  

This theorem generalizes Theorem B of \cite{VY}, where it is proven that, for most partially hyperbolic diffeomorphism having 1-dimensional center bundle and center foliation that forms a circle bundle, ergodic measures with large entropy are hyperbolic. Observe that the hypothesis of large entropy implies the continuity of the partial entropy by \cite{LY2}.  

In \cite{BBD} it is proved the abundance of ergodic measures with vanishing center Lyapunov exponents. Comparing this result with Theorem C, we conclude that the collapse of partial entropy should be a mechanism for non-hyperbolic measures approaching hyperbolic measures. Recall that in \cite{Y} has been prove that the partial entropy varies upper-semicontinuously. 

With some small changes we are able to extend Theorem A and Theorem B to the context of partially hyperbolic volume-preserving diffeomorphisms. One of the main tools is Theorem E of \cite{ASV} which gives information only for measures in the Lebesgue class. Because of that, we are not able to prove similar versions of Theorem A and Theorem B in other settings. The situation is different for Theorem C, we demonstrate a more general result that holds for any ergodic hyperbolic measure.

\section{Preliminaries and Statements}

Let $M$ be a compact manifold and $f\colon M\to M$ a diffeomorphism. We say $f$ is \textit{partially hyperbolic} if there exist a nontrivial splitting of the tangent bundle $$TM=E^{s}\oplus E^{c}\oplus E^{u}$$ invariant under the derivative map $Df$, a Riemannian metric $\left\| \cdot \right\|$ on $M$, and positive continuous functions $\chi$, $\widehat{\chi}$, $\upsilon$, $\widehat{\upsilon}$, $\gamma$, $\widehat{\gamma}$ with 
$$\chi< \upsilon < 1 < \widehat{\upsilon}^{-1} < \widehat{\chi}^{-1} \quad  \text{and} \quad \upsilon< \gamma < \widehat{\gamma}^{-1}< \widehat{\upsilon}^{-1},$$ such that for any unit vector $v\in T_{p}M$, 
\begin{equation}\label{ph}
\begin{aligned}
\chi(p)< &\left\| Df_{p}(v) \right\|< \upsilon(p) \quad \quad \text{if} \; v\in E^{s}(p), \\
\gamma(p) < &\left\| Df_{p}(v) \right\|< \widehat{\gamma}(p)^{-1} \quad \text{if} \; v\in E^{c}(p),\\
\widehat{\upsilon}(p)^{-1}< &\left\| Df_{p}(v) \right\| < \widehat{\chi}(p)^{-1} \quad \text{if} \; v\in E^{u}(p).
\end{aligned}
\end{equation}

In the following, we mention some important consequences of partial hyperbolicity which are going to be used in this paper. We refer the reader to \cite{BDV,HPS,Sh} for more information. 
 
For every partially hyperbolic diffeomorphism the stable and unstable bundles $E^{s}$ and $E^{u}$ are uniquely integrable and their integral manifolds form two transverse (continuous) foliations $W^{s}$ and $W^{u}$, whose leaves are immersed submanifolds of the same class of differentiability as $f$. These foliations are called the \textit{strong-stable} and \textit{strong-unstable} foliations. They are invariant under $f$, in the sense that
$$f(W^{s}(x))=W^{s}(f(x)) \qquad \text{and}\qquad f(W^{u}(x))=W^{u}(f(x)),$$ where $W^{s}(x)$ and $W^{u}(x)$ denote the leaves of $W^{s}$ and $W^{u}$, respectively, passing through any $x\in M$. 

Given two points $x,y\in M$, we say $x$ is \textit{accessible} from $y$ if there exists a path that connects $x$ to $y$, which is a concatenation of finitely many subpaths, each of which lies entirely in a single leaf of $W^u$ or a single leaf of $W^s$. We call this type of path, an \textit{su-path}. 
%More precisely, there exist finitely many points $z_i$, $1\leq i\leq n$, such that $z_0=x$, $z_n=y$ and $z_i\in W^{*}(z_{i+1})$ for every $i\in \{0,...,n-1\}$ with $*\in \{s,u\}$. The points $z_i$ are called the \emph{nodes} of the $su$-path. We are going to use the following notation: for an $su$-path $\gamma$, we write $\gamma=[z_0,z_1,...,z_n]$. 

The relation defined by $x\sim y$ if and only if $x$ is accessible from $y$ is an equivalence relation and we say that $f$ is \textit{accessible} if $M$ is the unique accessibility class. 

Although partial hyperbolicity is a $C^1$-open condition, that is, any diffeomorphism sufficiently $C^1$-close to a partially hyperbolic diffeomorphism is itself partially hyperbolic, it is not known if accessibility is a $C^1$-open condition. However, by the results in \cite{AV2}, this is true for the case of partially hyperbolic diffeomorphisms with 2-dimensional center bundle. 

\begin{defn}[$\alpha$-pinched]\label{holder} Let $f$ be a partially hyperbolic diffeomorphism and $\alpha>0$. We say that $f$ is $\alpha$-pinched if the functions in Equation (\ref{ph}) satisfy,  
\begin{equation*}
\begin{aligned}
\upsilon &< \gamma\, \chi^{\alpha} \quad \text{and} \quad \; \widehat{\upsilon} < \widehat{\gamma}\, \widehat{\chi}^{\alpha}, \\
\upsilon &< \gamma\, \widehat{\chi}^{\alpha} \quad \text{and} \quad \; \widehat{\upsilon} < \widehat{\gamma}\, \chi^{\alpha}. 
\end{aligned}
\end{equation*} 
\end{defn}

\begin{defn}[$\alpha$-bunched]\label{bunched} Let $f$ be a partially hyperbolic diffeomorphism and $\alpha>0$. We say that $f$ is $\alpha$-bunched if the functions in Equation (\ref{ph}) satisfy, 
$$\upsilon^{\alpha} < \gamma \widehat{\gamma} \qquad \text{and} \qquad \widehat{\upsilon}^{\alpha}< \gamma \widehat{\gamma}.$$ 
\end{defn}

Notice that both conditions, $\alpha$-pinched and $\alpha$-bunched, are $C^1$-open. Moreover, every partially hyperbolic system is $\alpha$-pinched for some $\alpha>0$ and if $f$ is also $C^2$, then $E^c$ is $\alpha$-H\"older. See Section 4 of \cite{PSW2}.

In the following, we give the precise definitions for the subsets of partially hyperbolic diffeomorphisms where the theorems holds. 

Let $\mathit{PH}^r(M)$ denotes the set of $C^r$ partially hyperbolic diffeomorphisms on $M$.

\begin{defn}\label{basico} Fix $N>0$. $\mathcal{B}^N(M)$ denotes the subset of $\mathit{PH}^2(M)$ where $f\in \mathcal{B}^N(M)$ if $f$ is $\alpha$-pinched and $\alpha$-bunched for some $\alpha>0$, its center bundle is 2-dimensional and $dist_{C^2}(f, id)<N.$
\end{defn}

Let $M$ be a symplectic manifold and $\omega$ denote its symplectic form, then $\mathit{PH}^r_{\omega}(M)$ denotes the set of $C^r$ partially hyperbolic diffeomorphisms preserving $\omega$.

\begin{defn}\label{symple} Fix $N>0$. $\mathcal{E}^N_{\omega}(M)$ denotes the subset of $\mathit{PH}^2_{\omega}(M)\cap \mathcal{B}^N(M)$ where $f\in \mathcal{E}^N_{\omega}(M)$ if $f$ is accessible. 
\end{defn}

By Theorem A in \cite{SW}, if $M=\mathbb{T}^{2d}$ with $d\geq 2$, then $\mathcal{E}^N_{\omega}(M)$ is non-empty. 

Let $\mu$ denote a probability measure in the Lebesgue class and $\mathit{PH}^r_{\mu}(M)$ denote the set of $C^r$ partially hyperbolic diffeomorphisms preserving $\mu$. In order to extend the main theorems to the volume-preserving setting, we need to ask for extra hypotheses.

\begin{defn}\label{pinch} Let $f$ be a partially hyperbolic diffeomorphism with $\dim E^c=2$ and $p$ a periodic point of $f$ with $n_p=per(p)$. We say that $p$ is a \emph{pinching hyperbolic periodic point} if the eigenvalues of $Df^{n_p}_p|E^c(p)$ have different norms and both norms are different from one.
\end{defn}

\begin{defn}\label{volu} Fix $N>0$. $\mathcal{E}^N_{\mu}(M)$ denotes the subset of $\mathit{PH}^2_{\mu}(M)\cap \mathcal{B}^N(M)$ where $f\in \mathcal{E}^N_{\mu}(M)$ if $f$ is accessible and has a pinching hyperbolic periodic point. 
\end{defn}

From now on, $\mu$ will denote a probability measure in the Lebesgue class and $\omega$ a symplectic form. We will use the notation $\mathcal{E}^N_{*}(M)$ with $*\in \{\mu, \omega\}$ to refer to both sets. If we are in the symplectic context, $\mu$ always denotes the Lebesgue measure associated to $\omega$.

The notion of $\alpha$-bunched defined above implies that the diffeomorphism is center bunched in the sense of Theorem 0.1 of \cite{BW}. Therefore, every diffeomorphism in $\mathcal{E}^N_{*}(M)$ is ergodic. 

Observe that for every $f\in \mathcal{E}^N_{*}(M)$ there exists a $C^1$ neighborhood, $\mathcal{W}(f)$, such that every $f\in \mathcal{W}(f)$ is partially hyperbolic with 2-dimensional center bundle, $\alpha$-pinched, $\alpha$-bunched and accessible. This last property is a consequence of the results mentioned before for partially hyperbolic diffeomorphisms with 2-dimensional center bundle. 

The Theorem of Oseledets states that for every $C^1$ diffeomorphism $f$ and every $f$-invariant probability measure $\nu$, there exists a $\nu$-full measure set $\widehat{M}$ such that for every $x\in \widehat{M}$, there exist $k(x)\in \mathbb{N}$, real numbers $\widehat{\lambda}_1(f,x)> \cdots > \widehat{\lambda}_{k(x)}(f,x)$ and a splitting $T_{x}M=E^{1}_x\oplus \cdots \oplus E^{k(x)}_x$ of the tangent bundle at $x$, all depending measurably on the point, such that
$$\lim\limits_{n\rightarrow \pm \infty} \frac{1}{n} \text{log} \left\|Df^{n}_x(v)\right\|= \widehat{\lambda}_j(f,x) \quad \text{for all} \; v\in E^j_x \setminus \{0\}. $$
The real numbers $\widehat{\lambda}_j(f,x)$ are the \emph{Lyapunov exponents} of $f$ in the point $x$. Moreover, if $(f, \nu)$ is ergodic, then the functions $k(x)$ and $\widehat{\lambda}_j(f,x)$ are constants almost everywhere. 

\begin{defn} We say that $\nu$ is \emph{hyperbolic} if the set of points with non-zero Lyapunov exponents has full $\nu$-measure. Let $\mu$ be in the Lebesgue class, if $\mu$ is hyperbolic, then we say that the diffeomorphism $f$ is \emph{non-uniformly hyperbolic}. 
\end{defn} 
%Denote $\lambda_1(f,x)\geq \lambda_2(f,x)\geq \cdots \geq \lambda_d(f,x)$ the numbers $\widehat{\lambda}_j(f,x)$, each repeated with multiplicity $\dim\, E^j_x$ and written in non-increasing order. 

If $f$ is a partially hyperbolic diffeomorphism, then we call \emph{center Lyapunov exponents} of $f$, the Lyapunov exponents associated to $E^c$. If $\dim E^c=2$ and $\nu$ is ergodic, we are going to denote them by $\lambda^c_1(f,\nu)$ and $\lambda^c_2(f,\nu)$. Observe that it is possible that $\lambda^c_1(f,\nu)=\lambda^c_2(f,\nu)$. In this case, the measure $\nu$ is hyperbolic if the center Lyapunov exponents are different form zero. 

%Moreover, a partially hyperbolic diffeomorphism is non-uniformly hyperbolic if its center Lyapunov exponents are non-zero for almost every point. 

By Lemma 2.5 of \cite{XZ}, if $f$ is a symplectic partially hyperbolic diffeomorphism, we have $$\int \log \left|\det(Df_x\vert E^c_x)\right|d\mu=0.$$ Therefore, if $f$ is also ergodic, then $\lambda^c_1(f, \mu)+\lambda^c_2(f,\mu)=0$. In this case, in order to prove that $f$ is non-uniformly hyperbolic, it is enough to conclude that the center Lyapunov exponents are different. This is the main advantage of working in the symplectic setting. 

We are now ready to give the precise statement of the results. 

\begin{teoA} For every $N>0$, the subset of non-uniformly hyperbolic diffeomorphisms in $\mathcal{E}^N_{\omega}(M)$ is $C^1$-open.
\end{teoA}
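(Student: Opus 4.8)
The plan is to argue by contradiction using the $u$-state machinery announced in the introduction. Suppose $f \in \mathcal{E}^N_\omega(M)$ is non-uniformly hyperbolic but there is a sequence $f_k \to f$ in the $C^1$ topology with each $f_k \in \mathcal{E}^N_\omega(M)$ not non-uniformly hyperbolic. Since each $f_k$ is ergodic (center bunching), non-uniform hyperbolicity fails exactly when $\lambda^c_1(f_k,\mu_k) = \lambda^c_2(f_k,\mu_k) = 0$, where $\mu_k$ is the Lebesgue measure associated to $\omega$ (here I use that the symplectic condition forces $\lambda^c_1 + \lambda^c_2 = 0$). Lifting to the projectivized center cocycle $\mathbb{P}(F_k)$ with $F_k = Df_k|E^c$, vanishing of both center exponents means that \emph{every} $\mathbb{P}(F_k)$-invariant probability $m_k$ projecting to $\mu_k$ has zero fiber exponent, and in particular $f_k$ admits an $su$-state (an invariant measure for $\mathbb{P}(F_k)$ that is simultaneously an $s$-state and a $u$-state). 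So the first step is: reduce the failure of non-uniform hyperbolicity to the existence of $su$-states for the $f_k$'s.

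Next I would pass to the limit. Choose such $su$-states $m_k$ for $\mathbb{P}(F_k)$; after extracting a subsequence, $m_k$ converges in the weak$^*$ topology to some probability $m$ on the projectivized center bundle (one has to be a little careful that the bundle itself varies with $k$, but since $E^c_{f_k} \to E^c_f$ continuously one can work in an ambient Grassmannian bundle and identify the limit with an $\mathbb{P}(F)$-invariant measure). The measure $m$ projects to $\mu$ since $\mu_k = \mu$. The crucial claim is that $m$ is again an $su$-state for $\mathbb{P}(F)$: this is exactly where Corollary~\ref{limitu} (the limit of $u$-states is a $u$-state) is invoked, applied to both the $u$-foliation and, by considering $f^{-1}$, the $s$-foliation. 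The upper semicontinuity of the partial entropy $h(\mathbb{P}(F), W^u)$ along the expanding foliation — the technical heart behind Corollary~\ref{limitu}, adapted from Yang~\cite{Y} to the merely Hölder cocycle $\mathbb{P}(F)$ using the Tahzibi--Yang criterion \cite{TY} — is what makes the equality defining a $u$-state survive the limit.

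Then I would invoke the rigidity available in the class $\mathcal{E}^N_\omega(M)$. Since $f$ is accessible, $\alpha$-pinched and $\alpha$-bunched, an invariant $su$-state for $\mathbb{P}(F)$ is very restrictive: by the invariance-principle results of Avila--Santamaria--Viana type (Theorem E of \cite{ASV}, cited in the introduction for the volume-preserving extension, and the accessibility argument of \cite{AV2}), such a measure must be $u$- and $s$-invariant in a strong holonomy sense, hence forces either a $Df$-invariant measurable (in fact continuous, by pinching/bunching) sub-bundle of $E^c$ or a continuous conformal structure on $E^c$. Either alternative propagates along $su$-paths; by accessibility it then holds on all of $M$, and combined with the symplectic normalization $\lambda^c_1 + \lambda^c_2 = 0$ this yields $\lambda^c_1(f,\mu) = \lambda^c_2(f,\mu) = 0$ — contradicting that $f$ is non-uniformly hyperbolic. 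This contradiction proves the theorem.

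The main obstacle I expect is the second step: establishing that the limit $m$ of the $su$-states $m_k$ is itself an $su$-state, because $\mathbb{P}(F_k)$ is only Hölder (not $C^1$) and, worse, the center bundles and therefore the phase spaces of the cocycles move with $k$. One must set up the partial-entropy estimates uniformly in $k$ — the bound $dist_{C^2}(f_k,\mathrm{id}) < N$ and the uniform $\alpha$-pinching/$\alpha$-bunching are precisely what provide uniform Hölder constants for $E^c_{f_k}$ and uniform control of the disintegration of $m_k$ along unstable plaques — and then show these pass to the limit. This is exactly the point where the hypothesis ``$C^2$-bounded distance to the identity'' is used in an essential way, matching the introduction's assertion that Bochi's phenomenon requires an explosion of the $C^2$ norm.
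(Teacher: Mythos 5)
Your first two steps run parallel to the paper's strategy (in the symplectic class, failure of non-uniform hyperbolicity for $f_k$ means both center exponents vanish, so by the Invariance Principle every $\mathbb{P}(F_k)$-invariant measure over $\mu$ is an $su$-state; weak$^*$ limits of such measures are again $su$-states by Corollary \ref{limitu} and its analogue for $f^{-1}$). The gap is in your final step: you claim that an $su$-state for $\mathbb{P}(F)$ over $\mu$, together with accessibility, pinching/bunching and $\lambda^c_1+\lambda^c_2=0$, forces $\lambda^c_1(f,\mu)=\lambda^c_2(f,\mu)=0$. That implication is not available and is false as stated. An $su$-state is perfectly compatible with nonzero exponents: in your own dichotomy, the alternative ``continuous $Df$-invariant line sub-bundle of $E^c$'' does not force equality of the exponents (delta masses along such a sub-bundle give an $su$-state whose fiber integral is a possibly nonzero exponent); only the conformal-structure alternative would. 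Indeed, the paper's own Corollary \ref{discont} describes (hypothetical) discontinuity points of $L_\omega$, which necessarily have \emph{distinct} center exponents, all of whose invariant measures over $\mu$ are $su$-states, and Theorem B only produces an open and dense set of bundle-free maps, not all non-uniformly hyperbolic ones. Concretely, the weak$^*$ limit $m$ of your $su$-states $m_k$ could simply be $\tfrac12(m^{+}+m^{-})$ in the notation of Proposition \ref{nozero}: this is an $su$-state with $\int\Phi\,dm=0$, consistent with everything you have established, so no contradiction with the hyperbolicity of $f$ follows.

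The paper closes the argument by a different mechanism, which your proposal is missing. Since $f$ is non-uniformly hyperbolic and symplectic, Katok's theorem \cite{K} gives a pinching hyperbolic periodic point, inherited by $f_k$ for large $k$. The Invariance Principle (Theorem B of \cite{ASV}) gives, for each ergodic $\mathbb{P}(F_k)$-invariant $m_k$ over $\mu$, a disintegration $x\mapsto m_{k,x}$ that is \emph{continuous} and holonomy-invariant; the periodic point forces $m_{k,x}$ to be atomic with at most two atoms. Two atoms would split $m_k$ into two invariant measures, contradicting ergodicity, so along a subsequence there is a single atom, and the continuity of holonomies as functions of the diffeomorphism (Proposition \ref{continuity}) forces the resulting continuous section to shadow $\operatorname{supp}m^{+}$ or $\operatorname{supp}m^{-}$; hence $m_{k_j}\to m^{+}$ or $m^{-}$, and then $0=\int\Phi_{k_j}\,dm_{k_j}\to\int\Phi\,dm^{\pm}=\lambda_{\pm}(F')\neq 0$ gives the contradiction (this is the argument of Theorem 6.1 of \cite{LMY}, outlined in Section 7.1). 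So the essential missing ideas in your proposal are the use of the pinching hyperbolic periodic point together with the continuity of the disintegrations and of the holonomies to rule out the balanced mixture, and the exponent comparison via $\int\Phi_k\,dm_k$; the rigidity you invoke for a single $su$-state of $f$ cannot, by itself, contradict non-uniform hyperbolicity.
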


\begin{teoB} For every $N>0$, the set formed by diffeomorphisms having different center Lyapunov exponents is $C^1$-open in $\mathcal{E}^N_{\mu}(M)$.
\end{teoB}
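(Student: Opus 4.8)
The plan is to argue by contradiction, reducing the statement to the stability of $u$-states under limits (Corollary \ref{limitu}) and then to the Invariance Principle. Assume $f\in\mathcal{E}^N_\mu(M)$ satisfies $\lambda^c_1(f,\mu)\neq\lambda^c_2(f,\mu)$ but that some sequence $g_k\to f$ in the $C^1$ topology, with $g_k\in\mathcal{E}^N_\mu(M)$, has $\lambda^c_1(g_k,\mu)=\lambda^c_2(g_k,\mu)$ for every $k$. Discarding finitely many terms, all the $g_k$ lie in $\mathcal{W}(f)$, hence are partially hyperbolic, $\alpha$-pinched, $\alpha$-bunched and accessible with constants independent of $k$; and since $dist_{C^2}(g_k,id)<N$, the center bundles $E^c_{g_k}$ are uniformly $\alpha$-H\"older and the stable and unstable holonomies of the projective cocycles $\mathbb{P}(F_k)$, where $F_k=Dg_k|E^c_{g_k}$, have uniformly bounded H\"older norms. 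After fixing a continuous family of linear identifications $E^c_{g_k}\cong E^c_f$ --- available because $g_k\to f$ in $C^1$ --- I will regard every $\mathbb{P}(F_k)$, as well as $\mathbb{P}(F_f)$, as acting on the single projective bundle $\mathbb{P}(E^c_f)$.

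First I would use the equality $\lambda^c_1(g_k,\mu)=\lambda^c_2(g_k,\mu)$. It makes the extremal fiber Lyapunov exponents of $\mathbb{P}(F_k)$, which equal $\pm(\lambda^c_1(g_k,\mu)-\lambda^c_2(g_k,\mu))$, vanish, so every $\mathbb{P}(F_k)$-invariant measure projecting to $\mu$ has zero fiber exponent. Picking a $u$-state $m_k$ of $\mathbb{P}(F_k)$ projecting to $\mu$ --- the set of such measures being nonempty and weak$^{*}$-compact --- the Invariance Principle, whose fiber-bunching requirement is supplied by the $\alpha$-pinching and $\alpha$-bunching, then forces $m_k$ to be an $s$-state too, so that each $g_k$ admits an $su$-state. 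Since the constants in \cite{ASV} involve only the uniform data gathered above, this step is uniform in $k$; this is the first essential use of the hypothesis $dist_{C^2}(\cdot,id)<N$.

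Next I would pass to a subsequential weak$^{*}$ limit $m_k\to m$: this $m$ is $\mathbb{P}(F_f)$-invariant and projects to $\mu$, and I claim it is again a $u$-state of $\mathbb{P}(F_f)$, which is exactly the content of Corollary \ref{limitu}. That corollary is proved through the criterion of \cite{TY} --- a measure $\bar\nu$ projecting to $\mu$ is a $u$-state if and only if its lifted partial entropy satisfies $h_{\bar\nu}(\mathbb{P}(F),W^u)=h_\mu(f,W^u)$ --- combined with the upper semicontinuity of this partial entropy under simultaneous $C^1$-perturbation of the diffeomorphism and weak$^{*}$-perturbation of the measure. Applying the analogous statement to $g_k^{-1}$ and the stable foliations shows that $m$ is an $s$-state as well, so $f$ admits an $su$-state projecting to $\mu$. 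Finally, since $f$ is accessible and, being in $\mathcal{E}^N_\mu(M)$, has a pinching hyperbolic periodic point $p$, Theorem E of \cite{ASV} applies to this $su$-state: its disintegration is reproduced by both holonomies and is therefore, by accessibility, determined by its value at $p$, where the pinching condition rules out the exceptional configuration associated with a proper $Df$-invariant subbundle of $E^c_f$. Hence $Df|E^c_f$ preserves a continuous conformal structure, forcing $\lambda^c_1(f,\mu)=\lambda^c_2(f,\mu)$ --- contradicting the choice of $f$ --- so the set of diffeomorphisms with distinct center Lyapunov exponents is $C^1$-open in $\mathcal{E}^N_\mu(M)$.

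The main obstacle is the step resting on Corollary \ref{limitu}: proving the upper semicontinuity of the partial entropy of $\mathbb{P}(F)$ with constants uniform over $\mathcal{E}^N_\mu(M)$. For $C^1$ diffeomorphisms this semicontinuity is Yang's theorem \cite{Y}, but the derivative cocycle $\mathbb{P}(F)$ is only H\"older, so I expect to have to reprove \cite{Y} directly, replacing the missing $C^1$-regularity of the cocycle by the uniform H\"older estimates that the bound $dist_{C^2}(\cdot,id)<N$ guarantees --- the very bound whose breakdown, by \cite{B2}, is what permits the center Lyapunov exponents to collapse under $C^1$-perturbations.
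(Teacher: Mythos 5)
Your first two steps (vanishing fiber exponents for the perturbed cocycles, the Invariance Principle producing $su$-states for each $g_k$, and the passage to a weak$^{*}$ limit via Corollary \ref{limitu} and its analogue for $g_k^{-1}$) follow the spirit of the paper's Sections 5--7, and you correctly identify the entropy-semicontinuity issue behind Corollary \ref{limitu}. The gap is in your final step. The existence of an $su$-state for $f$ projecting to $\mu$ is \emph{not} in contradiction with $\lambda^c_1(f,\mu)\neq\lambda^c_2(f,\mu)$: at the pinching hyperbolic periodic point $p$, the only $\mathbb{P}(Df^{n_p}_p\vert E^c_p)$-invariant probability measures are convex combinations of the Dirac masses at the two eigendirections, so by accessibility and Theorem E of \cite{ASV} the disintegration of any $su$-state is atomic with at most two atoms at \emph{every} point, i.e.\ it is carried by one or two continuous sections invariant under $\mathbb{P}(F)$ and both holonomies --- exactly the configuration described in Corollary \ref{discont} and the strategy section. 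This is the configuration of a proper invariant (projectivized) subbundle, perfectly compatible with distinct exponents (nothing a priori prevents $m^{+}$ itself from being an $su$-state; that is why Theorem \ref{interior} still needs the perturbative machinery of \cite{M}); it does not produce an invariant conformal structure. The pinching condition excludes the conformal/non-atomic alternative, which is the opposite of what you assert, so your contradiction never materializes.

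What the paper actually does at this stage is quantitative rather than qualitative. One fixes $m_k$ an \emph{ergodic} $\mathbb{P}(F_k)$-invariant measure over $\mu$; the Invariance Principle (Theorem B of \cite{ASV}) gives a continuous disintegration invariant under both holonomies; the pinching periodic point, which persists for $f_k$, forces at most two atoms per fiber; the continuity of holonomies (Proposition \ref{continuity}) places these atoms near $\mathrm{supp}\,m^{+}_x$ and $\mathrm{supp}\,m^{-}_x$, and the two-atom case is excluded because it would split the ergodic measure $m_k$ as $t\,m_k^{+}+(1-t)\,m_k^{-}$. Hence along a subsequence $m_{k_j}\to m^{+}$ or $m^{-}$, and then
\begin{equation*}
0=\lambda_{+}(F'_{k_j})=\int \Phi_{k_j}\,dm_{k_j}\longrightarrow \int \Phi\,dm^{\pm}=\pm\,\lambda_{+}(F')\neq 0,
\end{equation*}
which is the contradiction. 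Note that merely knowing the limit $m$ is an $su$-state is insufficient even for this exponent argument, since the balanced mixture $\tfrac12(m^{+}+m^{-})$ also has zero fiber integral; identifying the limit as $m^{+}$ or $m^{-}$ is precisely what the ergodicity of $m_k$ and the section/holonomy comparison buy, and this is the piece missing from your proposal.
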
 

As already mentioned, the difference between the two theorems lies in the fact that different center Lyapunov exponents implies non-uniform hyperbolicity in the symplectic case. In Section 7, we explain why we also need to ask for the existence of a periodic point when working in the volume-preserving setting. 

The tools developed in Section 3 and 5 also allow us to prove two theorems about continuity of the center Lyapunov exponents. Recall that if we are in the symplectic context, $\mu$ always denotes the Lebesgue measure associated to $\omega$.

\begin{defn}\label{contin} Fix $N>0$. Let $*\in \{\mu, \omega\}$ and $L_{*}\colon \mathcal{E}^N_{*}(M)\to \mathbb{R}^2$ be defined by $$L_{*}(f)=(\lambda^c_1(f,\mu), \lambda^c_2(f, \mu)).$$
\end{defn}

%\begin{obs} Observe that since $f$ is a symplectic diffeomorphism, $\lambda^c_1(f_k)\to \lambda^c_1(f)$ if and only if $\lambda^c_2(f_k)\to \lambda^c_2(f).$
%\end{obs}

Recall that every $f\in \mathcal{E}^N_{*}(M)$ is ergodic. 

Observe that in the symplectic case it is enough to consider the function $f\mapsto \lambda^c_1(f, \mu)$. This is again a consequence of the symmetry of the center Lyapunov exponents. 

\begin{teoC} Let $N>0$ and $r\geq 2$. There exists a subset $\mathcal{U}$ which is $C^1$-open and $C^r$-dense in $\mathcal{E}^N_{\omega}(M)$ and such that every $g\in \mathcal{U}$ is a $C^1$-continuity point of $L_{\omega}$. 
\end{teoC}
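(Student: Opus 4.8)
The plan is first to pass, via the symplectic symmetry $\lambda^c_1(f,\mu)+\lambda^c_2(f,\mu)=0$ (valid since every element of $\mathcal{E}^N_\omega(M)$ is ergodic), to the single nonnegative function $\lambda(f):=\lambda^c_1(f,\mu)$, noting that $L_\omega(f)=(\lambda(f),-\lambda(f))$, so that $L_\omega$ is $C^1$-continuous at $f$ exactly when $\lambda$ is. Writing $\lambda(f)=\inf_{n\ge 1}\tfrac1n\int\log\|Df^n_x|_{E^c_x}\|\,d\mu$ and using that $x\mapsto E^c_x$ and $x\mapsto Df^n_x$ vary $C^0$-continuously with $f$ in the $C^1$ topology (with $\mu$ fixed), each term is $C^1$-continuous in $f$, hence $\lambda$ is $C^1$-upper semicontinuous on $\mathcal{E}^N_\omega(M)$. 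Set $\mathcal{Z}:=\{\lambda=0\}$ and $\mathcal{H}:=\{\lambda>0\}$; by the symmetry, $\mathcal{H}$ is exactly the non-uniformly hyperbolic locus, hence $C^1$-open by Theorem A. I would then take
$$\mathcal{U}:=\mathcal{H}\ \cup\ \mathrm{int}_{C^1}(\mathcal{Z}),$$
which is $C^1$-open, and prove that every $g\in\mathcal{U}$ is a continuity point of $\lambda$ and that $\mathcal{U}$ is $C^r$-dense.

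Continuity of $\lambda$ at points of $\mathrm{int}_{C^1}(\mathcal{Z})$ is immediate, as $\lambda\equiv 0$ there. At $f\in\mathcal{H}$ only lower semicontinuity needs to be checked. Given $f_j\to f$ in $C^1$, one may assume all $f_j\in\mathcal{H}$. For each $j$ the center bundle splits measurably into the Oseledets subbundles for the exponents $\lambda(f_j)>0>-\lambda(f_j)$, and the fast subbundle $E^c_1(f_j)$ is invariant under the unstable holonomies of $\mathbb{P}(F_j)$ ($F_j:=Df_j|_{E^c}$), so its graph carries a $u$-state $m_j$ of $\mathbb{P}(F_j)$ with $\int\log\frac{\|F_j v\|}{\|v\|}\,dm_j=\lambda(f_j)$ by the ergodic theorem. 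Passing to a subsequence, $\lambda(f_j)\to\ell$ and $m_j\to m$ in the weak$^*$ topology; since $F_j\to F$ uniformly, $\int\log\frac{\|Fv\|}{\|v\|}\,dm=\ell$, and by Corollary \ref{limitu} $m$ is a $u$-state of $\mathbb{P}(F)$. Because $f$ is accessible and lies in $\mathcal{H}$, the $u$-state of $\mathbb{P}(F)$ is unique and coincides with the graph of the fast Oseledets subbundle of $f$ — this is the point where accessibility together with the $\alpha$-bunching enters, exactly as in the proof of Theorem A and in \cite{LMY}, ruling out a second continuous $\mathbb{P}(F)$-invariant center line field and hence any $su$-state. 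Therefore $\ell=\int\log\frac{\|Fv\|}{\|v\|}\,dm=\lambda(f)$, and since the subsequence was arbitrary, $\lambda(f_j)\to\lambda(f)$.

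For $C^r$-density, observe $\mathcal{E}^N_\omega(M)\setminus\mathcal{U}=\mathcal{Z}\setminus\mathrm{int}_{C^1}(\mathcal{Z})$. Fix $f$ in this set and a $C^r$-neighborhood $\mathcal{V}$ of $f$; I need a point of $\mathcal{U}$ in $\mathcal{V}$. Since $\mathcal{E}^N_\omega(M)\cap\mathrm{Diff}^\infty_\omega(M)$ is $C^2$-dense in $\mathcal{E}^N_\omega(M)$ (a $C^2$ symplectomorphism can be $C^2$-approximated by smooth ones keeping partial hyperbolicity, accessibility, pinching, bunching, and $dist_{C^2}(\cdot,\mathrm{id})<N$, all $C^1$-open conditions), one may assume $f\in\mathrm{Diff}^\infty$. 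Now $\lambda^c_1(f,\mu)=\lambda^c_2(f,\mu)=0$, so by the invariance principle $\mathbb{P}(F)$ admits an $su$-state; a perturbation argument of the type used in \cite{LMY}, performed inside $\mathcal{V}$, destroys this $su$-state and produces $g\in\mathcal{V}$ with $\lambda(g)>0$, i.e.\ $g\in\mathcal{H}\subseteq\mathcal{U}$. (Had $f$ lain in $\mathrm{int}_{C^1}(\mathcal{Z})$ there would be nothing to prove, but that case was excluded.)

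The step I expect to be the main obstacle is precisely this last one: finding non-uniformly hyperbolic maps $C^r$-close — not merely $C^1$-close, where classical perturbative tools (Bochi, Bochi–Viana, Baraviera–Bonatti) suffice — to a diffeomorphism on the boundary of $\mathcal{Z}$, equivalently breaking its $su$-state by a $C^r$-small perturbation. This is exactly where the hypothesis $dist_{C^2}(f,\mathrm{id})<N$ is indispensable: by Bochi's examples the statement fails without such a bound, and I would expect the argument to mirror the corresponding one in \cite{LMY}, exploiting the same control on the Hölder cocycle $\mathbb{P}(F)$ — the upper semicontinuity of its partial entropy — that already underlies Corollary \ref{limitu} and Theorem A. A secondary technical point, also inherited from \cite{LMY}, is the uniqueness of the $u$-state on $\mathcal{H}$ used in the continuity step.
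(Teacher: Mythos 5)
Your reduction to $\lambda=\lambda^c_1(\cdot,\mu)$ and the upper semicontinuity of $\lambda$ via the subadditive formula are fine, but the heart of your continuity step has a genuine gap: you assert that for $f\in\mathcal{H}$ accessibility together with $\alpha$-bunching forces the $u$-state of $\mathbb{P}(F)$ to be unique, ``ruling out\dots any $su$-state.'' Nothing in the paper or in the tools you cite gives this. Accessibility concerns the strong foliations and does not prevent the center cocycle from admitting a holonomy-invariant measure supported on one or two invariant directions; the only mechanism available for uniqueness is precisely the absence of $su$-states (this is how Proposition \ref{graldiscont} is argued: a $u$-state different from $m^{+}$ forces $m^{-}$ to be an $su$-state), i.e. the bundle-free condition of Definition \ref{bundlefree}, which the paper treats as a genuinely extra, non-automatic hypothesis (cf.\ the discussion of quasi-elliptic periodic points and the fact that Corollary \ref{discont} only gives the one-sided implication ``discontinuity point $\Rightarrow$ $su$-state''). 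If accessible plus non-uniformly hyperbolic implied bundle-free, every point of $\mathcal{H}$ would automatically be a continuity point and most of Sections 5--7 would be superfluous. As it stands, your set $\mathcal{U}=\mathcal{H}\cup\mathrm{int}_{C^1}(\mathcal{Z})$ is not known to consist of continuity points: at an $f\in\mathcal{H}$ carrying an $su$-state the limit $u$-state $m$ may be a nontrivial combination $t\,m^{+}+(1-t)\,m^{-}$ (Proposition \ref{nozero}), and your argument only yields $\ell=\int\Phi\,dm$, which can be strictly smaller than $\lambda(f)$.

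The density step has a related problem. You invoke the perturbation scheme of \cite{LMY}/\cite{M} at a map $f$ with $\lambda(f)=0$ to destroy its $su$-state, but that scheme, as used in the paper's Theorem \ref{interior}, requires a pinching hyperbolic periodic point; at discontinuity points this is supplied by Katok's theorem because such points are non-uniformly hyperbolic, whereas at your zero-exponent $f$ no such periodic point is available, so the cited machinery does not apply as stated. The paper's actual route is different and avoids both issues: it takes (essentially) the bundle-free diffeomorphisms as $\mathcal{U}$, shows this set is $C^1$-open via Corollary \ref{limitu} (limits of $u$-states are $u$-states, applied also to $f^{-1}$ for $s$-states), shows its elements are continuity points via Corollary \ref{discont}, and obtains $C^r$-density through Theorem \ref{interior}: discontinuity points are non-uniformly hyperbolic, hence have the required pinching hyperbolic periodic point, and can be $C^r$-approximated by bundle-free maps by the argument of \cite{M}. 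To repair your proposal you would have to either prove uniqueness of the $u$-state on all of $\mathcal{H}$ (which is not available) or, as the paper does, replace $\mathcal{H}$ by the bundle-free locus and carry out the $C^r$ perturbation only where a pinching hyperbolic periodic point exists.
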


\begin{teoD} Let $N>0$ and $r\geq 2$. There exists a subset $\mathcal{U}$ which is $C^1$-open and $C^r$-dense in $\mathcal{E}^N_{\mu}(M)$ and such that every $g\in \mathcal{U}$ is a $C^1$-continuity point of $L_{\mu}$. 
\end{teoD}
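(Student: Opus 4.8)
The plan is to follow the scheme of Theorem B, replacing the symplectic identity $\lambda^c_1+\lambda^c_2=0$ by two substitutes available in the volume-preserving setting. First, since $\mu$ is fixed and $x\mapsto\log\left|\det(Df_x\vert E^c_x)\right|$ depends continuously on $f$ in the $C^1$ topology, the map $f\mapsto\lambda^c_1(f,\mu)+\lambda^c_2(f,\mu)=\int\log\left|\det(Df_x\vert E^c_x)\right|d\mu$ is $C^1$-continuous on $\mathcal{E}^N_\mu(M)$; hence it suffices to prove continuity of $f\mapsto\lambda^c_1(f,\mu)$, continuity of $\lambda^c_2$ being then automatic. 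Second, Theorem E of \cite{ASV} will supply, for measures in the Lebesgue class, the uniqueness of invariant states that in the symplectic case came for free.

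I would set $\mathcal{O}=\{f\in\mathcal{E}^N_\mu(M)\colon\lambda^c_1(f,\mu)\neq\lambda^c_2(f,\mu)\}$ and take $\mathcal{U}=\mathcal{O}$. By Theorem A' the set $\mathcal{O}$ is $C^1$-open. For $C^r$-density of $\mathcal{O}$ (every $r\geq 2$) I would use a $\mu$-preserving, $C^r$-small perturbation supported near the pinching hyperbolic periodic point $p$ of $f$: since the eigenvalues of $Df^{n_p}_p\vert E^c(p)$ have distinct norms, both different from $1$, one can tilt the derivative along $E^c$ near the orbit of $p$ so as to split the center exponents of $\mu$, exactly as in the density arguments of \cite{LMY} and the Baraviera--Bonatti perturbation technique. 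This is precisely where the hypothesis of a pinching hyperbolic periodic point enters.

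It then remains to show $f\mapsto\lambda^c_1(f,\mu)$ is continuous at each $f\in\mathcal{O}$. Fix such an $f$. The center cocycle $F=Df\vert E^c$ is fiber-bunched (because $f$ is $\alpha$-bunched) over the accessible, $\mu$-preserving $f$, and the center exponents are distinct, so Theorem E of \cite{ASV} gives a \emph{unique} $u$-state $\widehat{\mu}^u_f$ for $\mathbb{P}(F)$ over $\mu$; it is the measure supported on the top center Oseledets subbundle, and Furstenberg's formula yields $\lambda^c_1(f,\mu)=\int\psi_f\,d\widehat{\mu}^u_f$ with $\psi_f(x,[v])=\log\left(\|F_x v\|/\|v\|\right)$ continuous. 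Now take $f_k\to f$ in $\mathcal{E}^N_\mu(M)$; since $\mathcal{O}$ is open, $f_k\in\mathcal{O}$ for $k$ large, so $f_k$ has a unique $u$-state $\widehat{\mu}^u_{f_k}$ and $\lambda^c_1(f_k,\mu)=\int\psi_{f_k}\,d\widehat{\mu}^u_{f_k}$. Transporting everything to the fixed bundle $\mathbb{P}E^c_f$ through a continuous family of near-identity bundle isomorphisms (legitimate because $E^c_{f_k}\to E^c_f$ in $C^0$ as $f_k\to f$ in $C^1$), let $\widehat{\nu}$ be a weak$^*$ accumulation point of $(\widehat{\mu}^u_{f_k})$. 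By Corollary \ref{limitu} — the limit of $u$-states is a $u$-state — $\widehat{\nu}$ is a $u$-state of $\mathbb{P}(F)$ over $\mu$, hence $\widehat{\nu}=\widehat{\mu}^u_f$ by uniqueness; thus $\widehat{\mu}^u_{f_k}\to\widehat{\mu}^u_f$, and since $\psi_{f_k}\to\psi_f$ uniformly, $\lambda^c_1(f_k,\mu)\to\lambda^c_1(f,\mu)$. Together with the first two paragraphs, this proves the theorem with $\mathcal{U}=\mathcal{O}$.

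The hard part is Corollary \ref{limitu} itself; here I would use it as established, but the difficulty is worth recording. The derivative cocycle $\mathbb{P}(F)$ is only H\"older, so the upper semicontinuity of its partial entropy — the main input to the Tahzibi--Yang criterion identifying $u$-states — cannot be quoted from \cite{Y} and must be redone in the H\"older category, where the $C^2$ bound $N$ is used to keep the H\"older constants of $E^c$ and of $F$ uniform over a $C^1$-neighborhood; the other input, the continuity of $f\mapsto h_\mu(f,W^u)=\int\log\left|\det(Df\vert E^u)\right|d\mu$, is where the membership of $\mu$ in the Lebesgue class is used; and the uniqueness of the $u$-state is exactly what Theorem E of \cite{ASV} provides only for measures in the Lebesgue class, which is the reason the statement is confined to $\mathcal{E}^N_\mu(M)$ and has no analogue for arbitrary ergodic measures.
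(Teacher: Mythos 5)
There is a genuine gap at the heart of your third step: the claim that, once $\lambda^c_1(f,\mu)\neq\lambda^c_2(f,\mu)$, Theorem E of \cite{ASV} yields a \emph{unique} $u$-state over $\mu$. Theorem E of \cite{ASV} is a rigidity statement (an $su$-state over the volume admits a disintegration that is continuous and holonomy-invariant); it says nothing about uniqueness of $u$-states. What the hypothesis $\lambda_{+}(F',\mu)>0>\lambda_{-}(F',\mu)$ actually gives is Proposition \ref{nozero}: every $\mathbb{P}(F)$-invariant measure over $\mu$ is a convex combination $t\,m^{+}+(1-t)\,m^{-}$, with $m^{+}$ a $u$-state and $m^{-}$ an $s$-state. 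Uniqueness of the $u$-state fails exactly when $m^{-}$ is also a $u$-state, i.e.\ when $m^{-}$ is an $su$-state (think of a cocycle preserving two continuous line bundles invariant under both holonomies: both Dirac families are $su$-states, the exponents are distinct, and such reducible situations are precisely the classical discontinuity scenario). So your argument that $\widehat{\mu}^u_{f_k}\to\widehat{\mu}^u_f$, and hence that every $f\in\mathcal{O}$ is a continuity point, is unjustified. In fact it proves too much: since $\lambda^c_1$ is upper and $\lambda^c_2$ lower semicontinuous and their sum is continuous, any discontinuity point must lie in $\mathcal{O}$ and any point with equal exponents is automatically a continuity point; so your claim would make $L_{\mu}$ continuous on all of $\mathcal{E}^N_{\mu}(M)$, a much stronger statement than the theorem and one the paper deliberately does not assert.

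This is why the paper does not take $\mathcal{U}=\mathcal{O}$. Its route is: bundle-free (no $su$-state over $\mu$) implies continuity of $L_{\mu}$ (Corollary \ref{discont}); bundle-free is a $C^1$-open condition (Corollary \ref{limitu} applied to $f$ and $f^{-1}$); and the genuinely hard part, Theorem \ref{interior}, shows that a discontinuity point can be $C^r$-approximated by bundle-free diffeomorphisms, by perturbing away from the orbit of the pinching hyperbolic periodic point and using the rigidity of Theorem E of \cite{ASV} together with the quantitative perturbation scheme of \cite{M} to destroy all $su$-states. Your outline uses Corollary \ref{limitu} correctly and the determinant identity for the sum of exponents is fine, but without either the uniqueness of the $u$-state (false in general) or a substitute for Theorem \ref{interior} (destruction of $su$-states by $C^r$-small volume-preserving perturbations), the proposal does not prove Theorem B$'$.
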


Let $f\in \mathcal{B}^N(M)$, then $F=Df\vert E^c$ is a linear cocycle over $f$, that is, $F$ is a continuous transformation, $F\colon E^c\to E^c$, satisfying $\pi \circ F= f\circ \pi$ and acting by linear isomorphisms, $F_{x}\colon E^c_{x}\to E^c_{f(x)}$, on the fibers. By Furstenberg, Kesten \cite{FK}, the extremal Lyapunov exponents $$\lambda_{+}(F,x)=\lim\limits_{n\rightarrow\infty} \frac{1}{n} \text{log} \left\|F_{x}^{n}\right\| \quad \text{and} \quad \lambda_{-}(F,x)=\lim\limits_{n\rightarrow\infty} \frac{1}{n} \text{log} \left\|(F_{x}^{n})^{-1}\right\|^{-1},$$ exist at $\nu$-almost every $x\in M$, relative to any $f$-invariant probability measure $\nu$. It is clear that $\lambda_{-}(F,x)\leq \lambda_{+}(F,x)$ whenever they are defined. 

If $(f,\nu)$ is ergodic, the extremal Lyapunov exponents of $F$ are constant on a full $\nu$-measure set and we denote them by $\lambda_{+}(F, \nu)$ e $\lambda_{-}(F, \nu)$. Observe that they coincide with the center Lyapunov exponents of $(f, \nu)$.  

\begin{defn}\label{holonomy} We call \emph{invariant unstable holonomy} for $F$ a family $H^{u}$ of homeomorphisms $H^{u}_{x,y}\colon V_{x}\to V_{y}$, defined for all $x$ and $y$ in the same strong-unstable leaf of $f$ and satisfying
\begin{enumerate} [label=\emph{(\alph*)}]
\item $H^{u}_{y,z}\circ H^{u}_{x,y}= H^{u}_{x,z}$ and $H^{u}_{x,x}=Id$;
\item $F^{-1}_{y}\circ H^{u}_{x,y}= H^{u}_{f^{-1}(x),f^{-1}(y)}\circ F^{-1}_{x}$\; 
\item $(x,y, \xi)\mapsto H^{u}_{x,y}(\xi)$ is continuous when $(x,y)$ varies in the set of pairs of points in the same local strong-unstable leaf;
\item There are $C>0$ and $\eta >0$ such that $H^{u}_{x,y}$ is $(C,\eta)$-H\"older continuous for every $x$ and $y$ in the same local strong-unstable leaf.
\end{enumerate}
\emph{Invariant stable holonomy} is defined analogously, for pairs of points in the same strong-stable leaf.
\end{defn}

%A \emph{smooth cocycle} over $f$ is a continuous transformation $\mathcal{F}\colon \mathcal{V}\to \mathcal{V}$ such that $\pi\circ \mathcal{F}=f\circ \pi$, every $\mathcal{F}_x\colon \mathcal{V}_x\to \mathcal{V}_{f(x)}$ is a $C^1$ diffeomorphism depending continuously on $x$ and the norms of the derivative $D\mathcal{F}_x(\xi)$ and its inverse are bounded.

The \emph{projective cocycle} associated to $F$ is the continuous map $\mathbb{P}(F)\colon \mathbb{P}(E^c)\to \mathbb{P}(E^c)$ whose action on the fibers is given by the projectivization of $F_x\colon E^c_x \to E^c_{f(x)}$. Since $\dim\,E^c=2$, $\mathbb{P}(F)$ is a cocycle of circle diffeomorphisms over $f$. 

For every $f$-invariant probability measure $\nu$, there always exists a $\mathbb{P}(F)$-invariant probability measure $m$ that projects down to $\nu$. This is true because the projective cocycle $\mathbb{P}(F)$ is continuous and the domain $\mathbb{P}(E^c)$ is compact. 

The extremal Lyapunov exponents of $\mathbb{P}(F)$ for $m$ exist and satisfy,
\begin{equation}\label{lyap2}
\begin{aligned}
\lambda_{+}(\mathbb{P}(F), x,\xi)\leq \lambda_{+}(F,x)& - \lambda_{-}(F,x) \quad \text{and}\\
&\lambda_{-}(\mathbb{P}(F), x,\xi)\geq \lambda_{-}(F,x) - \lambda_{+}(F,x),
\end{aligned}
\end{equation}
for every $x\in M$ and $\xi\in \mathbb{P}(E^c_x)$ where they are defined.

We can define an invariant unstable holonomy for $\mathbb{P}(F)$ analogously to Definition \ref{holonomy}. If $H^u$ denotes a holonomy for $F$, then $h^u=\mathbb{P}(H^u)$ defines an invariant unstable holonomy for $\mathbb{P}(F)$. 

If $\pi\colon \mathcal{V}\to M$ is a fiber bundle over $M$ and $m$ a probability measure in $\mathcal{V}$ with $\pi_{*}m=\nu$, then there exists a disintegration of $m$ into conditional probability measures $\left\{m_{x} : x\in M \right\}$ along the fibers which is essentially unique, that is, a measurable family of probability measures such that $m_x(\mathcal{V}_x)=1$ for almost every $x\in M$ and $$m(U)=\int m_x(U\cap \mathcal{E}_x) d\nu(x),$$ for every measurable set $U\subset \mathcal{V}$. See \cite{Rok1}.

Let $f\in \mathcal{B}^N(M)$, $\nu$ an ergodic measure of $f$, $h^u$ an invariant unstable holonomy for $\mathbb{P}(F)$ and $m$ a probability measure on $\mathbb{P}(E^c)$. 

\begin{defn} We say that $m$ is a \emph{$u$-state} for $\nu$ if $m$ is an $\mathbb{P}(F)$-invariant measure projecting down to $\nu$ and it admits some essentially u-invariant disintegration $\left\{m_{x} : x\in M \right\}$, that is, there exists a full $\nu$-measure set $M^u$ such that $(h^{u}_{x,y})_{*}m_{x}=m_{y}$ for every $x$ and $y$ in the same strong-unstable leaf if $x,y\in M^u.$ 

The definition of \emph{s-state} for $\nu$ is analogous and we say that $m$ is an \emph{su-state} for $\nu$ if it is both an s-state and a u-state. 
\end{defn} 

%\begin{invariance}[Theorem 4.1, \cite{ASV}]\label{invgral} 
%Let $f$ be a $C^1$ partially hyperbolic diffeomorphism, $G\colon V\to V$ a linear cocycle over $f$, $\nu$ an $f$-invariant probability measure and $m$ an $\mathbb{P}(G)$-invariant probability measure projecting down to $\nu$. 

%\begin{enumerate} [label=\emph{(\alph*)}]
%\item If $\mathbb{P}(G)$ admits invariant stable holonomies and $\lambda_{-}(\mathbb{P}(G),x,\xi)\geq 0$ at $m$-almost every point $(x,v)\in \mathbb{P}(V)$, then $m$ is an $s$-state. 
%\item If $\mathbb{P}(G)$ admits invariant unstable holonomies and $\lambda_{+}(\mathbb{P}(G),x,\xi)\leq 0$ at $m$-almost every point $(x,v)\in \mathbb{P}(V)$, then $m$ is a $u$-state. 
%\end{enumerate} 
%\end{invariance}

\begin{defn}\label{bundlefree} 
Let $f\in \mathcal{B}^N(M)$ and $\nu$ an ergodic probability measure of $f$. We say that $f$ is \emph{bundle-free} for $\nu$ if the projective derivative cocycle $\mathbb{P}(F)$, $F=Df\vert E^c$, does not admit $su$-states for $\nu$. 
\end{defn} 

In Section 5, we prove that the subset of bundle-free diffeomorphisms on $\mathcal{E}^N_{*}(M)$ is a $C^1$-open set. Moreover, if $M=\mathbb{T}^d$ with $d\geq 2$, the subset of bundle-free diffeomorphisms on $\mathcal{E}^N_{\omega}(M)$ is non-empty. We can also set some hypotheses for diffeomorphisms on $\mathcal{E}^N_{*}(M)$ in order to guarantee that it is $C^r$-approximated for bundle-free diffeomorphisms for every $r\geq 2$. For example, if $f\in \mathcal{E}^N_{\omega}(M)$ has a quasi-elliptic periodic point, it verifies this property, see Section 7.2.2 of \cite{M} and Remark 2.9 of \cite{ASV}. 

The precise statement of Theorem C is the following: 

\begin{nuevo} Let $f\in \mathcal{E}^N(M)_{\omega}(M)$ be a non-uniform hyperbolic bundle-free diffeomorphism. Then, for every sequence of ergodic measures $\nu_k$ such that $\nu_k\to \mu$ in the weak$^{*}$ topology and for $i=s$ or $i=u$ verifies
\begin{equation*}
\lim_{k\to \infty} h_{\nu_k}(f,W^i)=h_{\mu}(f,W^i),
\end{equation*}
there exists $K\in \mathbb{N}$ such that $\nu_k$ is a hyperbolic measures of $f$ for every $k\geq K$. 
Moreover, for $j=1,2$, $\lambda^c_j(f, \nu_k)\to \lambda^c_j(f, \mu)$ when $k\to \infty$. 
\end{nuevo}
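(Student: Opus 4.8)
The plan is to argue by contradiction, passing to a subsequence along which the center Lyapunov exponents fail to converge to those of $\mu$, and then to extract a limiting $\mathbb{P}(F)$-invariant measure that is forced to be an $su$-state, contradicting the bundle-free hypothesis. More precisely, suppose the conclusion fails: up to relabeling we may assume that for every $k$ the measure $\nu_k$ is either non-hyperbolic or satisfies $|\lambda^c_j(f,\nu_k)-\lambda^c_j(f,\mu)|\geq\delta$ for some fixed $\delta>0$ and some $j$. For each $k$, lift $\nu_k$ to a $\mathbb{P}(F)$-invariant measure $m_k$ on $\mathbb{P}(E^c)$ that is a $u$-state for $\nu_k$ (such a lift exists by the standard construction: average a fiberwise measure under forward iteration and use the u-state criterion of \cite{TY}, noting that $\nu_k$ is ergodic). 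Since $\mathbb{P}(E^c)$ is compact, pass to a subsequence so that $m_k\to m$ in the weak$^*$ topology; then $m$ is $\mathbb{P}(F)$-invariant and projects to $\mu$.

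The first key step is to show $m$ is a $u$-state for $\mu$. This is exactly the content of Corollary \ref{limitu} (the limit of $u$-states is a $u$-state), which the introduction promises via the entropy criterion of Tahzibi--Yang together with upper semicontinuity of the partial entropy $h_{\cdot}(\mathbb{P}(F),\mathcal{W}^u)$ of the projective cocycle. Concretely: the u-state criterion says $m_k$ is a $u$-state iff $h_{m_k}(\mathbb{P}(F),\mathcal{W}^u)=h_{\nu_k}(f,W^u)$; the hypothesis $h_{\nu_k}(f,W^u)\to h_\mu(f,W^u)$ supplies convergence on the base side; upper semicontinuity of the partial entropy of the (Hölder) projective cocycle gives $\limsup_k h_{m_k}(\mathbb{P}(F),\mathcal{W}^u)\le h_m(\mathbb{P}(F),\mathcal{W}^u)$; and the general inequality $h_m(\mathbb{P}(F),\mathcal{W}^u)\le h_\mu(f,W^u)$ (Ledrappier--Young style, since $\mathbb{P}(F)$ has no expansion along the fiber beyond what $f$ provides along $W^u$) combines with these to force equality $h_m(\mathbb{P}(F),\mathcal{W}^u)=h_\mu(f,W^u)$, i.e.\ $m$ is a $u$-state. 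If instead the hypothesis holds for $i=s$, run the symmetric argument with $f^{-1}$ to produce an $s$-state.

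The second key step is to produce, from the \emph{same} contradiction hypothesis, a measure that is \emph{also} an $s$-state for $\mu$ (in the first case) or a $u$-state (in the second), so that bundle-freeness is violated. Here one uses that $f$ is accessible and, crucially, that $f$ lies in $\mathcal{E}^N_\omega(M)$ so the results quoted (in particular the tools from Section 3 and 5, building on \cite{ASV}, \cite{AV2}) apply: an invariant measure of $\mathbb{P}(F)$ over the \emph{ergodic} base $\mu$ that is a $u$-state is automatically an $su$-state \emph{unless} the center exponents of $\mu$ are already as described — this is the dichotomy à la Avila--Viana. If $\mu$ is non-uniformly hyperbolic (which it is, by hypothesis on $f$) and does \emph{not} admit $su$-states, then the $u$-state $m$ for $\mu$ must be the unique one, the "Oseledets/hyperbolic" measure supported on the dominated (most-expanded) direction; and a $u$-state sitting on the dominated subbundle forces, via continuity of that subbundle and the relation \eqref{lyap2}, the convergence $\lambda^c_j(f,\nu_k)\to\lambda^c_j(f,\mu)$ and eventual hyperbolicity of $\nu_k$ — contradicting our choice of subsequence. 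Unwinding: the only way the conclusion of the theorem can fail is for $m$ to fail to concentrate on the hyperbolic Oseledets direction of $\mu$, which (by the dichotomy) makes it an $su$-state for $\mu$, contradicting bundle-freeness.

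I expect the main obstacle to be Step~1, specifically establishing upper semicontinuity of the partial entropy for the projective cocycle $\mathbb{P}(F)$ when it is only Hölder (not $C^1$) — the introduction flags exactly this point, noting Yang's theorem \cite{Y} is stated for $C^1$ systems and "we need to adjust the techniques to our context." Concretely one must redo the covering/Misiurewicz-type argument bounding partial entropy along $\mathcal{W}^u$ using only the Hölder control of the holonomies of $\mathbb{P}(F)$ (which holds since $f\in\mathcal{B}^N(M)$ is $\alpha$-bunched and $\alpha$-pinched, so $E^c$ and the cocycle holonomies are $\alpha$-Hölder with constants controlled by $N$ — this is where the $C^2$-bounded-distance-to-identity hypothesis enters), and verify the estimates pass to weak$^*$ limits. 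The secondary difficulty is making the Avila--Viana-type dichotomy in Step~3 precise in the 2-dimensional center setting with a Hölder cocycle; but this is largely quotable from \cite{ASV} and the $\mathcal{E}^N$-framework, so the real work is the semicontinuity.
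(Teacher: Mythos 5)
Your overall strategy coincides with the paper's: argue by contradiction, lift the measures $\nu_k$ to $u$-states of $\mathbb{P}(F)$, pass to a weak$^*$ limit, invoke the limit-of-$u$-states result (Corollary \ref{limitu}/Proposition \ref{limitugral}, proved via the Tahzibi--Yang criterion and the semicontinuity of the partial entropy of the H\"older projective cocycle, exactly as you describe), and then contradict bundle-freeness. The genuine gap is in your last step, where the contradiction must actually be produced. The statement you invoke --- that a $u$-state over $\mu$ is ``automatically an $su$-state unless the center exponents of $\mu$ are as described,'' by an Avila--Viana dichotomy --- is not a correct statement: the Invariance Principle applies when $\lambda_{+}=\lambda_{-}$, which is excluded here since $f$ is non-uniformly hyperbolic and symplectic. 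The tool that does the work is Proposition \ref{nozero}: when $\lambda_{+}(F',\mu)>0>\lambda_{-}(F',\mu)$, every $\mathbb{P}(F)$-invariant lift of $\mu$ is $t\,m^{+}+(1-t)\,m^{-}$, so a $u$-state different from $m^{+}$ forces $m^{-}$ to be a $u$-state, hence an $su$-state. Granting this, your limit $m$ must equal $m^{+}$; but your deduction of $\lambda^c_j(f,\nu_k)\to\lambda^c_j(f,\mu)$ from $m=m^{+}$ is unsupported as written: Oseledets subbundles are only measurable, so ``continuity of that subbundle'' is not available, and Equation (\ref{lyap2}) compares exponents of $\mathbb{P}(F)$ with those of $F$ and is not the relevant relation.

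The source of the trouble is that you take $m_k$ to be an arbitrary $u$-state over $\nu_k$; since bundle-freeness is assumed only for $\mu$, $u$-states over $\nu_k$ need not be unique, and you have no identity linking $\int\Phi\,dm_k$ to $\lambda_{+}(F',\nu_k)$. The paper sidesteps this by choosing $m_k$ to realize the maximum in Lemma \ref{fust} (so $\int\Phi'_k\,dm_k=\lambda_{+}(F'_k,\nu_k)$) and then checking it is a $u$-state, via Proposition \ref{nozero} when the exponent is positive and via the Invariance Principle (Theorem 4.1 of \cite{ASV}) when it vanishes; along a subsequence where the exponents stay away from $\lambda_{+}(F',\mu)$, the limit $u$-state $m$ then satisfies $\int\Phi'\,dm<\int\Phi'\,dm^{+}$, so $m\neq m^{+}$ and Proposition \ref{nozero} produces the $su$-state directly (this is Proposition \ref{graldiscont}, from which Theorems C and D follow immediately). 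Your route can be repaired without changing $m_k$: from Lemma \ref{fust} one has $\int\Phi'\,dm_k\leq\lambda_{+}(F',\nu_k)$, so $m_k\to m^{+}$ gives $\liminf_k\lambda_{+}(F',\nu_k)\geq\lambda_{+}(F',\mu)$; combining with the upper semicontinuity of $\nu\mapsto\lambda_{+}(F',\nu)$, the symmetry $\lambda_{+}(F',\cdot)+\lambda_{-}(F',\cdot)=0$, and the continuity of $\nu\mapsto\int\log|\det(Df|E^c)|\,d\nu$ under weak$^*$ convergence, one gets convergence of both center exponents and hence eventual hyperbolicity of $\nu_k$. As written, however, this step is missing, so the argument is incomplete precisely where bundle-freeness is converted into convergence of the exponents.
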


Next, we state a more general result which contains Theorem C. 

\begin{nuevo2} Let $f\in \mathcal{B}^N(M)$, $\nu$ an ergodic hyperbolic probability measure for $f$ and $f$ bundle-free for $\nu$. Then, for every sequence of diffeomorphisms $f_k\in \mathcal{B}^N(M)$ and every sequence $\nu_k$ of ergodic measures of $f_k$ such that $f_k\to f$ in the $C^1$-topology, $\nu_k\to \nu$ in the weak$^{*}$ topology and for $i=s$ or $i=u$ verifies
\begin{equation*}
\lim_{k\to \infty} h_{\nu_k}(f_k,W^i_k)=h_{\nu}(f,W^i),
\end{equation*}
there exists $K\in \mathbb{N}$ such that $\nu_k$ is a hyperbolic measures of $f_k$ for every $k\geq K$. 
Moreover, for $j=1,2$, $\lambda^c_j(f_k, \nu_k)\to \lambda^c_j(f, \nu)$ when $k\to \infty$. 
\end{nuevo2}

\subsection*{Strategy of the proofs}

Several results about Lyapunov exponents have been proved for linear cocycles, see for instance \cite{ASV, BBB, BV2}. Here we apply those ideas to the derivative cocycle $F=Df\vert E^c$. The key difference is that we are perturbing the cocycle and the diffeomorphism in the base at the same time. 

The main objective for the proof of Theorem A and Theorem B is to get a characterization of the discontinuity points of the function $L_{\omega}$ in Definition \ref{contin}. We are able to conclude this result in a more general context: we consider the function depending on the diffeomorphism and on the measure. This is done in Section 6 using the previous results of Section 3 and 5. 

The first step is to prove that the holonomies of the cocycle $F=Df\vert E^c$ depends continuously on $f\in \mathcal{B}^N(M)$. This is obtained in Proposition \ref{continuity} and Proposition \ref{contF}. Section 5 is devoted to prove that the limit of $u$-states is a $u$-state.

Corollary \ref{discont} states that if $f\in \mathcal{E}^N_{\omega}(M)$ is a discontinuity point of $L_{\omega}$, then the fiber bundle $\mathbb{P}(E^c(f))$ admits two continuous sections, $x\mapsto a_x$ and $x\mapsto b_x$. Moreover, if $\mathit{F=Df\vert E^c}$, then those sections are invariant by the cocycle $\mathbb{P}(F)$ and by the invariant stable and unstable holonomies of $\mathbb{P}(F)$.

In order to conclude the proof of Theorem A, we use the argument in Section 6 of \cite{LMY}. We prove that the diffeomorphisms having zero center Lyapunov exponents form a closed subset. The proof is by contradiction: suppose there exist a sequence $f_k$ and a diffeomorphism $f$ such that $f_k\to f$ in the $C^1$ topology, $\lambda^c_1(f_k)=\lambda^c_2(f_k)=$ for every $k\in \mathbb{N}$ and $\lambda^c_1(f)\neq \lambda^c_2(f)$. In particular, $f$ is a discontinuity point of $L_{\omega}$. Therefore, by the result above, we have two continuous sections of $\mathit{\mathbb{P}(E^c(f))}$, $x\mapsto a_x$ and $x\mapsto b_x$. 

Since the center Lyapunov exponents of $f_k$ are zero, we use the Invariance Principle (Theorem B of \cite{ASV}) to prove that for every $k$ big enough, there exists a continuous section of $\mathit{\mathbb{P}(E^c(f_k))}$, $x\mapsto a_{k,x}$.

By the continuity of the holonomies, we conclude that the section $a_{k,x}$ has to be close to $a_x$ or to $b_x$ for every $x\in M$. However, this implies that $\lambda^c_1(f_k)\to \lambda^c_1(f)$ or $\lambda^c_2(f_k)\to \lambda^c_2(f)$ which contradicts our assumption. 

The proof of Theorem B is a combination of the results in Section 5, the arguments in the proof of Theorem A and the results of \cite{M}. We find a diffeomorphism $g$ which is $C^r$-arbitrarily close to $f$ and it is a continuity point for $L_{\omega}$. More precisely, $g$ has the following property: let $G=Dg\vert E^c(g)$, then there is no continuous section invariant by $\mathbb{P}(G)$ and its holonomies. As a consequence of Section 5, this property is $C^1$-open and then we conclude Theorem B. 

The proofs of Theorem A' and B' are analogous. 

Theorem C and Theorem D are a direct consequence of the characterization of discontinuity points established in Section 6. 

\section{Continuity of holonomies}

The main result in this section is Proposition \ref{contF}. This is a key property of diffeomorphisms in $\mathcal{B}^N(M)$ and is going to be used in Section 5. It states that the unstable holonomies varies continuously with the diffeomorphism.

%\subsection{Linear Cocycles}

Consider the \emph{center derivative cocycle} associated to $f$, $F=Df\vert E^c$. 

Since $M$ is compact, we can define a distance in $TM$ in the following way: For every $x,y\in M$ close enough, denote $\pi_{x,y}:T_{x}M\longrightarrow T_{y}M$ the parallel transport along $\zeta$, where $\zeta$ is the geodesic satisfying $dist(x,y)=\text{length}(\zeta)$. Then, given two points $(x,v)$ and $(y,w)$ in $TM$ define $$d((x,v),(y,w))=dist(x,y) + \left\|\pi_{x,y}(v)-w\right\|.$$

To simplify the notation we are going to write $$d((x,v), (y,w))=d(v,w)\quad \text{and} \quad \pi^n_{x,y}=\pi_{f^{-n}(x),f^{-n}(y)}.$$

If $x,y\in M$ with $y\in W^u_{loc}(x)$, then for every $n\in \mathbb{N}$ define $$A_n(f,x,y)=(F^{-n}(y))^{-1}\circ P_{E^c(f^{-n}(y))} \circ \pi^n_{x,y} \circ F^{-n}(x),$$ and $$A_0(f,x,y)= P_{E^c(y)} \circ \pi_{x,y}\vert E^c(x).$$

Here $$F^{-n}(x)=F^{-1}(f^{-n+1}(x))\circ \dots \circ F^{-1}(x),$$ and $P_{E^c(x)}$ denotes the orthogonal projection over $E^c(x)$.

In Section 3 of \cite{M}, has been proved that if $f$ is a $C^2$ partially hyperbolic diffeomorphism which is $\alpha$-pinched and $\alpha$-bunched for some $\alpha>0$, then the sequence $A_n(f,x,y)$ is a Cauchy sequence and the limit $$H^u_{x,y}=\lim\limits_{n\rightarrow \infty} A_n(f,x,y),$$ defines an invariant unstable holonomy for $F=Df\vert E^c$. 

The goal of this subsection is to prove that these holonomies varies continuously in $\mathcal{B}^N(M)$ with the $C^1$ topology, see Proposition \ref{continuity} below. In order to do so, we give an outline of the proof in \cite{M}. 

Since $f$ is $C^2$ and $\alpha$-pinched, there exits $C_1(f)>0$ such that if $x,y$ are close enough, then 
\begin{equation}\label{holderE}
d(E^c_x, E^c_y)<C_1(f)\,dist(x,y)^{\alpha}.
\end{equation}
Here we use again the parallel transport $\pi_{x,y}\colon T_xM\to T_yM$ to compare subspaces of different tangent spaces. Also, there exists $C_2(f)>0$ such that for every $(x,v),\, (y,w)\in TM$, 
\begin{equation}\label{C2}
d(Df(x,v), Df(y,w))< C_2(f)\, d(v,w).
\end{equation}

Moreover, the $\alpha$-bunched condition (Definition \ref{bunched}) implies there exists $\varsigma<1$ such that $$\upsilon^{\alpha \varsigma} < \gamma \widehat{\gamma} \qquad \text{and} \qquad \widehat{\upsilon}^{\alpha \varsigma}< \gamma \widehat{\gamma}.$$ 

The three estimations above allow us to conclude that 
\begin{equation}\label{Cauchy}
\left\|A_{n+1}(f,x,y)- A_n(f,x,y)\right\|\leq C_1(f)\, C_2(f)\, \widehat{\upsilon}^n(x)^{(1-\varsigma)\alpha}dist(x,y)^{\alpha}.
\end{equation}

This proves that the sequence $A_n(f,x,y)$ is a Cauchy sequence and therefore the limit exists. 

The following proposition states that the invariant unstable holonomy for $F=Df\vert E^c$ varies continuously with $f$. 

\begin{prop}\label{continuity} Let $N>0$. Fix $f\in \mathcal{B}^N(M)$, $x\in M$ and $a\in E^c(x,f)$. For every $\epsilon>0$ there exist $\delta>0$ and a neighborhood of $f$ in the $C^1$ topology, $\mathcal{V}(f)$, such that for every $y\in W^u_{loc}(x,f)$, every $g\in \mathcal{B}^N(M)\cap \mathcal{V}(f)$, every $w,z\in M$ with $w\in W^u_{loc}(z,g)$, $dist(x,z)<\delta$ and $dist(y,w)<\delta$ and every $b\in E^c(z,g)$ with $d(a,b)<\delta$, we have
$$d(H^u_{x,y}(f)(a), H^u_{z,w}(g)(b))< \epsilon.$$
\end{prop}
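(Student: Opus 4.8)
The plan is to exploit the explicit Cauchy estimate \eqref{Cauchy} uniformly over the class $\mathcal{B}^N(M)$, together with the continuity of the finite-iterate approximations $A_n$ in all of their arguments. The key point is that the constants $C_1$, $C_2$, the bunching rate $\varsigma$, and the contraction functions $\widehat\upsilon$ can all be taken uniform (bounded away from the bad values) on a $C^1$-neighborhood of $f$ inside $\mathcal{B}^N(M)$: indeed $dist_{C^2}(g,id)<N$ bounds the $C^2$-size of $g$, the $\alpha$-pinching and $\alpha$-bunching are $C^1$-open, and the H\"older constant of $E^c$ in \eqref{holderE} and the Lipschitz constant in \eqref{C2} depend only on the $C^2$-jet and the pinching/bunching data. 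So there is a neighborhood $\mathcal{V}_0(f)$, a constant $C>0$, and $\theta<1$ with
\begin{equation*}
\left\|A_{n+1}(g,x',y')-A_n(g,x',y')\right\|\leq C\,\theta^{n}\,dist(x',y')^{\alpha}
\end{equation*}
for all $g\in\mathcal{B}^N(M)\cap\mathcal{V}_0(f)$ and all $x',y'$ in a common local unstable leaf of $g$. Summing the tail, $\left\|H^u_{x',y'}(g)-A_n(g,x',y')\right\|\leq C'\theta^n$ with $C'$ uniform.

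Given $\epsilon>0$, first choose $n$ with $C'\theta^n<\epsilon/4$. It then suffices to compare $A_n(f,x,y)(a)$ with $A_n(g,z,w)(b)$, since the two holonomies are within $\epsilon/4$ of their respective $n$-th approximations (here I use that $z,w$ lie in a common local $g$-unstable leaf, which must be assumed or is implicit in the statement since $w\in W^u_{loc}(z,g)$). Now $A_n$ is an explicit finite composition: $n$ applications of the inverse derivative cocycle $F^{-1}$ along the orbits of the base point, interleaved with orthogonal projections onto $E^c$ and parallel transports $\pi$. Each of these building blocks is jointly continuous in $(g,\text{point},\text{vector})$: the derivative $Dg$ and the fiber $E^c(g,\cdot)$ depend continuously on $g$ in the $C^1$-topology (partial hyperbolicity is $C^1$-open and the bundles vary continuously), the backward orbit $g^{-j}(z)$ of a point near $x$ stays near $f^{-j}(x)$ for each fixed $j\le n$ when $g$ is $C^1$-close and $z$ is close to $x$, and the parallel transport $\pi_{x,y}$ is continuous in its endpoints. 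Composing $2n{+}1$-or-so uniformly (locally) Lipschitz/continuous maps, I get a neighborhood $\mathcal{V}(f)\subset\mathcal{V}_0(f)$ and $\delta>0$ so that $dist(x,z)<\delta$, $dist(y,w)<\delta$, $d(a,b)<\delta$ force $d(A_n(f,x,y)(a),A_n(g,z,w)(b))<\epsilon/2$. Combining the three estimates via the triangle inequality yields $d(H^u_{x,y}(f)(a),H^u_{z,w}(g)(b))<\epsilon$.

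The main obstacle I anticipate is the uniformity of the tail bound over the whole neighborhood: one must verify carefully that $C$ and $\theta$ in the Cauchy estimate can be chosen independent of $g$, which requires tracking that all the quantities entering the derivation of \eqref{Cauchy} in \cite{M} — the constant $C_1(g)$ from the H\"older regularity of $E^c$, the constant $C_2(g)$ from \eqref{C2}, and the bunching exponent $\varsigma$ such that $\upsilon^{\alpha\varsigma}<\gamma\widehat\gamma$ and $\widehat\upsilon^{\alpha\varsigma}<\gamma\widehat\gamma$ — vary semicontinuously and hence are controlled on a $C^1$-neighborhood inside $\mathcal{B}^N(M)$; this is exactly where the $dist_{C^2}(g,id)<N$ hypothesis and the openness of the pinching/bunching conditions are used. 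A secondary, more routine point is that the domains $V_x$ of the holonomy maps and the statement "$y$ in the local unstable leaf" must be handled: since $E^c$ is only H\"older, the local unstable leaf of $g$ through a point near $x$ is $C^1$-close to that of $f$ (strong foliations depend continuously in the $C^0$ sense on the $C^1$ diffeomorphism), so for $g$ near $f$ and $z$ near $x$ there genuinely exist admissible $w\in W^u_{loc}(z,g)$ near $y$, and all the backward iterates stay in a fixed compact neighborhood where the estimates apply. Once these uniformities are in place, the proof is a finite-step continuity argument plus a geometric-series tail.
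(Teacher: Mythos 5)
Your proposal is correct and follows essentially the same route as the paper: a tail bound uniform over a $C^1$-neighborhood in $\mathcal{B}^N(M)$ coming from the Cauchy estimate \eqref{Cauchy} (with $C_1$ uniform by the H\"older theory of $E^c$ and $C_2$ uniform from the bound $dist_{C^2}(\cdot,id)<N$), followed by joint continuity of the finite-iterate approximations $A_n(g,z,w)$ in the diffeomorphism, the base points, and the vector, and a triangle inequality. The only difference is that you spell out the uniformity and finite-composition continuity in more detail than the paper's sketch, which is consistent with its argument.
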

\begin{proof}

First observe that if $f$ is $\alpha$-pinched and $\alpha$-bunched, then there exists a $C^1$ neighborhood of $f$ where every $g$ is also $\alpha$-pinched and $\alpha$-bunched with the same $\alpha$.

By Equation (\ref{Cauchy}) applied to $f$ and $g$, we have for every $n\in \mathbb{N}$,

$$\left\|H^u_{x,y}(f)- A_n(f,x,y)\right\|\leq C_1(f)\, C_2(f)\, \widehat{\upsilon}^n(x)^{(1-\varsigma)\alpha}dist(x,y)^{\alpha}.$$
                                  
$$\left\|H^u_{z,w}(g)- A_n(g,z,w)\right\|\leq C_1(g)\, C_2(g)\, \widehat{\upsilon}^n(z)^{(1-\varsigma)\alpha}dist(z,w)^{\alpha}.$$ 

Here $C_1$ is defined by Equation (\ref{holderE}) and can be taken uniform in a $C^1$ neighborhood of $f$. See, for example, \cite{W}. We can also take $C_2$ in Equation (\ref{C2}) uniform since $f,g\in \mathcal{B}^N(M)$. 

Then, the proposition is a consequence of the continuity of $$A_n(f,x,y)=(F^{-n}(y))^{-1}\circ P_{E^c(f^{-n}(y))} \circ \pi^n_{x,y} \circ F^{-n}(x),$$ as a function of $(f,x,y)$.

More precisely, the distance $$d(A_n(f,x,y)(a), A_n(g,z,w)(b))$$ can be bounded by an expression that depends on the following terms:
$$dist(x,z), \quad dist(y,w), \quad d(a,b),$$
$$\angle (E^c(x,f), E^c(x,g)), \quad \angle (E^c(y,f), E^c(y,g)),$$ 
$$\left\|Df^{-1}(f^{-j}(x))-Dg^{-1}(f^{-j}(z))\right\|\quad \text{and} \quad \left\|Df(f^{-j}(y))-Dg(f^{-j}(w))\right\|,$$ for $j\in\{0,...,n\}.$
\end{proof}

\begin{obs} Observe that $\delta$ in Proposition \ref{continuity} is independent of the point $y\in W^u_{loc}(x,f)$.
\end{obs} 
%Simplificar: 
%Let $\zeta=[z_0,z_1,..,z_N]$ be a $su$-path for $f$ and denote $H_{z_i}=H^{*}_{z_{i-1},z_i}$ for every $i\in \{1,...,N\}$ with $*\in \{s,u\}$. Then, we consider $H_{\zeta}=H_{z_{N}}\circ \dots H_{z_1}$. The following Corollary gives an estimation about how this holonomy changes under the variation of $f$ and the $su$-path. 
%\begin{cor}\label{composition} If $g$ is close enough to $f$, $\zeta_f=[x_0,...,x_N]$ and $\zeta_g=[y_0,...,y_N]$ are $su$-paths for $f$ and $g$ respectively, $a\in E^c(x_0,f)$ and $b\in E^c(y_0,g)$, then $$d(H_{\zeta_f}(a), H_{\zeta_g}(b))\leq \sum_{i=0}^{N-1} \prod_{j=i+2}^{N} \left\|H_{x_j}\right\| \psi(H_{x_{i+1}}) + \prod_{j=1}^{N} \left\|H_{x_j}\right\| d(a,b), $$ where $$\psi(H_{x_{i+1}})=d(H_{x_{i+1}}(a_i), H_{y_{i+1}}(\pi_{x_i,y_i}(a_i)))$$ and $$a_i=H_{x_i}\circ \dots \circ H_{x_1}(a).$$
%\end{cor}
%By Remark, there exists $C>0$ such that for every $j\in \{1,...,N\}$, if $x_{j-1}\in W^{*}_{loc}(x_j)$ with $*\,\{s,u\}$, then $\left\|H_{x_j}\right\|< 1+ C\, dist(x_{j-1},x_j)^{\alpha}$. Therefore, if $\zeta_f=[x_0,...,x_N]$ is a $su$-path with $x_{j-1}\in W^{*}_{loc}(x_j)$ and $dist(x_{j-1},x_j)< L$ for every $j\in \{1,...,N\}$, then $$\prod_{j=1}^{N} \left\|H_{x_j}\right\|< (1+ C\, L^{\alpha})^N.$$ 
%This proves that we can find a bound for $\prod_{j=1}^{N} \left\|H_{x_j}\right\|$ depending only on the number of legs of the $su$-path and the distance between the nodes.

\subsection{Projective Cocycle}

Now, we consider the projective cocycle associated to $f\in \mathcal{B}^N(M)$, that is $\mathbb{P}(F)$ with $F=Df\vert E^c$. We have the following results which is a consequence of Proposition 3.10 of \cite{ASV}. Here, $h^u=\mathbb{P}(H^u)$ and $H^u$ is the unstable holonomy constructed above.

\begin{prop}\label{Fu} Fix $N>0$. Let $f\in \mathcal{B}^N(M)$ and $F=Df\vert E^c$. 

For every $(x,v)\in \mathbb{P}(E^c)$ define $$\mathcal{F}^u(x,v)=\{(x,h^u_{x,y}(v))\colon y\in W^u(x)\}.$$ Then, $\mathcal{F}^u=\{\mathcal{F}^u(x,v)\colon (x,v)\in \mathbb{P}(E^c)\}$ is a partition of $\mathbb{P}(E^c)$ that satisfies the following: 
\begin{enumerate} [label=\emph{(\alph*)}]
\item There exists $C>0$ such that every $\mathcal{F}^u(x,v)$ is a $(C,\alpha)$-Holder continuous graph over $W^u(x)$.
\item The partition is invariant: $$\mathit{\mathbb{P}(F^{-1})(\mathcal{F}^u(x,v))\subset \mathcal{F}^u(\mathbb{P}(F^{-1})(x,v))} \text{ for all }(x,v)\in \mathbb{P}(E^c).$$
\item If $\mathcal{F}^u_{loc}(x,v)=\{(x,h^u_{x,y}(v))\colon y\in W^u_{loc}(x)\}$ and $(y,w)\in \mathcal{F}^u_{loc}(x,v)$, then $$\mathit{d(\mathbb{P}(F^{-n})(x,v),\mathbb{P}(F^{-n})(y,w))}\to 0 \text{ exponentially fast.}$$ 
\end{enumerate}
\end{prop}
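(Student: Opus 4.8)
The plan is to reduce all three items to the corresponding statements of Proposition 3.10 of \cite{ASV}, after checking that the hypotheses of that result are satisfied by the projective cocycle $\mathbb{P}(F)$ together with the holonomy $h^u=\mathbb{P}(H^u)$. Concretely, $\mathbb{P}(F)$ is a continuous cocycle of circle diffeomorphisms over the partially hyperbolic $f$, and $H^u$ is an invariant unstable holonomy for $F=Df\vert E^c$ in the sense of Definition \ref{holonomy} — this is exactly the content recalled from Section 3 of \cite{M}, using that $f\in\mathcal{B}^N(M)$ is $\alpha$-pinched and $\alpha$-bunched. Projectivizing, $h^u_{x,y}\colon\mathbb{P}(E^c_x)\to\mathbb{P}(E^c_y)$ inherits properties (a)--(d) of Definition \ref{holonomy}: the cocycle relation and $h^u_{x,x}=\mathrm{Id}$ are immediate from those of $H^u$; the equivariance $\mathbb{P}(F^{-1})_y\circ h^u_{x,y}=h^u_{f^{-1}(x),f^{-1}(y)}\circ\mathbb{P}(F^{-1})_x$ follows by projectivizing item (b) for $H^u$; continuity is preserved under $\mathbb{P}$; and the $(C,\eta)$-Hölder bound passes to the projectivization on the compact fiber $\mathbb{P}(E^c)$, possibly after enlarging $C$, because $\mathbb{P}$ is Lipschitz on the relevant compact sets. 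Thus $(\mathbb{P}(F),h^u)$ is a H\"older cocycle with invariant unstable holonomies over a partially hyperbolic base, which is precisely the setting of \cite[Prop.~3.10]{ASV}.

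Granting this, item (a) is the assertion that the $h^u$-saturates $\mathcal{F}^u(x,v)$ are H\"older graphs over the unstable leaves $W^u(x)$: the graph property is built into the definition $\mathcal{F}^u(x,v)=\{(x,h^u_{x,y}(v)):y\in W^u(x)\}$ since for each $y$ there is exactly one point over it, and the $(C,\alpha)$-H\"older control in the leaf is the H\"older continuity of $y\mapsto h^u_{x,y}(v)$, which is item (d) of the definition of holonomy together with the fact that the $\alpha$ from the pinching/bunching is the relevant exponent (this is the exponent appearing in \eqref{holderE} and \eqref{Cauchy}). Item (b), invariance of the partition, is the equivariance property (b) of $h^u$ rewritten at the level of the partition: if $(y,w)\in\mathcal{F}^u(x,v)$, i.e. $y\in W^u(x)$ and $w=h^u_{x,y}(v)$, then $f^{-1}(y)\in W^u(f^{-1}(x))$ and $\mathbb{P}(F^{-1})(y,w)=(f^{-1}(y),h^u_{f^{-1}(x),f^{-1}(y)}(\mathbb{P}(F^{-1})(x,v)_{\text{fiber}}))$ lies in $\mathcal{F}^u(\mathbb{P}(F^{-1})(x,v))$; the inclusion (rather than equality) is because $W^u(f^{-1}(x))$ may be strictly larger than $f^{-1}(W^u(x))$ only in the non-local sense, but in fact $f^{-1}(W^u(x))=W^u(f^{-1}(x))$, so the statement as phrased with $\subset$ is the safe formulation and follows directly.

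Item (c) is the one genuine estimate. For $(y,w)\in\mathcal{F}^u_{loc}(x,v)$, we must show $d(\mathbb{P}(F^{-n})(x,v),\mathbb{P}(F^{-n})(y,w))\to0$ exponentially. Here I would use that iterating backwards contracts local unstable leaves: $\mathrm{dist}(f^{-n}(x),f^{-n}(y))\le C\,\widehat{\chi}^n$-type bound (from the partial hyperbolicity inequalities \eqref{ph}, the unstable direction contracts under $f^{-1}$), so the base points converge exponentially fast. Then, by the H\"older estimate (d) for $h^u$ applied at the iterates $f^{-n}(x),f^{-n}(y)$ — which stay in a common local unstable leaf with exponentially shrinking diameter — the fiber distance between $h^u_{f^{-n}(x),f^{-n}(y)}$ applied to $\mathbb{P}(F^{-n})(x,v)$ and the point $\mathbb{P}(F^{-n})(x,v)$ itself is bounded by $C\,\mathrm{dist}(f^{-n}(x),f^{-n}(y))^\alpha$, hence exponentially small; and by the equivariance property (b), $\mathbb{P}(F^{-n})(y,w)$ is exactly $h^u_{f^{-n}(x),f^{-n}(y)}(\mathbb{P}(F^{-n})(x,v))$ in the fiber, so the two points are exponentially close in $\mathbb{P}(E^c)$. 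This is precisely the Hölder-cocycle contraction argument underlying \cite[Prop.~3.10]{ASV}, and there is nothing new to prove beyond invoking it.

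The main obstacle — really the only point requiring care — is verifying that projectivization preserves the quantitative Hölder estimate (d) uniformly, i.e. that the constants $C$ and the exponent $\alpha$ for $h^u$ can be extracted from those for $H^u$ without losing the exponent; this is where the bunched constant $\varsigma<1$ and the Cauchy estimate \eqref{Cauchy} enter, guaranteeing that the convergence $A_n(f,x,y)\to H^u_{x,y}$ is at a H\"older rate in $\mathrm{dist}(x,y)$ with exponent exactly $\alpha$, which then survives the (locally Lipschitz) map $\mathbb{P}$ on the compact manifold $\mathbb{P}(E^c)$. Once this uniformity is in hand, all three conclusions are immediate consequences of \cite[Prop.~3.10]{ASV}.
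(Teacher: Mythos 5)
Your proposal is correct and takes essentially the same route as the paper, which obtains the proposition directly from Proposition 3.10 of \cite{ASV} applied to the projectivized holonomies $h^u=\mathbb{P}(H^u)$ constructed in Section 3, exactly as you do after verifying the hypotheses. (One cosmetic slip: the backward contraction along local unstable leaves is governed by $\widehat{\upsilon}<1$ rather than $\widehat{\chi}$, but this does not affect the exponential convergence in item (c).)
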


In item (c) we are considering the following distance to compare points in different fibers: For every $x,y\in M$ close enough, denote $\pi_{x,y}:T_{x}M\longrightarrow T_{y}M$ the parallel transport along $\zeta$, where $\zeta$ is the geodesic satisfying $dist(x,y)=\text{length}(\zeta)$. Then, given two points $(x,v)$ and $(y,w)$ in $\mathit{\mathbb{P}(TM)}$ define $$d((x,v),(y,w))=dist(x,y) + \angle(\pi_{x,y}(v), w).$$

The following proposition is a consequence of Proposition \ref{continuity} written in the notation of Proposition \ref{Fu}. 

\begin{prop}\label{contF} Fix $N>0$. Suppose $f_k, f\in \mathcal{B}^N(M)$ and $f_k\to f$ in the $C^1$ topology. Then, for every $x\in M$ and every sequence $\{v_k\}_{k\in \mathbb{N}}$ such that $v_k\in \mathit{\mathbb{P}(E^c(x,f_k))}$ for every $k\in \mathbb{N}$ and $v_k\to v$ when $k\to \infty$, we have 
$$dist_{H}\left(\mathcal{F}^{u,k}_{loc}(x,v_k), \mathcal{F}^{u}_{loc}(x,v)\right)\to 0 \text{ when } k\to \infty.$$
Here $dist_H$ denotes the Hausdorff distance for subsets of $\mathit{\mathbb{P}(TM)}$.
\end{prop}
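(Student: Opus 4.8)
The plan is to unwind the definitions and combine the continuous dependence of the strong-unstable foliation on the diffeomorphism with the continuity of the holonomies established in Proposition \ref{continuity}. Write, with $h^{u,k}=\mathbb{P}(H^u(f_k))$ and $h^{u}=\mathbb{P}(H^u(f))$,
$$\mathcal{F}^{u,k}_{loc}(x,v_k)=\{(y,h^{u,k}_{x,y}(v_k))\colon y\in W^u_{loc}(x,f_k)\}\quad\text{and}\quad \mathcal{F}^{u}_{loc}(x,v)=\{(y,h^{u}_{x,y}(v))\colon y\in W^u_{loc}(x,f)\}.$$
Three things move as $k\to\infty$: the local unstable plaque $W^u_{loc}(x,f_k)$, the holonomy maps $h^{u,k}_{x,\cdot}$, and the fiber point $v_k$. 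The goal is to show that, once $k$ is large, every point of $\mathcal{F}^{u,k}_{loc}(x,v_k)$ lies within a prescribed distance of $\mathcal{F}^{u}_{loc}(x,v)$ and conversely, which is exactly the assertion on the Hausdorff distance.

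For the first inclusion, fix $\epsilon>0$. Since all $f_k$ lie in $\mathcal{B}^N(M)$ and $f_k\to f$ in the $C^1$ topology, they are uniformly partially hyperbolic, $\alpha$-pinched and $\alpha$-bunched with the same constants, and the strong-unstable foliations depend continuously on the diffeomorphism in the $C^1$ topology (see \cite{HPS}); concretely, for every $\delta>0$ there is $k_0$ so that for $k\ge k_0$ and every $y_k\in W^u_{loc}(x,f_k)$ there is $y\in W^u_{loc}(x,f)$ with $dist(y_k,y)<\delta$, and vice versa. Pick a lift $a\in E^c(x,f)$ of $v$ with, say, $\|a\|=1$, and apply Proposition \ref{continuity} to $f$, the point $x$ and the vector $a$, obtaining $\delta>0$ and a $C^1$ neighborhood $\mathcal{V}(f)$; by the Remark following Proposition \ref{continuity} this $\delta$ is independent of the endpoint of the holonomy in $W^u_{loc}(x,f)$. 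Since $v_k\to v$, for $k$ large we may choose lifts $b_k\in E^c(x,f_k)$ of $v_k$ with $d(a,b_k)<\delta$, and we may assume $f_k\in\mathcal{V}(f)$ and the foliation estimate above with this $\delta$. Given any $y_k\in W^u_{loc}(x,f_k)$, choose $y\in W^u_{loc}(x,f)$ with $dist(y_k,y)<\delta$; applying Proposition \ref{continuity} with $g=f_k$, $z=x$, $w=y_k$ and $b=b_k$ gives $d\big(H^u_{x,y}(f)(a),H^u_{x,y_k}(f_k)(b_k)\big)<\epsilon$. Because the holonomy maps and their inverses have uniformly bounded norm on the compact bundle $E^c$ over a $C^1$ neighborhood of $f$, the vectors $H^u_{x,y}(f)(a)$ and $H^u_{x,y_k}(f_k)(b_k)$ have norm bounded away from $0$, so projectivizing yields $d\big((y,h^u_{x,y}(v)),(y_k,h^{u,k}_{x,y_k}(v_k))\big)<\epsilon'$ in $\mathbb{P}(TM)$ for some $\epsilon'$ depending only on $\epsilon$ and $\delta$. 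Thus $(y_k,h^{u,k}_{x,y_k}(v_k))$ is $\epsilon'$-close to $\mathcal{F}^u_{loc}(x,v)$.

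The reverse inclusion is symmetric: for $y\in W^u_{loc}(x,f)$ pick $y_k\in W^u_{loc}(x,f_k)$ with $dist(y,y_k)<\delta$ and apply Proposition \ref{continuity} once more, now interchanging the roles of $f$ and $f_k$ (legitimate since $f\in\mathcal{V}(f_k)$ for $k$ large), to see that $(y,h^u_{x,y}(v))$ is $\epsilon'$-close to $\mathcal{F}^{u,k}_{loc}(x,v_k)$. Combining the two inclusions gives $dist_H\big(\mathcal{F}^{u,k}_{loc}(x,v_k),\mathcal{F}^{u}_{loc}(x,v)\big)\le\epsilon'$ for all large $k$, and since $\epsilon>0$ was arbitrary this proves the proposition.

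The only real issue is uniformity: Proposition \ref{continuity} is stated for a fixed $f$, a fixed base point $x$ and a fixed vector $a$, whereas here the base diffeomorphism, the fiber point and both endpoints of the holonomy vary at once. The Remark after Proposition \ref{continuity} handles the endpoint variable; the hypothesis $v_k\to v$, together with compactness of the fiber $\mathbb{P}(E^c_x)$, handles the fiber variable; and membership of all $f_k$ in $\mathcal{B}^N(M)$ supplies the uniform constants $C_1,C_2$ in Equations (\ref{holderE})--(\ref{Cauchy}) that make the neighborhood $\mathcal{V}(f)$ work along the whole sequence. Once these uniformities are in place the argument is a routine $\epsilon/\delta$ chase; the passage between the linear holonomy $H^u$ of Proposition \ref{continuity} and its projectivization $h^u=\mathbb{P}(H^u)$ is harmless because the relevant vectors stay in a fixed compact subset of $E^c\setminus\{0\}$, on which projectivization is Lipschitz.
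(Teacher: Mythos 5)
Your argument is correct and follows exactly the route the paper intends: the paper states Proposition \ref{contF} as a direct consequence of Proposition \ref{continuity} rewritten in the notation of Proposition \ref{Fu}, and your write-up simply supplies the routine details (continuous dependence of $W^u_{loc}$ on the diffeomorphism, the uniform choice of $\delta$ from the Remark after Proposition \ref{continuity}, and the harmless passage to the projectivization). No gap to report.
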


\section{u-states}

In this following we will always be considering $N>0$, $f\in \mathcal{B}^N(M)$ and $\nu$ an ergodic probability measure of $f$. Let $F=DF\vert E^c$ denote the center derivative cocycle associated to $f$. 

The next classical result provides a relation between the extremal Lyapunov exponents of $F$ and the invariant probability measures of its projectivization. More precisely, 

\begin{lemma}\label{fust} Define $\Phi(x,v)=\log \left\|F_x(v)\right\|$ for every $(x,v)\in \mathbb{P}(E^c)$. Then, the exponent $\lambda_{+}(F, \nu)$ coincides with the maximum of $\int{\Phi(x,v)\, dm}$ over all $\mathbb{P}(F)$-invariant probability measures $m$ projecting down to $\nu$. 
\end{lemma}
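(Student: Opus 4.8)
# Proof Proposal for Lemma \ref{fust}

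The plan is to prove this classical Furstenberg-type identity in two inequalities, exploiting ergodicity of $(f,\nu)$ together with the subadditive ergodic theorem. Write $F^n_x = F_{f^{n-1}(x)} \circ \cdots \circ F_x$. The starting observation is that for any unit vector $v \in E^c_x$ and any $\mathbb{P}(F)$-invariant measure $m$ projecting to $\nu$, Birkhoff's ergodic theorem applied to $\Phi$ and the transformation $\mathbb{P}(F)$ gives, for $m$-a.e. $(x,v)$,
\begin{equation*}
\lim_{n\to\infty} \frac{1}{n}\sum_{j=0}^{n-1} \Phi(\mathbb{P}(F)^j(x,v)) = \lim_{n\to\infty}\frac{1}{n}\log\|F^n_x(v)\| = \int \Phi\, dm,
\end{equation*}
where in the last equality I use that $(\mathbb{P}(F),m)$ need not be ergodic, so strictly this limit is the conditional expectation of $\Phi$ with respect to the $\sigma$-algebra of $\mathbb{P}(F)$-invariant sets, and integrating recovers $\int\Phi\,dm$. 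The telescoping identity $\sum_{j=0}^{n-1}\Phi(\mathbb{P}(F)^j(x,v)) = \log\|F^n_x(v)\|$ is immediate from the cocycle relation and the fact that $F_x$ acts linearly, so $\|F^n_x(v)\| = \prod_j \|F_{f^j(x)}(v_j)\|$ with $v_j$ the unit vector in the direction of $F^j_x(v)$.

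For the inequality $\int\Phi\,dm \le \lambda_+(F,\nu)$: since $\frac1n\log\|F^n_x(v)\| \le \frac1n\log\|F^n_x\|$ pointwise, and the right-hand side converges $\nu$-a.e. to $\lambda_+(F,\nu)$ by Furstenberg--Kesten, taking limits along the $m$-full-measure set above and integrating against $m$ (whose projection is $\nu$) yields $\int\Phi\,dm \le \lambda_+(F,\nu)$. This shows the maximum over all such $m$ is at most $\lambda_+(F,\nu)$.

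For the reverse inequality I need to exhibit an invariant measure $m^*$ with $\int\Phi\,dm^* = \lambda_+(F,\nu)$. The standard construction: by Oseledets, for $\nu$-a.e.\ $x$ there is a one-dimensional subspace $E^+_x \subset E^c_x$ (the most expanded Oseledets direction, or $E^c_x$ itself if both center exponents coincide) along which $\frac1n\log\|F^n_x v\| \to \lambda_+(F,\nu)$; this field of directions is measurable and $F$-invariant, hence defines a measurable section $x \mapsto (x, E^+_x)$ of $\mathbb{P}(E^c)$. Push $\nu$ forward by this section to get a probability $m^*$ on $\mathbb{P}(E^c)$; $F$-invariance of $E^+$ gives $\mathbb{P}(F)$-invariance of $m^*$, and $\pi_* m^* = \nu$. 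Then $\int \Phi\, dm^* = \int \log\|F_x(e^+_x)\|\,d\nu(x)$, and since $\frac1n\sum_{j}\log\|F_{f^j(x)}(e^+_{f^j(x)})\| = \frac1n\log\|F^n_x(e^+_x)\| \to \lambda_+(F,\nu)$ by choice of $E^+$, Birkhoff plus ergodicity of $(f,\nu)$ identifies $\int\Phi\,dm^*$ with $\lambda_+(F,\nu)$. Combined with the previous paragraph, the maximum is attained and equals $\lambda_+(F,\nu)$.

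The main obstacle is a measurability and integrability point rather than a conceptual one: one must check that $\Phi$ is $m$-integrable for every invariant $m$ (it is bounded, since $F = Df|E^c$ is continuous on the compact $M$ and $\mathbb{P}(E^c)$ is compact, so $\Phi$ is bounded and this is automatic), and that the Oseledets direction $E^+_x$ depends measurably on $x$ so that $m^*$ is genuinely a Borel probability measure — this is part of the Oseledets theorem as stated in the preliminaries. A second minor subtlety is handling the non-ergodic case for $(\mathbb{P}(F),m)$ in the first inequality; this is dispatched cleanly by working with $\limsup$ and Fatou, or by ergodic decomposition of $m$ over $\mathbb{P}(F)$, noting each ergodic component still projects to $\nu$ by ergodicity of $(f,\nu)$.
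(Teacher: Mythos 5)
The paper itself gives no proof of this lemma: it is quoted as a classical fact (Furstenberg's criterion, as in Viana's \emph{Lectures on Lyapunov exponents}), so there is no ``paper proof'' to compare against; your argument is the standard one and is essentially correct. The upper bound is fine: the telescoping identity, Birkhoff applied to the (bounded, hence integrable) function $\Phi$ for $(\mathbb{P}(F),m)$, the pointwise bound $\frac1n\log\|F^n_x(v)\|\le\frac1n\log\|F^n_x\|$, Furstenberg--Kesten together with ergodicity of $(f,\nu)$, and the fact that $\nu$-full sets lift to $m$-full sets because $\pi_*m=\nu$, give $\int\Phi\,dm\le\lambda_+(F,\nu)$; handling non-ergodicity of $m$ via the conditional expectation (or ergodic decomposition) is exactly right.

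The only soft spot is the attainment step in the degenerate case $\lambda^c_1(f,\nu)=\lambda^c_2(f,\nu)$: there the top Oseledets space is all of $E^c_x$, which is two-dimensional, so ``push $\nu$ forward by the section $x\mapsto(x,E^+_x)$'' does not define a measure on $\mathbb{P}(E^c)$, and an arbitrary measurable choice of direction inside $E^c_x$ need not be $F$-invariant. The fix is immediate and worth stating: in that case take \emph{any} $\mathbb{P}(F)$-invariant probability $m$ projecting to $\nu$ (one exists by compactness of the fibers, as recalled in Section 2); since by Oseledets $\frac1n\log\|F^n_x(v)\|\to\lambda_+(F,\nu)$ for $\nu$-a.e.\ $x$ and \emph{every} $v\neq0$, the Birkhoff limit of $\Phi$ equals $\lambda_+(F,\nu)$ $m$-a.e., hence $\int\Phi\,dm=\lambda_+(F,\nu)$. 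In the non-degenerate case your construction of $m^*$ from the one-dimensional top Oseledets direction, its $\mathbb{P}(F)$-invariance, and the identification $\int\Phi\,dm^*=\lambda_+(F,\nu)$ via Birkhoff and ergodicity of $(f,\nu)$ are all correct (the telescoping along $e^+$ works because the line field is $F$-invariant, so the sign ambiguity of the unit vector is harmless). With that one-line amendment the proof is complete.
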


This lemma shows that in order to obtain properties of $\lambda_{+}(F, \nu)$, it is natural to study the $\mathbb{P}$(F)-invariant measures. One main example in this direction is the Invariance Principle introduced by Ledrappier \cite{L} and generalized by Avila and Viana \cite{AV1} and Avila, Santamaria and Viana \cite{ASV}. Roughly speaking, it states that if $\lambda_{+}(F,\nu)=\lambda_{-}(F, \nu)$, then every $\mathbb{P}(F)$-invariant probability measure is an $su$-state. 
%Below, we give the precise statement for the setting we are interested. 

More recently, Tahzibi and Yang \cite{TY} introduced a new criterion for a $\mathbb{P}(F)$-invariant measure to be a $u$-state. This allow them to give another proof of the Invariance Principle in the case that the base is an Anosov diffeomorphism and the linear cocycle is differentiable. 

In the following, we give the precise definitions and state the result of \cite{TY}. In the next section, we use this criterion to prove that the limit of $u$-states is a $u$-state. 

%First, we recall several necessary definitions and some preliminary results.  

\subsection{Entropy along expanding foliations}

In order to state the criterion for $u$-states, we need to consider the definition of entropy along an expanding foliation. We are going to recall some basic definitions for the entropy of measurable partitions, we refer the reader to \cite{Rok1, Rok2} for more information. 

\begin{defn} A partition $\xi$ of $M$ is called \emph{measurable} if there is a sequence of finite partitions $\{\xi_n\}_{n\in \mathbb{N}}$ such that $\xi=\bigvee_{n\in \mathbb{N}} \xi_n$ and the elements of $\xi_n$ are measurable up to $\nu$-zero measure. 
\end{defn}

Given a partition $\xi$ of $M$ and $x\in M$, we denote $\xi(x)$ the element of the partition which contains the point $x$. Given $\xi$ and $\eta$ two measurable partitions, we say that $\xi<\eta$ if for $\nu$-almost every $x\in M$, $\eta(x)\subset \xi(x)$.

If $\xi$ is a measurable partition, then there exists a disintegration of $\nu$ into conditional probabilities $\left\{\nu^{\xi}_{x} : x\in M \right\}$ which is essentially unique, that is, a measurable family of probability measures such that $\nu^{\xi}_{x}(\xi(x))=1$ for almost every $x\in M$ and $$\nu(U)=\int \nu^{\xi}_{x}(U\cap \xi(x)) d\nu,$$ for every measurable set $U\subset M$. 

Let $\xi$ and $\eta$ be two measurable partitions, then for every element $B$ of $\eta$, $\xi$ induces a partition $\xi_B$ of $B$. Denote $\nu^{\eta}_{B}$ the element of the disintegration of $\nu$ relative to $\eta$ supported on $B$. The \emph{mean conditional entropy} of $\xi$ relative to $\eta$ is defined by, 
$$H_{\nu}(\xi\vert \eta)=-\int_M{\log \nu^{\eta}_{B(x)}(\xi_{B(x)}(x)) \, d\nu}.$$  
We observe that this number can be infinite. 

If the measurable partitions $\{\eta_n\}_{n\in \mathbb{N}}$ and $\eta$ verify that $\eta_n < \eta_{n+1}$ for every $n\in\mathbb{N}$ and $\eta=\bigvee_{n=1}^{\infty} \eta_n$, then we write $\eta_n\nearrow \eta$. 

The following result will be useful in the next sections. 

\begin{lemma}\label{limpart} Let $\{\eta_n\}_{n\in \mathbb{N}}$, $\eta$ and $\xi$ be measurable partitions such that $\eta_n\nearrow \eta$ and $H_{\nu}(\xi\vert \eta_1)<\infty$, then $H_{\nu}(\xi\vert \eta_n)\searrow_{n\to \infty} H_{\mu}(\xi\vert \eta)$.
\end{lemma}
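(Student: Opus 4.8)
\emph{Proof strategy.} The plan is to reduce the assertion, by first conditioning on $\eta_1$, to the classical convergence theorem for the information function of a countable partition of finite entropy along an increasing filtration, and then to invoke L\'evy's upward martingale theorem together with a maximal inequality of Chung type in order to pass to the limit under the integral. I would start with the monotonicity. Because $\eta_n<\eta_{n+1}$, the standard inequality $H_\nu(\xi\mid\beta)\le H_\nu(\xi\mid\alpha)$, valid whenever $\alpha<\beta$, shows that $n\mapsto H_\nu(\xi\mid\eta_n)$ is non-increasing; applying the same inequality to $\eta_n<\eta$ gives $H_\nu(\xi\mid\eta_n)\ge H_\nu(\xi\mid\eta)$ for every $n$. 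Combined with $H_\nu(\xi\mid\eta_1)<\infty$, this shows that $L:=\lim_n H_\nu(\xi\mid\eta_n)$ exists, is finite, and satisfies $L\ge H_\nu(\xi\mid\eta)$; it therefore remains to prove the reverse inequality $L\le H_\nu(\xi\mid\eta)$.

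The next step is to disintegrate along $\eta_1$. For $\nu$-a.e.\ $x$ set $B=\eta_1(x)$ and $m_B=\nu^{\eta_1}_B$. Since $\eta_n\ge\eta_1$ and $\eta\ge\eta_1$, transitivity of conditional measures gives $\nu^{\eta_n}_x=(m_B)^{(\eta_n)_B}_x$ and $\nu^{\eta}_x=(m_B)^{\eta_B}_x$, where $(\eta_n)_B$ and $\eta_B$ denote the partitions induced on $B$. Integrating, $H_\nu(\xi\mid\eta_n)=\int H_{m_{B(x)}}\big(\xi_{B(x)}\mid(\eta_n)_{B(x)}\big)\,d\nu(x)$, the analogous identity holds for $\eta$, and $\int H_{m_{B(x)}}(\xi_{B(x)})\,d\nu(x)=H_\nu(\xi\mid\eta_1)<\infty$. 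In particular, for $\nu$-a.e.\ $x$ the partition $\xi_B$ has finite $m_B$-entropy, hence modulo an $m_B$-null set it is a countable partition into atoms of positive measure, while $(\eta_n)_B\nearrow\eta_B$. This reduces the lemma to the fibrewise statement: on a probability space $(B,m_B)$, if $\zeta$ is a countable partition with $H_{m_B}(\zeta)<\infty$ and $\gamma_n\nearrow\gamma$, then $H_{m_B}(\zeta\mid\gamma_n)\to H_{m_B}(\zeta\mid\gamma)$.

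For the fibrewise statement, let $\mathcal G_n\uparrow\mathcal G$ be the completed $\sigma$-algebras generated by $\gamma_n$ and $\gamma$, and let $I_n=-\log E_{m_B}[\mathbf 1_{\zeta(\cdot)}\mid\mathcal G_n]$ and $I=-\log E_{m_B}[\mathbf 1_{\zeta(\cdot)}\mid\mathcal G]$ be the corresponding conditional information functions, so that $H_{m_B}(\zeta\mid\gamma_n)=\int I_n\,dm_B$ and $H_{m_B}(\zeta\mid\gamma)=\int I\,dm_B$. L\'evy's upward theorem gives $I_n\to I$ $m_B$-a.e., and Chung's maximal inequality for the information function of a countable partition of finite entropy gives $\int\sup_n I_n\,dm_B\le H_{m_B}(\zeta)+c$ for a universal constant $c$; dominated convergence then yields $H_{m_B}(\zeta\mid\gamma_n)\to H_{m_B}(\zeta\mid\gamma)$. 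Integrating this fibrewise convergence over $x$ — which is legitimate since it holds for $\nu$-a.e.\ $x$ and $H_{m_{B(x)}}(\xi_{B(x)}\mid(\eta_n)_{B(x)})\le H_{m_{B(x)}}(\xi_{B(x)})$ is an integrable majorant — gives $H_\nu(\xi\mid\eta_n)\to H_\nu(\xi\mid\eta)$, i.e.\ $L=H_\nu(\xi\mid\eta)$, which together with the first paragraph finishes the proof.

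The step I expect to be the main obstacle is the interchange of limit and integral in the fibrewise argument, because the information functions $I_n$ are unbounded; this is precisely what the Chung-type maximal inequality is designed to control, and the preliminary reduction to a countable partition of finite entropy — obtained by conditioning on $\eta_1$ first — is exactly what makes that inequality applicable in the present generality of measurable (possibly uncountable) partitions.
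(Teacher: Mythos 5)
The paper offers no proof of this lemma to compare with: it is stated as a classical fact from Rokhlin's theory of measurable partitions and conditional entropy (the reader is implicitly sent to \cite{Rok1,Rok2}). Your argument is a correct, self-contained proof along the standard lines. The monotonicity step gives the existence and finiteness of the limit; the disintegration over $\eta_1$ is exactly the device that turns the general measurable partition $\xi$ into a fibrewise countable partition of finite entropy; and on each fiber the combination of L\'evy's upward martingale theorem with the Chung-type maximal inequality $\int \sup_n I_n\, dm_B \le H_{m_B}(\xi_B)+c$ legitimately justifies dominated convergence, after which the integrable majorant $H_{m_{B(x)}}(\xi_{B(x)})$ lets you pass the fibrewise limit through the integral over the base. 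So what you buy, compared with the paper's bare citation, is an actual proof valid in the generality of measurable (possibly uncountable) partitions, which is the generality in which the lemma is used.

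Three facts you invoke silently deserve at least a reference, though none is a genuine gap since all are standard for Lebesgue spaces: (i) transitivity of disintegrations, i.e.\ that $\nu^{\eta_n}_x$ is obtained by disintegrating $\nu^{\eta_1}_{B(x)}$ over $(\eta_n)_{B(x)}$, which underlies your identity $H_\nu(\xi\vert\eta_n)=\int H_{m_{B(x)}}\bigl(\xi_{B(x)}\vert(\eta_n)_{B(x)}\bigr)\,d\nu(x)$; (ii) that for $\nu$-almost every element $B$ of $\eta_1$ one has $\bigvee_n(\eta_n)_B=\eta_B$ modulo $m_B$-null sets, i.e.\ restriction to fibers commutes with countable joins mod $0$, which rests on the correspondence between measurable partitions and complete sub-$\sigma$-algebras; and (iii) the identification of $m_B^{(\eta_n)_B}$ evaluated on a fixed atom of $\xi_B$ with the conditional expectation relative to the completed $\sigma$-algebra generated by $(\eta_n)_B$, which is what makes L\'evy's theorem and the maximal inequality applicable and which also guarantees $I<\infty$ almost everywhere on each atom. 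With those points referenced or briefly justified, your proof is complete and correct.
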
 

Recall that we are considering $N>0$, $f\in \mathcal{B}^N(M)$ and $\nu$ an $f$-invariant probability measure.

\begin{defn} The entropy of $f$ relative to a measurable partition $\xi$ is given by $$h_{\nu}(f,\xi)=H_{\nu}(\xi\vert \xi^{+}),$$
where $\xi^{+}=\bigvee_{n=0}^{\infty} f^n\xi$. 
\end{defn}

\begin{defn} A measurable partition is said to be \emph{increasing} if $f\xi<\xi$. In this case, $h_{\nu}(f,\xi)=H_{\nu}(\xi\vert f\xi)$. 
\end{defn}

\begin{defn}\label{subo} We say a measurable partition $\xi$ is $\nu$-subordinated to the foliation $W^u$ if for $\nu$-almost every $x\in M$:
\begin{enumerate} [label=\emph{(\alph*)}]
\item $\xi(x)\subset W^u(x)$ and $\xi(x)$ has uniformly small diameter inside $W^u(x)$,
\item $\xi(x)$ contains an open neighborhood of $x$ inside the leaf $W^u(x)$ and 
\item $f\xi<\xi$ (increasing partition). 
\end{enumerate}
\end{defn}

By Lemma 3.1.2 of \cite{LY1}, given $\xi_1$ and $\xi_2$, two measurable partitions which are $\nu$-subordinated to the foliation $W^u$, we have $h_{\nu}(f,\xi_1)=h_{\nu}(f,\xi_2)$ and this quantity is finite. Therefore, we can define the entropy of $f$ along the unstable foliation by $h_{\nu}(f,W^u)=h_{\nu}(f,\xi)$ for some measurable partition $\xi$ which is $\nu$-subordinated to $W^u$. 

Analogously, if $F=Df\vert E^c$, $m$ is a $\mathbb{P}(F)$-invariant probability measure and $\mathcal{F}^u$ is given by Proposition \ref{Fu}, we can define $h_{m}(\mathbb{P}(F),\mathcal{F}^u)$.  

The following result is a consequence of Section 4 and 5 of \cite{TY}. They proved the next criterion in the case that the base is a $C^2$ Anosov diffeomorphism. However, their proof can be adapted to this setting without major changes. See also Section 6 of \cite{VY}.  

\begin{theorem}\label{criterium} 
Fix $N>0$, $f\in \mathcal{B}^N(M)$ and $\nu$ an $f$-invariant probability measure. Let $F=Df\vert E^c$, $\mathcal{F}^u$ given by Proposition \ref{Fu} and $m$ a $\mathbb{P}(F)$-invariant probability measure projecting down to $\nu$. Then, 
$$h_{m}(\mathbb{P}(F), \mathcal{F}^u)\leq h_{\nu}(f,W^u),$$ and equality holds if and only if $m$ is a $u$-state. 
\end{theorem}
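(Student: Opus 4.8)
The plan is to reduce the statement to the criterion of Tahzibi--Yang \cite{TY}, checking that every structural ingredient used there survives in the present setting, where the base is a diffeomorphism in $\mathcal{B}^N(M)$ rather than a $C^2$ Anosov map and the cocycle is the (only H\"older) projective derivative cocycle $\mathbb{P}(F)$. The scheme there is: (i) build, over the expanding foliation $W^u$ of $f$, a measurable partition $\xi$ which is $\nu$-subordinated to $W^u$, and lift it to a partition $\widehat\xi$ of $\mathbb{P}(E^c)$ which is $m$-subordinated to $\mathcal{F}^u$ (using the holonomy graphs of Proposition \ref{Fu} to make the lift); (ii) compare $h_m(\mathbb{P}(F),\mathcal{F}^u)=H_m(\widehat\xi\mid f\widehat\xi)$ with $h_\nu(f,W^u)=H_\nu(\xi\mid f\xi)$ by disintegrating $m$ along the fibers of the projection $\mathbb{P}(E^c)\to M$ and then along $\mathcal{F}^u$, obtaining $h_m(\mathbb{P}(F),\mathcal{F}^u)= h_\nu(f,W^u) - \big(\text{a nonnegative "fiber entropy" term}\big)$; (iii) show that the fiber term vanishes if and only if the disintegration of $m$ along $\mathcal{F}^u$ is (essentially) invariant under $\mathcal{F}^u$-holonomy, which is exactly the $u$-state condition. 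I would carry out these three steps in order.

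For step (i), the key geometric inputs are precisely the properties listed in Proposition \ref{Fu}: $\mathcal{F}^u$ is a partition into $(C,\alpha)$-H\"older graphs over the leaves of $W^u$, it is invariant under $\mathbb{P}(F^{-1})$, and points in a common local plaque $\mathcal{F}^u_{loc}$ contract under backward iteration; these replace, respectively, the Lipschitz/H\"older regularity, invariance, and contraction of the unstable-holonomy foliation of a differentiable cocycle over an Anosov base that \cite{TY} use. Since $f$ is partially hyperbolic with uniformly contracting $W^s$ and expanding $W^u$, the leaf geometry of $W^u$ and the existence of $\nu$-subordinated increasing partitions is standard (Ledrappier--Young \cite{LY1}), so one can take $\xi$ as there and set $\widehat\xi(x,v)=\{(y,h^u_{x,y}(v)): y\in\xi(x)\}$; the H\"older regularity of the graphs guarantees $\widehat\xi$ is measurable and $m$-subordinated to $\mathcal{F}^u$, and $\mathbb{P}(F)\widehat\xi<\widehat\xi$ follows from $f\xi<\xi$ together with holonomy invariance (item (b) of Proposition \ref{Fu}). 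I would also need the analogue of Lemma 3.1.2 of \cite{LY1} for $\mathbb{P}(F)$ to see that $h_m(\mathbb{P}(F),\mathcal{F}^u)$ is well-defined independently of the choice of subordinated partition and is finite — this follows because $\mathbb{P}(F)$ expands along $\mathcal{F}^u$ at a rate dominated by the $W^u$-expansion of $f$, using the bound (\ref{lyap2}) on the projective Lyapunov exponents.

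For steps (ii) and (iii), the heart is the Rokhlin-type entropy formula: writing $\pi\colon\mathbb{P}(E^c)\to M$, disintegrate $m$ as $dm=\int m_x\, d\nu(x)$ along $\pi$, and observe that refining by $\pi^{-1}(\xi)$ and by $\widehat\xi$ gives the cocycle-type identity
\begin{equation*}
H_m(\widehat\xi\mid f\widehat\xi) = H_\nu(\xi\mid f\xi) - \big[\,H_m(\pi^{-1}(\xi)\mid \widehat\xi\vee f\pi^{-1}(\xi)) - H_m(f\pi^{-1}(\xi)\mid f\widehat\xi)\,\big],
\end{equation*}
where the bracketed difference is $\geq 0$ by monotonicity of conditional entropy under refinement (here one uses $\mathbb{P}(F)\widehat\xi<\widehat\xi$ and invariance of $\pi^{-1}(\xi)$ under the skew product), yielding the inequality $h_m(\mathbb{P}(F),\mathcal{F}^u)\leq h_\nu(f,W^u)$. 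Equality forces the bracket to be zero, which by the equality case of the refinement inequality means that, along $m$-a.e. $\mathcal{F}^u$-plaque, the conditional measures of $m$ on the fibers are carried between fibers by the $h^u$-holonomy; unwinding this over an increasing sequence $\widehat\xi_n\nearrow$ (the full $\mathcal{F}^u$-foliation) via Lemma \ref{limpart} gives essential $u$-invariance of the disintegration $\{m_x\}$, i.e. $m$ is a $u$-state; the converse direction is the easy check that a $u$-state makes the bracket vanish. I expect the main obstacle to be the bookkeeping in step (i)–(ii) caused by the lack of smoothness of $\mathcal{F}^u$: one must make sure that all the measurable-partition manipulations (especially the interchange of $\pi^{-1}(\xi)$, $\widehat\xi$, and their $f$-images, and the passage to the limit) are valid when the leaves are only H\"older, and that $H_\nu(\xi\mid f\xi)$ and $H_m(\widehat\xi\mid f\widehat\xi)$ are both finite so the subtraction in the displayed identity is legitimate — but this is exactly the point where Propositions \ref{Fu} and the pinching/bunching hypotheses of $\mathcal{B}^N(M)$ do the work that smoothness does in \cite{TY}, so no genuinely new idea beyond careful adaptation should be required.
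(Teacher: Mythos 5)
Your high-level plan coincides with what the paper actually does for Theorem \ref{criterium}: the paper gives no proof, but invokes Sections 4 and 5 of \cite{TY} (and Section 6 of \cite{VY}) and asserts that the argument adapts to the H\"older cocycle $\mathbb{P}(F)$ over $f\in\mathcal{B}^N(M)$; your step (i) — lifting a $\nu$-subordinated partition $\xi$ of $W^u$ to a partition $\widehat\xi$ of $\mathbb{P}(E^c)$ by the holonomy graphs of Proposition \ref{Fu} — is exactly the intended adaptation. The problem is your step (ii), which is where the entire content of the theorem sits, and your displayed identity is wrong as stated. Every element of $\widehat\xi$ is a holonomy graph over an element of $\xi$, so $\widehat\xi$ refines $\pi^{-1}(\xi)$ and $\mathbb{P}(F)\widehat\xi$ refines $\mathbb{P}(F)\pi^{-1}(\xi)$; consequently both bracketed terms, $H_m(\pi^{-1}(\xi)\mid \widehat\xi\vee \mathbb{P}(F)\pi^{-1}(\xi))$ and $H_m(\mathbb{P}(F)\pi^{-1}(\xi)\mid \mathbb{P}(F)\widehat\xi)$, vanish identically (conditioning a coarser partition on a finer one gives zero). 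Your formula then asserts $h_m(\mathbb{P}(F),\mathcal{F}^u)=h_\nu(f,W^u)$ unconditionally, contradicting the ``only if'' half of the theorem, and the claimed ``nonnegative fiber entropy term'' is not produced by this decomposition.

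If one instead writes the two legitimate decompositions of $H_m(\widehat\xi\mid \mathbb{P}(F)\pi^{-1}(\xi))$, one gets $H_m(\widehat\xi\mid \mathbb{P}(F)\widehat\xi)=H_\nu(\xi\mid f\xi)+H_m(\widehat\xi\mid\pi^{-1}(\xi))-H_m(\mathbb{P}(F)\widehat\xi\mid \mathbb{P}(F)\pi^{-1}(\xi))$, and $\mathbb{P}(F)$-invariance of $m$ makes the last two terms formally equal; but the restriction of $\widehat\xi$ to a tube $\pi^{-1}(\xi(x))$ is a continuum of graphs, so whenever the fiber conditionals $m_x$ are non-atomic these terms are $+\infty$ and the cancellation is illegitimate. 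Your finiteness caveat is therefore aimed at the wrong quantities: $H_\nu(\xi\mid f\xi)$ and $H_m(\widehat\xi\mid \mathbb{P}(F)\widehat\xi)$ are indeed finite, but the auxiliary conditional entropies in the bracket are not (or are zero, depending on how one reads your formula). This $\infty-\infty$ obstruction is precisely why the criterion is nontrivial: the proof in \cite{TY} (and the related arguments in Section 6 of \cite{VY}, in \cite{Y}, and in Section 5 of this paper) avoids any such naive three-term cancellation, controlling the defect $h_\nu(f,W^u)-h_m(\mathbb{P}(F),\mathcal{F}^u)$ by comparing the conditional measures $m_y$ with the holonomy transports $(h^u_{x,y})_*m_x$ through finite (or finite-entropy) approximating partitions, martingale convergence as in Lemma \ref{limpart}, and a Jensen-type relative-entropy inequality whose equality case is exactly essential $u$-invariance. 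Without that ingredient your steps (ii)--(iii) establish neither the inequality nor the characterization of equality, so the proposal has a genuine gap at its core, even though step (i) and the final identification of the equality case with the $u$-state property are in the right spirit.
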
 

\section{Limit of $u$-states}

The main result of this section is the next proposition which states that the limit of $u$-states is a $u$-state. This has already been proved in several contexts, for example, see Proposition 5.17 of \cite{V} for a proof for locally constant cocycles and Lemma 4.3 of \cite{BBB} and Corollary 2.3 of \cite{TY} for linear cocycles over hyperbolic maps. For linear cocycles over partially hyperbolic diffeomorphisms, it has been stated in Corollary 5.3 of \cite{AV1} and a detailed proof can be found in Appendix A of \cite{Mau}. For the case of the derivative cocycle and the Lebegue measure, the classical argument has been recently adapted in \cite{OP}. In order to be able to extend the proof to general measures $\nu_k$, Remark 8.6 of \cite{OP} establishes conditions on the pullback of $\nu_k$ by foliation charts. The hypothesis on the measures in our proposition is more clear from a dynamical point of view. 

Although we chose to present our result for the derivative cocycle of diffeomorphisms in $\mathcal{B}^N(M)$, the argument also works in the easier case of linear cocycles with a fixed base giving another proof of the known results. The hypothesis of $\dim E^c=2$ can also be removed. 

\begin{prop}\label{limitugral}
Fix $N>0$. Let $f_k,f\in \mathcal{B}^N(M)$ for every $k\in \mathbb{N}$, $f_k\to f$ in the $C^1$ topology, $F=Df\vert E^c$ and $F_k=Df_k\vert E^c(f_k)$. 

Suppose $\nu_k$ is an $f_k$-invariant probability measure for every $k\in \mathbb{N}$ and $\nu$ is an $f$-invariant probability measure such that $\nu_k\to \nu$ in the weak$^*$ topology and 
\begin{equation*}
\lim_{k\to \infty} h_{\nu_k}(f_k,W^u_k)=h_{\nu}(f,W^u)
\end{equation*}

If $m_k$ is a $u$-state for $\mathbb{P}(F_k)$ and $\nu_k$ and $m_k\to m$ in the weak$^{*}$ topology, then $m$ is a $u$-state for $\mathbb{P}(F)$ and $\nu$.  
\end{prop}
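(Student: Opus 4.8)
The plan is to use the entropy criterion for $u$-states, Theorem~\ref{criterium}, together with an upper semicontinuity statement for the partial entropy of the projective cocycle. The key inequality I want to establish is
\begin{equation*}
h_{\nu}(f,W^u) \;=\; \lim_{k\to\infty} h_{\nu_k}(f_k,W^u_k) \;=\; \lim_{k\to\infty} h_{m_k}(\mathbb{P}(F_k),\mathcal{F}^{u,k}) \;\leq\; h_{m}(\mathbb{P}(F),\mathcal{F}^u) \;\leq\; h_{\nu}(f,W^u),
\end{equation*}
where the first equality is the hypothesis, the second equality holds because each $m_k$ is a $u$-state for $\mathbb{P}(F_k)$ and $\nu_k$ (Theorem~\ref{criterium} applied to $f_k$), the last inequality is again Theorem~\ref{criterium} applied to $f$, and the crucial middle inequality is the upper semicontinuity of the partial entropy of the projective cocycle along $\mathcal{F}^u$. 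Once the chain of inequalities collapses to an equality, $h_{m}(\mathbb{P}(F),\mathcal{F}^u)=h_{\nu}(f,W^u)$, and the equality case of Theorem~\ref{criterium} gives that $m$ is a $u$-state for $\mathbb{P}(F)$ and $\nu$, which is what we want. (Strictly, I should first check that $m$ projects down to $\nu$: this is automatic because $m_k\to m$, $\nu_k\to\nu$ weak$^*$, and projection onto $M$ is continuous, and that $m$ is $\mathbb{P}(F)$-invariant, which follows from $\mathbb{P}(F_k)\to\mathbb{P}(F)$ uniformly — a consequence of $f_k\to f$ in $C^1$ together with continuity of the center bundle.)

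The heart of the argument is therefore the upper semicontinuity
\begin{equation*}
\limsup_{k\to\infty} h_{m_k}(\mathbb{P}(F_k),\mathcal{F}^{u,k}) \;\leq\; h_{m}(\mathbb{P}(F),\mathcal{F}^u),
\end{equation*}
and this is where I expect the main difficulty to lie. The cocycle $\mathbb{P}(F)$ is only H\"older (not $C^1$) in the fibers, so I cannot cite Yang's theorem from \cite{Y} as a black box; instead I would adapt its proof. The strategy is to build, near $f$, a single measurable partition $\mathcal{P}$ of $\mathbb{P}(E^c)$ which is simultaneously subordinate to $\mathcal{F}^u$, to $\mathcal{F}^{u,k}$ for all large $k$, and has boundary of zero measure for $m$ and for all $m_k$ (up to small error); using Proposition~\ref{contF}, the local leaves $\mathcal{F}^{u,k}_{loc}(x,v_k)$ converge in Hausdorff distance to $\mathcal{F}^u_{loc}(x,v)$, which is exactly the ingredient needed to transport such a partition from $f$ to $f_k$. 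Then $h_{m_k}(\mathbb{P}(F_k),\mathcal{F}^{u,k}) \le H_{m_k}(\mathcal{P}\mid \mathbb{P}(F_k)\mathcal{P})$ up to an error that shrinks as the partition is refined, and the right-hand side passes to the limit by weak$^*$ convergence $m_k\to m$ together with the fact that $\mathbb{P}(F_k)\mathcal{P}\to\mathbb{P}(F)\mathcal{P}$ in a suitable sense. Taking $k\to\infty$ first and then refining $\mathcal{P}$ yields the bound. I would lean on Lemma~\ref{limpart} to control the errors introduced by passing to finer partitions, and on the uniform H\"older constant $C$ from Proposition~\ref{Fu}(a) to get the geometric estimates uniformly over $k$ in a $C^1$ neighborhood of $f$.

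A couple of technical points deserve care. First, the partial entropy $h_{m}(\mathbb{P}(F),\mathcal{F}^u)$ is defined via a partition subordinate to $\mathcal{F}^u$, and the Ledrappier–Young-type fact that this does not depend on the choice of subordinate partition (the analog of Lemma~3.1.2 of \cite{LY1}) must hold uniformly enough that the approximating partitions $\mathcal{P}$ I construct compute the same number in the limit; this is where the H\"older (rather than smooth) regularity of the cocycle requires the adjustment alluded to in the introduction, and I would follow the scheme of \cite{TY}, Section~6 of \cite{VY}, and \cite{Y}. Second, the convergence $h_{\nu_k}(f_k,W^u_k)\to h_{\nu}(f,W^u)$ is an assumption, not something to prove, so the base dynamics side is handled; but I should still verify that the partitions I use on the base side (to invoke Theorem~\ref{criterium} for each $f_k$) can be taken compatibly, which again uses the continuous dependence of the strong-unstable foliation on $f$ in the $C^1$ topology. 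With these pieces in place the proposition follows as outlined.
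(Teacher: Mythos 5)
Your proposal is correct and follows essentially the same route as the paper: reduce to the Tahzibi--Yang criterion (Theorem \ref{criterium}) plus the upper semicontinuity $\limsup_k h_{m_k}(\mathbb{P}(F_k),\mathcal{F}^u_k)\leq h_m(\mathbb{P}(F),\mathcal{F}^u)$, proved by adapting Yang's argument with uniformly constructed partitions, Lemma \ref{limpart}, Proposition \ref{contF}, and weak$^*$ convergence against finite partitions with negligible boundaries. The only wording to adjust is that one cannot have a single partition literally subordinate to both $\mathcal{F}^u$ and $\mathcal{F}^{u,k}$; as in the paper, one fixes a common finite partition (with common cross-sections) and intersects it with each foliation separately, obtaining the $k$-dependent subordinate partitions whose finite truncations then converge as you describe.
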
 

First, we observe that since $m_k$ is a $\mathbb{P}(F_k)$-invariant probability measure projecting down to $\nu_k$ for every $k\in \mathbb{N}$, then the limit measure $m$ satisfies the following: 

\begin{enumerate}[label=(\alph*)]
\item $\mathit{supp}\; m \subset \mathbb{P}(E^c(f))$,
\item $m$ projects down to $\nu$ and
\item $m$ is $\mathbb{P}(F)$-invariant.
\end{enumerate}

Moreover, since $m_k$ is a $u$-state for $\mathbb{P}(F_k)$, by Theorem \ref{criterium}, we have for every $k\in \mathbb{N}$,
\begin{equation}\label{Parte1}
h_{m_k}(\mathbb{P}(F_k), \mathcal{F}^{u}_k)=h_{\nu_k}(f_k, W^{u}_k).
\end{equation}

In order to conclude Proposition \ref{limitugral}, it is enough to prove: 

\begin{prop}\label{limitsup}
Fix $N>0$. Let $f_k,f\in \mathcal{B}^N(M)$ for every $k\in \mathbb{N}$, $f_k\to f$ in the $C^1$ topology, $F=Df\vert E^c$ and $F_k=Df_k\vert E^c(f_k)$. 

Suppose $\nu_k$ is an $f_k$-invariant probability measure for every $k\in \mathbb{N}$ and $\nu$ is an $f$-invariant probability measure such that $\nu_k\to \nu$ in the weak$^*$ topology.

If $m_k$ is a $\mathbb{P}(F_k)$-invariant probability measure projecting down to $\nu_k$ and $m_k\to m$ in the weak$^{*}$ topology, then
\begin{equation*}
\limsup_{k\to \infty} h_{m_k}(\mathbb{P}(F_k), \mathcal{F}^{u}_k)\leq h_{m}(\mathbb{P}(F), \mathcal{F}^{u}).
\end{equation*}
\end{prop}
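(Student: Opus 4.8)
The plan is to establish the upper semicontinuity of the partial entropy $h_{m_k}(\mathbb{P}(F_k),\mathcal{F}^u_k)$ by adapting the argument of Yang \cite{Y}, which treats the $C^1$ case for diffeomorphisms, to the present setting where the fibered dynamics $\mathbb{P}(F_k)$ is only H\"older continuous but the geometry of the partitions $\mathcal{F}^u_k$ is controlled uniformly by Proposition \ref{Fu}. The first step is to fix a good system of measurable partitions. Using Proposition \ref{Fu}, each $\mathcal{F}^u_k$ is a partition of $\mathbb{P}(E^c(f_k))$ into $(C,\alpha)$-H\"older graphs over unstable leaves of $f_k$, with $C$ and $\alpha$ uniform in a $C^1$-neighborhood of $f$. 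I would build, for a small radius $\rho>0$, a measurable partition $\xi_k$ of $\mathbb{P}(E^c(f_k))$ that is $m_k$-subordinated to $\mathcal{F}^u_k$ in the sense analogous to Definition \ref{subo}, with the boundary of $\xi_k$ chosen inside a fixed finite family of foliation boxes whose $m_k$-measure of the boundary is zero (possible for all but countably many $\rho$, uniformly in $k$ after passing to a subsequence); then $h_{m_k}(\mathbb{P}(F_k),\mathcal{F}^u_k)=H_{m_k}(\xi_k\mid \mathbb{P}(F_k)\xi_k)$.

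The second step is a finite-time approximation. For a fixed large $n$, one has $h_{m_k}(\mathbb{P}(F_k),\mathcal{F}^u_k)\le \tfrac1n H_{m_k}\bigl(\xi_k \mid \mathbb{P}(F_k^n)\xi_k\bigr)$, and the right-hand side can be rewritten, via the Rokhlin formula for the conditional entropy of a refinement, as an average of $-\log$ of conditional measures of dynamically defined plaques. The key point is that $\mathbb{P}(F_k^n)$ converges uniformly to $\mathbb{P}(F^n)$ on $\mathbb{P}(TM)$ because $f_k\to f$ in $C^1$ and, by Proposition \ref{contF}, $\mathcal{F}^{u,k}_{loc}\to \mathcal{F}^u_{loc}$ in the Hausdorff distance; hence the partition $\mathbb{P}(F_k^n)\xi_k$ converges (in the appropriate sense of convergence of partitions inside a fixed finite collection of boxes) to $\mathbb{P}(F^n)\xi$. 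Combined with the weak$^*$ convergence $m_k\to m$ and the zero-boundary condition on the boxes, this yields
\begin{equation*}
\limsup_{k\to\infty} H_{m_k}\bigl(\xi_k\mid \mathbb{P}(F_k^n)\xi_k\bigr)\le H_m\bigl(\xi\mid \mathbb{P}(F^n)\xi\bigr).
\end{equation*}
Dividing by $n$ and letting $n\to\infty$ through the subsequence (using that $\tfrac1n H_m(\xi\mid \mathbb{P}(F^n)\xi)\to h_m(\mathbb{P}(F),\mathcal{F}^u)$ by Lemma \ref{limpart} and the subadditivity of $n\mapsto H_m(\xi\mid \mathbb{P}(F^n)\xi)$) gives the desired inequality; since every subsequence has a further subsequence along which the bound holds, the full $\limsup$ is controlled.

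The main obstacle I anticipate is the third step: controlling the conditional measures $m_{k,x}$ of $m_k$ along the plaques of $\xi_k$ uniformly in $k$, i.e. ruling out that mass escapes to the boundaries of plaques or concentrates on thin sets as $k\to\infty$. This is where the H\"older (rather than $C^1$) regularity of $\mathbb{P}(F_k)$ matters, since one cannot invoke the smooth distortion estimates of \cite{Y} directly; instead I would use the uniform $(C,\alpha)$-H\"older control of the graphs $\mathcal{F}^u_k$ from Proposition \ref{Fu}(a) together with the uniform exponential contraction of $\mathbb{P}(F_k^{-n})$ along local plaques from Proposition \ref{Fu}(c) to get an equicontinuity/tightness statement for the family of disintegrations, and then a standard argument (as in the proof of upper semicontinuity of entropy in \cite{Y}, or in Section 6 of \cite{VY}) to pass the $\limsup$ inside the integral defining $H_{m_k}$. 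A secondary technical point is to ensure that the finitely many foliation boxes and their zero-$m$-boundary property can be chosen compatibly for $f$ and for all $f_k$ simultaneously; this is handled by working in a single chart system on $M$ in which $E^c(f_k)$ and the plaques of $\mathcal{F}^u_k$ all vary continuously, which is legitimate because partial hyperbolicity, the bundles $E^c$, and the holonomies all depend continuously on the diffeomorphism in $\mathcal{B}^N(M)$ by Section 3.
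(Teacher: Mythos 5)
There is a genuine gap at what you yourself flag as the ``main obstacle'', and it is not a technicality that the rest of your plan can absorb. Your scheme asks for
$\limsup_k H_{m_k}\bigl(\xi_k\mid \mathbb{P}(F_k^n)\xi_k\bigr)\le H_m\bigl(\xi\mid \mathbb{P}(F^n)\xi\bigr)$
where $\xi_k,\xi$ are \emph{measurable} (uncountable) partitions subordinated to $\mathcal{F}^u_k,\mathcal{F}^u$. Conditional entropy with respect to such partitions is computed from the disintegrations $\{m_{k,x}\}$ along plaques, and weak$^*$ convergence $m_k\to m$ gives essentially no control on these disintegrations: they need not converge, and no semicontinuity of $H_{m_k}(\cdot\mid\cdot)$ holds at this level. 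Your proposed fix --- equicontinuity/tightness of the disintegrations from the uniform H\"older bound on the graphs and the contraction in Proposition \ref{Fu}(c) --- is exactly the kind of statement that fails in general (convergence of disintegrations along unstable plaques is closely related to the $u$-state property one is trying to establish, not something one can assume), and since $\mathbb{P}(F_k)$ is only H\"older there is no distortion estimate to substitute for it. So the step that carries the whole weight of the argument is missing.

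The paper's proof is organized precisely so as never to compare disintegrations. It fixes one finite partition $\widehat{\mathcal{A}}$ (with $\nu$- and $\nu_k$-small boundaries), sets $\widehat{\mathcal{A}}^u$ to be its refinement by local $\mathcal{F}^u$-plaques, and then builds genuinely \emph{finite} partitions $\widehat{\mathcal{A}}^n_l$ (and $\widehat{\mathcal{A}}^n_{k,l}$ for $f_k$) out of cross-sections $D_i$ and fiber intervals. Two monotone approximations (Propositions \ref{infn} and \ref{relac}, both via Lemma \ref{limpart}) show that $\frac1n H_m(\widehat{\mathcal{A}}^n_l\mid\widehat{\mathcal{A}}_l)$ decreases, in $l$ and then in $n$, to $h_m(\mathbb{P}(F),\mathcal{F}^u)$; crucially the same monotonicity gives the inequality (\ref{tres}), $H_{m_k}(\widehat{\mathcal{A}}^n_{k,l}\mid\widehat{\mathcal{A}}_{k,l})\ge n\,h_{m_k}(\mathbb{P}(F_k),\mathcal{F}^u_k)$, valid for \emph{every} $k,n,l$. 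With this uniform lower bound in hand, the only limit in $k$ one needs is the convergence of conditional entropies of \emph{finite} partitions, $H_{m_k}(\widehat{\mathcal{A}}^{n}_{k,l}\mid\widehat{\mathcal{A}}_{k,l})\to H_m(\widehat{\mathcal{A}}^{n}_{l}\mid\widehat{\mathcal{A}}_{l})$, which does follow from weak$^*$ convergence together with the zero-boundary property and the Hausdorff convergence of plaques (Proposition \ref{contF}) --- i.e.\ only measures of finitely many sets are involved. Your draft has the right ingredients (finite-time reduction, zero-boundary boxes, Proposition \ref{contF}), but to close it you would have to replace the limit of conditional entropies over subordinated partitions by this kind of finite-partition over-approximation; as written, the argument does not go through.
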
 

\begin{obs} We remark that the above proposition holds for every sequence of measures $m_k$ which are $\mathbb{P}(F_k)$-invariant probability measures, not necessarily being $u$-states. 
\end{obs} 

If the above proposition is true, then using Equation (\ref{Parte1}), we will have $$\limsup_{k\to \infty} h_{\nu_k}(f_k, W^{u}_k)\leq h_{m}(\mathbb{P}(F), \mathcal{F}^{u}).$$ Then, the hypotheses on $h_{\nu_k}(f_k, W^u_k)$, $$h_{\nu}(f, W^{u})\leq h_{m}(\mathbb{P}(F), \mathcal{F}^{u}).$$ This combined with Theorem \ref{criterium} gives $$h_{\nu}(f, W^{u})=h_{m}(\mathbb{P}(F), \mathcal{F}^{u}),$$ and therefore, $m$ will be a $u$-state. 

Observe that since $f$ is $C^2$, if $\nu_k=\nu=\mu$ where $\mu$ is a measure in the Lebesgue class, we have the following identity due to Theorem C' of \cite{LY2}: $$h_{\mu}(f_k, W^{u}_k)=\int_M{\log \left|\det Df_k\vert E^u(f_k)\right| d\mu},$$ for every $k\in \mathbb{N}$ and the same formula holds for $f$. 

Therefore, since $f_k\to f $ in the $C^1$ topology, 
\begin{equation}\label{Parte2}
\lim_{k\to \infty} h_{\mu}(f_k, W^{u}_k)=h_{\mu}(f, W^{u}).
\end{equation}

This observation together with Proposition \ref{limitugral} implies the following corollary. 

\begin{cor}\label{limitu}
Fix $N>0$. Let $*\in \{\mu, \omega\}$, $f_k, f\in \mathcal{E}^N_{*}(M)$ for every $k\in \mathbb{N}$, $f_k\to f$ in the $C^1$ topology, $F=Df\vert E^c$ and $F_k=Df_k\vert E^c(f_k)$. 

If $m_k$ is a $u$-state for $\mathbb{P}(F_k)$ projecting down to $\mu$ and $m_k\to m$ in the weak$^{*}$ topology, then $m$ is a $u$-state for $\mathbb{P}(F)$ and $\mu$.  
\end{cor}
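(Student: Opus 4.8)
The plan is to deduce this statement directly from Proposition \ref{limitugral}, for which it only remains to verify the hypothesis on the convergence of unstable entropies. We are in the special case $\nu_k = \nu = \mu$ with $\mu$ in the Lebesgue class, so the measures trivially converge in the weak$^*$ topology. The key observation is that, because each $f_k$ is $C^2$, the entropy along the unstable foliation is not merely bounded by the sum of the positive Lyapunov exponents but in fact \emph{equal} to it: by Theorem C$'$ of \cite{LY2} (the Ledrappier--Young formula, applicable since $\mu$ is absolutely continuous and hence an SRB measure along $W^u$),
\begin{equation*}
h_{\mu}(f_k, W^u_k) = \int_M \log\bigl|\det Df_k\vert E^u(f_k)\bigr|\, d\mu,
\end{equation*}
and likewise for $f$. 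This is exactly Equation (\ref{Parte2}) in the excerpt.

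The next step is to check that the right-hand side varies continuously under $C^1$ perturbation. Since $f_k \to f$ in the $C^1$ topology, the derivatives $Df_k$ converge uniformly to $Df$, and since partial hyperbolicity is a $C^1$-open condition, the unstable bundles $E^u(f_k)$ converge (continuously, uniformly on $M$) to $E^u(f)$; hence the integrand $x \mapsto \log|\det Df_k\vert E^u(f_k)(x)|$ converges uniformly to $x \mapsto \log|\det Df\vert E^u(f)(x)|$. Integrating against the fixed measure $\mu$ gives
\begin{equation*}
\lim_{k\to\infty} h_{\mu}(f_k, W^u_k) = h_{\mu}(f, W^u).
\end{equation*}
Combined with the fact that every $f_k$ and $f$ lies in $\mathcal{E}^N_*(M) \subset \mathcal{B}^N(M)$, and that $m_k$ is a $u$-state for $\mathbb{P}(F_k)$ and $\mu$ with $m_k \to m$, all the hypotheses of Proposition \ref{limitugral} are met, so $m$ is a $u$-state for $\mathbb{P}(F)$ and $\mu$.

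I expect no serious obstacle here: the corollary is essentially a packaging of Proposition \ref{limitugral} together with the absolute-continuity input. The only point that requires a word of care is the continuity of $E^u(f_k)$ in $f_k$, which I would quote from the standard theory of partially hyperbolic systems (e.g. \cite{HPS}); the convergence is uniform on the compact manifold $M$ because the dominated splitting depends continuously on the diffeomorphism in the $C^1$ topology. Everything else — weak$^*$ convergence of the constant sequence $\mu$, membership in $\mathcal{B}^N(M)$, and the Ledrappier--Young identity — is either trivial or already recorded in the excerpt.
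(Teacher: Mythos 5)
Your proposal is correct and follows essentially the same route as the paper: the authors also deduce Corollary \ref{limitu} from Proposition \ref{limitugral} by invoking Theorem C$'$ of \cite{LY2} to write $h_{\mu}(f_k,W^u_k)=\int_M \log|\det Df_k\vert E^u(f_k)|\,d\mu$ and then using $C^1$ convergence (hence convergence of the unstable bundles and of the integrands) to obtain Equation (\ref{Parte2}). Your extra remark on the continuity of $E^u$ in the $C^1$ topology is exactly the implicit ingredient the paper relies on, so there is nothing to correct.
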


The remaining of this Section is dedicated to prove Proposition \ref{limitsup}. As already mentioned, this proposition has been proved in \cite{Y} for partially hyperbolic diffeomorphisms. Since the map $\mathbb{P}(F)$ is not differentiable, in fact, it is only H\"older continuous, we can not apply that result to our setting. However, we are going to adapt those arguments. 

\subsection*{Strategy of the proof of Proposition \ref{limitsup}}
The first step in the proof is to construct subordinated measurable partitions to calculate the entropies $h_{m}(\mathbb{P}(F), \mathcal{F}^{u})$ and $h_{m_k}(\mathbb{P}(F_k), \mathcal{F}^{u}_k)$. This is done in sections 5.1 and 5.2. Since $f_k\to f$, we are able to construct these partition in a uniform way. 

More precisely, we define a finite partition of $\mathbb{P}(E^c)$, denoted by $\widehat{\mathcal{A}}$ and consider $\widehat{\mathcal{A}}^{u}$ as the partition defined by the intersection of elements of $\widehat{\mathcal{A}}$ and leaves of the foliation $\mathcal{F}^u$. Then, we prove that $$\widehat{\xi}^u=\bigvee_{j=0}^{\infty} \mathbb{P}(F^j)(\widehat{\mathcal{A}}^{u}),$$ is a measurable partition $m$-subordinated to $\mathcal{F}^u$. Therefore, 
\begin{equation}\label{nueva}
h_{m}(\mathbb{P}(F), \mathcal{F}^{u})=H_m\left(\widehat{\mathcal{A}}^u \Big| \bigvee_{j=1}^{\infty} \mathbb{P}(F^{j})(\widehat{\mathcal{A}}^u) \right).
\end{equation} 

The second step is Proposition \ref{infn}. It states that $h_{m}(\mathbb{P}(F), \mathcal{F}^{u})$ can be approximated by $H_m\left(\bigvee_{j=0}^n \mathbb{P}(F^{-j})(\widehat{\mathcal{A}}^u)\Big| \widehat{\mathcal{A}}^u\right)$. Observe that in Equation (\ref{nueva}), we are considering an infinite number of iterates and the proposition allow us to reduce the problem to consider a finite number of iterates. 

In section 5.3, for every $n\in \mathbb{N}\cup \{0\}$, we consider a sequence of finite partitions, denoted by $\widehat{\mathcal{A}}^n_l$ that verifies, $$\widehat{\mathcal{A}}^n_l\nearrow_{l\to \infty} \bigvee_{j=0}^n \mathbb{P}(F^{-j})(\widehat{\mathcal{A}}^u).$$ Proposition \ref{relac} shows that this property and the definition of the partitions $\widehat{\mathcal{A}}^n_l$ imply that $H_m(\widehat{\mathcal{A}}^n_l\vert \widehat{\mathcal{A}}^0_l)$ approximates $H_m\left(\bigvee_{j=0}^n \mathbb{P}(F^{-j})(\widehat{\mathcal{A}}^u)\Big| \widehat{\mathcal{A}}^u\right).$ Observe that we reduce the problem once more. Combining the two results, we are able to approximate $h_{m}(\mathbb{P}(F), \mathcal{F}^{u})$ by the conditional entropy of two finite partitions. 

All the above results are also true for $h_{m_k}(\mathbb{P}(F_k), \mathcal{F}^{u}_k)$. Therefore, in order to conclude Proposition \ref{limitsup} it is enough to study the continuity properties of the partitions $\widehat{\mathcal{A}}^n_l$ and $\widehat{\mathcal{A}}^n_{k,l}$. This is a consequence of Proposition \ref{contF} and the fact that the partitions were constructed in a uniform way.

%\subsection{Proof of Proposition \ref{limitsup}.}

\subsection{Preliminaries.} 

If $f$ is a $C^1$ partially hyperbolic diffeomorphism of a manifold $M$ of dimension $d$, we say that $(B, \Phi, D)$ is a $W^u$-foliation box if: 
\vspace{0.2cm}
\begin{enumerate}[label=(\alph*)]
\item $\Phi\colon [0,1]^{d-u}\times [0,1]^{u}\to M$ is a topological embedding,
\vspace{0.2cm}
\item every plaque $P_x=\Phi({x}\times [0,1]^{u})$ is contained in a leaf of $W^u$,
\vspace{0.2cm}
\item every $\Phi(x,\cdot)\colon [0,1]^u\to M$ is a $C^1$ embedding depending continuously with $x$ in the $C^1$ topology, 
\vspace{0.2cm}
\item $D=\Phi([0,1]^{d-u}\times {0})$ and $B$ is the image of $\Phi$.
\end{enumerate}
\vspace{0.2cm}

Since $M$ is compact, we can fix a finite cover consisting of foliation boxes $\{(B_i, \Phi_i, D_i)\}_{i=1}^q$. 
\vspace{0.1cm}

If $f_k\to f$ in the $C^1$ topology, then for every $k\in \mathbb{N}$ it is possible to take a finite cover of $M$ by $W^u_k$ foliation boxes $\{(B^k_i, \Phi^k_i, D_i)\}_{i=1}^q$ such that for every $i\in \{1,...,q\}$:

\begin{enumerate}[label=(\alph*)]
\item $\Phi^k_i\to \Phi_i$ in the $C^0$ topology, 
\vspace{0.2cm}
\item for every $x\in D_i$, $\Phi^k_i(x,\cdot)\to \Phi_i(x,\cdot)$ in the $C^1$ topology.
\end{enumerate}
\vspace{0.2cm}

Observe that the cross-sections $D_i$ are the same for every foliation box independent of $k$. 

Fix $N>0$. Let $f_k,f\in \mathcal{B}^N(M)$ such that $f_k\to f$ in the $C^1$ topology. Fix finite covers like above associated to $f$ and $f_k$. 

Take $r_0<1$ to be a Lebesgue number of the open cover $\{B_i\}_{i=1}^q$. Define $$I\colon M \to \{1,...,q\} \text{ such that } B(x, r_0)\subset B_{I(x)},$$ for every $x\in M$. Since $f_k\to f$ in the $C^1$ topology, we can suppose that $$B(x,r_0)\subset B^k_{I(x)} \text{ for every } k\in \mathbb{N}.$$ 

If $\widehat{\upsilon}$ is the function in Equation (\ref{ph}) that has be taken uniform for $f$ and $f_k$, we define $\zeta=\max_{x\in M} \widehat{\upsilon}$. Fix $\zeta_0$ such that $\zeta<\zeta_0<1$. 

\subsection{Construction of a subordinated partition.}

Let $\nu_k$ be an $f_k$-invariant probability measure for $f_k$ for every $k\in \mathbb{N}$ and $\nu_k\to \nu$ in the weak$^*$ topology. 

The first step is to construct a measurable partitions $\nu_k$-subordinated to $W^u_k$ that are uniform (in some sense) for $f$ and $f_k$. This type of construction has already appear at \cite{LS,Y}.

By Proposition 3.1 of \cite{Y}, we can construct a finite partition of $M$, denoted by $\mathcal{A}$, such that every element of $\mathcal{A}$ has diameter smaller that $r_0$ and there exist constants $C_0, C_k>0$ such that for every $i\geq 0$,
\begin{equation}\label{frontera}
\nu_k(B(\partial \mathcal{A}, \zeta^i))\leq C_k \zeta_0^i\quad  \text{and}\quad \nu(B(\partial \mathcal{A}, \zeta^i))\leq C_0 \zeta_0^i.
\end{equation}

Define the partition $\mathcal{A}^{u}$ such that every element is the intersection between an element of $\mathcal{A}$ and a leaf of $W^u$. 

Consider $\xi^u=\bigvee_{j=0}^{\infty} f^j(\mathcal{A}^{u})$. Then, it is clear that $\xi^u$ satisfies conditions (a) and (c) in Definition \ref{subo}. In order to see that condition (b) is verified, we used Equation (\ref{frontera}) and the fact that the action of $f^{-1}$ in $W^u$ is controlled by $\zeta$ (See Lemma 3.2 of \cite{Y}). Therefore, $\xi^u$ is a measurable partition $\nu$-subordinated to $W^u$. 

Analogously, for every $k\in \mathbb{N}$, we have a measurable partition $\xi^u_k$ $\nu_k$-subordinated to $W^u_k$. 

Let $\pi\colon \mathbb{P}(E^c)\to M$, $F=Df\vert E^c$ and $\mathcal{F}^u$ by the foliation of $\mathbb{P}(F)$ given by Proposition \ref{Fu}. Define $\widehat{\mathcal{A}}=\pi^{-1}(\mathcal{A})$ and $\widehat{\mathcal{A}}^{u}$ as the partition defined by the intersection of elements of $\widehat{\mathcal{A}}$ and leaves of $\mathcal{F}^u$ as before. 

Define $$\widehat{\xi}^u=\bigvee_{j=0}^{\infty} \mathbb{P}(F^j)(\widehat{\mathcal{A}}^{u}),$$ then $\widehat{\xi}^u$ is a measurable partition of $\mathbb{P}(E^c)$ $m$-subordinated to $\mathcal{F}^u$ for every $\mathbb{P}(F)$-invariant probability measure $m$ projecting down to $\nu$. 

We can repeat this construction for every $k\in \mathbb{N}$ to obtain a measurable partition $\widehat{\xi}^u_k$ which is $m_k$-subordinated to $\mathcal{F}^u_ k$ for every $\mathbb{P}(F_k)$-invariant probability measure $m_k$ projecting down to $\nu_k$. Here $F_k=Df_k\vert E^c(f_k)$.

Observe that $\pi(\widehat{\xi}_u(x,v))=\xi_u(x)$ and $\pi_k(\widehat{\xi}_k^u(x,v))=\xi_k^u(x)$.

\begin{prop}\label{infn} If $f\in \mathcal{B}^N(M)$, $\nu$ is an $f$-invariant probability measure, $F=Df\vert E^c$, $\mathcal{F}^u$ is defined by Proposition \ref{Fu} and $m$ is a $\mathbb{P}(F)$-invariant probability measure projecting down to $\nu$, then 
\begin{equation*}
%\begin{aligned}
\frac{1}{n} H_m\left(\bigvee_{j=1}^n \mathbb{P}(F^{-j})(\widehat{\mathcal{A}}^u)\Big| \widehat{\mathcal{A}}^u\right)\searrow_{n\to \infty} h_m(\mathbb{P}(F),\mathcal{F}^u)
%&=\inf \frac{1}{n} H_m(\bigvee_{j=0}^n \mathbb{P}(F^{-j})(\widehat{\mathcal{A}}^u)\vert \widehat{\mathcal{A}}^u)
%\end{aligned}
\end{equation*}
%If $f_k$ is a sequence such that $f_k\in \mathcal{B}^N(M)$ and $f_k\to f$ in the $C^1$ topology, $\nu_k$ is a sequence of $f_k$-invariant probability measures a then the same property holds for $\mathbb{P}(F_k)$, $\mathcal{F}^u_k$ and any $\mathbb{P}(F_k)$-invariant probability measure $m_k$ projecting down to $\mu$. 
\end{prop}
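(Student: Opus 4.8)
The plan is to reduce the statement to the standard Kolmogorov–Sinai-type limit for the conditional entropy of an increasing partition, applied to the partition $\widehat{\mathcal{A}}^u$ with respect to the (partially) invertible dynamics $\mathbb{P}(F)$ along the foliation $\mathcal{F}^u$. Write $\widehat{\xi}^u=\bigvee_{j\ge 0}\mathbb{P}(F^j)(\widehat{\mathcal{A}}^u)$, which we already know is a measurable partition $m$-subordinated to $\mathcal{F}^u$, so that $h_m(\mathbb{P}(F),\mathcal{F}^u)=H_m(\widehat{\mathcal{A}}^u\mid \mathbb{P}(F)\widehat{\xi}^u)=H_m\!\left(\widehat{\mathcal{A}}^u\,\big|\,\bigvee_{j\ge 1}\mathbb{P}(F^j)(\widehat{\mathcal{A}}^u)\right)$ by Equation (\ref{nueva}). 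First I would record the elementary identity, valid for any finite partition and any measure-preserving map,
\begin{equation*}
H_m\!\left(\bigvee_{j=0}^{n-1}\mathbb{P}(F^{-j})(\widehat{\mathcal{A}}^u)\right)=\sum_{j=0}^{n-1}H_m\!\left(\widehat{\mathcal{A}}^u\,\Big|\,\bigvee_{i=1}^{j}\mathbb{P}(F^{i})(\widehat{\mathcal{A}}^u)\right),
\end{equation*}
which follows from the chain rule for conditional entropy together with $\mathbb{P}(F)$-invariance of $m$ (to shift each block of iterates). Dividing by $n$ exhibits the left-hand averages of the proposition, up to rewriting, as Cesàro averages of the decreasing sequence $a_j:=H_m\!\left(\widehat{\mathcal{A}}^u\,\big|\,\bigvee_{i=1}^{j}\mathbb{P}(F^{i})(\widehat{\mathcal{A}}^u)\right)$.

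The second step is to identify the quantity $\frac1n H_m\!\left(\bigvee_{j=1}^n \mathbb{P}(F^{-j})(\widehat{\mathcal{A}}^u)\,\big|\,\widehat{\mathcal{A}}^u\right)$ appearing in the statement with the $n$-th such Cesàro average (again by a chain-rule manipulation and invariance: conditioning on $\widehat{\mathcal{A}}^u$ and then peeling off the negative iterates one at a time, each step contributes $H_m(\widehat{\mathcal{A}}^u\mid \bigvee_{i=1}^{j}\mathbb{P}(F^{i})(\widehat{\mathcal{A}}^u))$ after translating by a power of $\mathbb{P}(F)$). Here one must be slightly careful because $\mathbb{P}(F)$ acts on $\mathbb{P}(E^c)$ invertibly over the invertible base $f$, so all the shifts are legitimate. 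Once this identification is in place, the sequence $b_n:=\frac1n\sum_{j=1}^n a_j$ is the Cesàro average of a monotone non-increasing sequence $(a_j)$, hence $(b_n)$ is itself non-increasing — giving the ``$\searrow$'' in the statement — and $\lim_n b_n=\lim_j a_j$. Finally, $\lim_j a_j = H_m\!\left(\widehat{\mathcal{A}}^u\,\big|\,\bigvee_{i\ge 1}\mathbb{P}(F^{i})(\widehat{\mathcal{A}}^u)\right)=h_m(\mathbb{P}(F),\mathcal{F}^u)$ by the continuity of conditional entropy along increasing sequences of partitions (Lemma \ref{limpart}), using that $a_1=H_m(\widehat{\mathcal{A}}^u\mid \mathbb{P}(F)\widehat{\mathcal{A}}^u)<\infty$ since $\widehat{\mathcal{A}}^u$ refines into finitely many pieces inside each $\mathcal{F}^u$-plaque of each element of the finite partition $\widehat{\mathcal{A}}$.

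The finiteness input $H_m(\widehat{\mathcal{A}}^u\mid \mathbb{P}(F)\widehat{\mathcal{A}}^u)<\infty$ deserves a word: it holds because $\widehat{\mathcal{A}}=\pi^{-1}(\mathcal{A})$ is finite and each plaque of $\mathcal{F}^u$ meets only boundedly many elements of $\mathbb{P}(F)(\widehat{\mathcal{A}}^u)$, by the uniform bounded-geometry of $\mathcal{F}^u$ over $W^u$ from Proposition \ref{Fu}(a) and the fact that the expansion of $\mathbb{P}(F^{-1})$ along $\mathcal{F}^u$-plaques is controlled (comparable to the contraction of $f^{-1}$ along $W^u$), exactly as in the construction showing $\widehat{\xi}^u$ is subordinated. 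I expect the main obstacle to be purely bookkeeping: keeping track of which powers of $\mathbb{P}(F)$ are applied when invoking invariance, and making sure the two chain-rule computations (one for the average of $H_m$ of the join, one for the displayed conditional entropy) really land on the same decreasing sequence $(a_j)$. There is no genuinely new dynamical content here beyond what is already assembled in Sections 5.1–5.2; this is the $\mathbb{P}(F)$ analogue of the classical fact that the entropy of an increasing partition is the Cesàro limit of one-step conditional entropies, and the proof in \cite{Y} for the differentiable case transcribes verbatim once the subordinated partition $\widehat{\xi}^u$ is in hand.
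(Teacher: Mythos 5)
Your plan follows the paper's proof essentially verbatim: the conditional chain-rule identity in your second step is exactly the paper's key identity $H_m\bigl(\bigvee_{j=1}^n\mathbb{P}(F^{-j})(\widehat{\mathcal{A}}^u)\,\big\vert\,\widehat{\mathcal{A}}^u\bigr)=\sum_{j=1}^n H_m\bigl(\widehat{\mathcal{A}}^u\,\big\vert\,\bigvee_{i=1}^j\mathbb{P}(F^{i})(\widehat{\mathcal{A}}^u)\bigr)$, after which the paper likewise applies Lemma \ref{limpart} with the finiteness input $H_m(\widehat{\mathcal{A}}^u\vert\,\mathbb{P}(F)(\widehat{\mathcal{A}}^u))<\infty$ (Lemma 3.3 of \cite{Y}) and concludes by the monotonicity of Ces\`aro averages of a non-increasing sequence. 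The only adjustment needed is to drop your first, unconditional display, which is vacuous in this setting because $\widehat{\mathcal{A}}^u$ has uncountably many null atoms, so $H_m(\widehat{\mathcal{A}}^u)=\infty$; the conditional identity is what both you and the paper actually use.
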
 
\begin{proof}
Using the invariance of $m$ and the following property of the entropy: $$H_{m}(\mathcal{P}\vee \mathcal{Q})=H_m(\mathcal{P}\vert \mathcal{R}) + H_m(\mathcal{Q}\vert \mathcal{P}\vee \mathcal{R}),$$ we have that 
\begin{equation}\label{prop54}
H_m\left(\bigvee_{j=1}^n \mathbb{P}(F^{-j})(\widehat{\mathcal{A}}^u)\Big| \widehat{\mathcal{A}}^u\right)=\sum_{j=1}^n H_m\left(\widehat{\mathcal{A}}^u \Big| \bigvee_{i=1}^j \mathbb{P}(F^{i})(\widehat{\mathcal{A}}^u) \right).
\end{equation} We refer the reader to the proof of Proposition 4.1 of \cite{Y} for more details about how to prove this identity. 

Since $$\bigvee_{i=1}^j \mathbb{P}(F^{i})(\widehat{\mathcal{A}}^u)\nearrow_{j\to \infty} \bigvee_{i=1}^{\infty} \mathbb{P}(F^{i})(\widehat{\mathcal{A}}^u),$$ and $H_m\left(\widehat{\mathcal{A}}^u \Big| \mathbb{P}(F)(\widehat{\mathcal{A}}^u) \right)<\infty$ (See Lemma 3.3 of \cite{Y}), we can apply Lemma \ref{limpart} to obtain, $$H_m\left(\widehat{\mathcal{A}}^u \Big| \bigvee_{i=1}^j \mathbb{P}(F^{i})(\widehat{\mathcal{A}}^u) \right)\searrow_{j\to \infty} H_m\left(\widehat{\mathcal{A}}^u \Big| \bigvee_{i=1}^{\infty} \mathbb{P}(F^{i})(\widehat{\mathcal{A}}^u) \right)=h_m(\mathbb{P}(F),\mathcal{F}^u).$$

Finally, Equation (\ref{prop54}) finishes the proof. 
\end{proof}

\subsection{Construction of a finite partition.}

The key argument to prove Proposition \ref{limitsup} is to approximate the entropies $h_{m}(\mathbb{P}(F),\mathcal{F}^{u})$ and $h_{m_k}(\mathbb{P}(F_k),\mathcal{F}^{u}_k)$ by the conditional entropy of two finite partitions. 

\subsubsection{An auxiliary partition}

Let $m$ be a $\mathbb{P}(F)$-invariant probability measure projecting down to $\mu$ and $\pi\colon \mathbb{P}(E^c)\to M$. 

For every $i\in \{1,...,q\}$, $D_i$ denotes the cross-section associated to the foliation boxes of $W^u$ as in Section 5.1. Moreover, $$\widehat{B}_i=\pi^{-1}(B_i) \text{ and } \widehat{D_i}=\pi^{-1}(D_i).$$ 

We are going to consider a sequence of finite partitions of $\widehat{D}_i$, denoted by $\widehat{\mathcal{C}}_{i,l}$, such that:
\begin{enumerate}[label=(\roman*)]
\item $\widehat{\mathcal{C}}_{i,l}<\widehat{\mathcal{C}}_{i,l+1}$ for every $l\in \mathbb{N}$. 
\item $\text{diam}(\widehat{\mathcal{C}}_{i,l})\to 0$ as $l\to \infty$. 
%\item for every element $\widehat{C}$ of $\widehat{\mathcal{C}}_{i,l}$, we have $\pi(\widehat{C})\in \mathcal{C}_{i,l}$. 
\item For any $i,l\geq 1$ and any element $\widehat{C}$ of $\widehat{\mathcal{C}}_{i,l}$, we have $$m\left(\bigcup_{(x,v)\in \partial \widehat{C}} \widehat{\mathcal{A}}^u (x,v)\right)=0.$$
\item For any $i,l\geq 1$, $\pi(\widehat{\mathcal{C}}_{i,l})$ is a partition of $D_i$. 
\end{enumerate}

One way to construct the partitions $\widehat{\mathcal{C}}_{i,l}$ is to consider a sequence of finite partitions on $D_i$, $\mathcal{C}_{i,1}< \mathcal{C}_{i,2}< \cdots$, such that $\text{diam}(\mathcal{C}_{i,l})\to 0$ as $l\to \infty.$ Then, we can use the local charts of the fiber bundle $\mathbb{P}(E^c)$ and define a finite partition on the fibers for $\pi^{-1}(\mathcal{C}_{i,l})$. Since in this case, the fibers are homeomorphic to $S^1$, we can take the finite partitions as intervals contained on $[0,1]$ with arbitrarily small diameter. 

\subsubsection{A finite partition of $\mathbb{P}(E^c)$.}

For every $i\in \{1,..,q\}$ and every $l\in \mathbb{N}$, the partition $\widehat{\mathcal{C}}_{i,l}$ defined above induces a partition on $\widehat{B}_i$: $$\widehat{\mathcal{C}}^{0}_{i,l}=\left\{\bigcup_{(x,v)\in \widehat{C}} \widehat{\mathcal{A}}^{u} (x,v): \widehat{C} \, \text{is an element of}\; \widehat{\mathcal{C}}_{i,l}\right\}.$$

Observe that by property (ii) in the definition of $\widehat{\mathcal{C}}_{i,l}$, if $(x,v)\in \widehat{B}_i$, we have that $\widehat{C}^0_{i,l}(x,v)\to \widehat{\mathcal{A}}^u (x,v)$ as $l\to \infty$.

\subsubsection{Final construction.}

For every $n\in \mathbb{Z}$, $n\geq 0$, let $\widehat{\mathcal{A}}^n=\bigvee_{j=0}^n \mathbb{P}(F^{-j})(\widehat{\mathcal{A}})$.

By the definition of $r_0$ and the property in the diameter of $\mathcal{A}$, we know that for every element $P\in \widehat{\mathcal{A}}^n$, there exists $i\in \{1,...,q\}$ such that $P\subset \widehat{B}_i$. Then, $\widehat{\mathcal{C}}^0_{i,l}$ induces a partition on $P$ whose elements we denote by $P_l$. 

The observation above implies that for every $l\in \mathbb{N}$, it is possible to define a partition of $\mathbb{P}(E^c)$ by, $$\widehat{\mathcal{A}}^n_l=\{P_l: P\in \widehat{\mathcal{A}}^n\}.$$

\begin{obs} We can repeat the same construction of Sections 5.3.1-5.3.3 for every $k\in \mathbb{N}$. For every $i\in \{1,...,q\}$ and $l\in \mathbb{N}$, we construct $\widehat{\mathcal{C}}^k_{i,l}$ which are partitions of $\widehat{B}^k_i=\pi_k^{-1}(B^k_i)$. Then, we use them to define partitions of $\mathbb{P}(E^c(f_k))$ as above. We denote these partitions by $\widehat{\mathcal{A}}^n_{k,l}$. 
\end{obs}
 
We have the following properties whose verification is direct form the definitions. Observe that 
\begin{equation}\label{idemu}
\widehat{\mathcal{A}}^n\vee \widehat{\mathcal{A}}^u=\bigvee_{j=0}^n \mathbb{P}(F^{-j})(\widehat{\mathcal{A}}^u).
\end{equation}

\begin{lemma}\label{propr} If $f\in \mathcal{B}^N(M)$, $\nu$ is an $f$-invariant probability measure, $F=Df\vert E^c$, $\mathcal{F}^u$ is defined by Proposition \ref{Fu} and $m$ is a $\mathbb{P}(F)$-invariant probability measure projecting down to $\nu$, then for every $n\in \mathbb{Z}$, $n\geq 0$,
\begin{enumerate}[label=\emph{(\alph*)}]
\item $\widehat{\mathcal{A}}^n_l\nearrow_{l\to \infty} \bigvee_{j=0}^n \mathbb{P}(F^{-j})(\widehat{\mathcal{A}}^u)$,
\vspace{0.2cm}
\item $\widehat{\mathcal{A}}^n<\widehat{\mathcal{A}}^n_l<\bigvee_{j=0}^n \mathbb{P}(F^{-j})(\widehat{\mathcal{A}}^u)$,
\vspace{0.2cm}
\item $m(\partial \widehat{\mathcal{A}}^n_l)=0$. 
\end{enumerate}
%If $f_k$ is a sequence such that $f_k\in \mathcal{E}^N_{*}(M)$ and $f_k\to f$ in the $C^1$ topology, then the same properties holds for $\mathbb{P}(F_k)$, $\mathcal{F}^u_k$ and any $\mathbb{P}(F_k)$-invariant probability measure $m_k$ projecting down to $\mu$. 
\end{lemma}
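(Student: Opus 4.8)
The three items of Lemma~\ref{propr} all follow by unwinding the definitions of $\widehat{\mathcal{A}}^n$, $\widehat{\mathcal{A}}^u$, and $\widehat{\mathcal{A}}^n_l$, combined with the approximation property of the auxiliary partitions $\widehat{\mathcal{C}}_{i,l}$ recorded in Section~5.3.1. First I would set up notation: fix $(x,v)\in \mathbb{P}(E^c)$, let $P=\widehat{\mathcal{A}}^n(x,v)$ be its element in $\widehat{\mathcal{A}}^n$, and let $i\in\{1,\dots,q\}$ be the index with $P\subset\widehat{B}_i$ provided by the Lebesgue-number argument (diam$(\mathcal{A})<r_0$ and the choice of $I$). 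Then by construction $\widehat{\mathcal{A}}^n_l(x,v)=P_l=$ the element of the partition of $P$ induced by $\widehat{\mathcal{C}}^0_{i,l}$, and $\widehat{\mathcal{C}}^0_{i,l}(x,v)$ is the union of all local leaves $\widehat{\mathcal{A}}^u(y,w)$ with $(y,w)$ in the element $\widehat{C}=\widehat{\mathcal{C}}_{i,l}(x,v)$.

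\textbf{Item (b).} For the left inequality $\widehat{\mathcal{A}}^n<\widehat{\mathcal{A}}^n_l$: this is immediate since $\widehat{\mathcal{A}}^n_l$ was \emph{defined} by refining each element $P$ of $\widehat{\mathcal{A}}^n$. For the right inequality $\widehat{\mathcal{A}}^n_l<\bigvee_{j=0}^n \mathbb{P}(F^{-j})(\widehat{\mathcal{A}}^u)$: using the identity~(\ref{idemu}), it suffices to show each $P_l$ is a union of elements of $\widehat{\mathcal{A}}^n\vee\widehat{\mathcal{A}}^u$. But $P_l\subset P\in\widehat{\mathcal{A}}^n$, and within $P$, the partition $\widehat{\mathcal{C}}^0_{i,l}$ is by construction a union of whole local leaves $\widehat{\mathcal{A}}^u_{\mathrm{loc}}(y,w)$; since the diameters in $\mathcal{A}$ are $<r_0$, these local leaves agree with the genuine $\widehat{\mathcal{A}}^u$-elements inside $P$, so $P_l$ is $\widehat{\mathcal{A}}^u$-saturated inside $P$, which is exactly what is needed.

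\textbf{Item (a).} Monotonicity $\widehat{\mathcal{A}}^n_l<\widehat{\mathcal{A}}^n_{l+1}$ follows from property (i) of $\widehat{\mathcal{C}}_{i,l}$ ($\widehat{\mathcal{C}}_{i,l}<\widehat{\mathcal{C}}_{i,l+1}$), which passes to the induced refinements. For the limit, fix $(x,v)$ in the full-measure set where these partitions behave well; I must show $\bigvee_l \widehat{\mathcal{A}}^n_l(x,v)=\bigvee_{j=0}^n\mathbb{P}(F^{-j})(\widehat{\mathcal{A}}^u)(x,v)$ up to $m$-measure zero. By item (b), $\bigcap_l\widehat{\mathcal{A}}^n_l(x,v)\supset (\widehat{\mathcal{A}}^n\vee\widehat{\mathcal{A}}^u)(x,v)$; for the reverse inclusion I use property (ii) of $\widehat{\mathcal{C}}_{i,l}$, namely $\mathrm{diam}(\widehat{\mathcal{C}}_{i,l})\to 0$, which forces $\widehat{\mathcal{C}}^0_{i,l}(x,v)\to\widehat{\mathcal{A}}^u(x,v)$ transversally (this is the observation already recorded after the definition of $\widehat{\mathcal{C}}^0_{i,l}$), so that any two points in $\bigcap_l P_l$ must lie on the same $\widehat{\mathcal{A}}^u$-leaf, giving $\bigcap_l\widehat{\mathcal{A}}^n_l(x,v)\subset(\widehat{\mathcal{A}}^n\vee\widehat{\mathcal{A}}^u)(x,v)$. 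Combined with~(\ref{idemu}) this yields (a).

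\textbf{Item (c).} The boundary $\partial\widehat{\mathcal{A}}^n_l$ is contained in $\bigcup_{j=0}^n\mathbb{P}(F^{-j})(\partial\widehat{\mathcal{A}})\cup\bigcup_{i,l}\bigcup_{(x,v)\in\partial\widehat{C}}\widehat{\mathcal{A}}^u(x,v)$, where $\widehat{C}$ ranges over the (finitely many) elements of $\widehat{\mathcal{C}}_{i,l}$. The second set has $m$-measure zero by property (iii) of $\widehat{\mathcal{C}}_{i,l}$. For the first, note $\partial\widehat{\mathcal{A}}=\pi^{-1}(\partial\mathcal{A})$, so $m(\mathbb{P}(F^{-j})(\partial\widehat{\mathcal{A}}))=\nu(f^{-j}(\partial\mathcal{A}))=\nu(\partial\mathcal{A})$ by $f$-invariance of $\nu$, and Equation~(\ref{frontera}) (with $i=0$, i.e.\ $\nu(B(\partial\mathcal{A},\zeta^0))$ is finite, or more directly taking $i\to\infty$) gives $\nu(\partial\mathcal{A})=0$; hence $m(\partial\widehat{\mathcal{A}}^n_l)=0$.

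\textbf{Main obstacle.} The only genuinely delicate point is the reverse inclusion in item (a): verifying that the transversal shrinking $\mathrm{diam}(\widehat{\mathcal{C}}_{i,l})\to 0$ really does separate distinct $\mathcal{F}^u$-leaves inside a fixed element $P$ of $\widehat{\mathcal{A}}^n$. This requires knowing that the local $\mathcal{F}^u$-leaves form a genuine (Hölder) foliation box structure on $\widehat{B}_i$ — which is guaranteed by Proposition~\ref{Fu}(a), stating each $\mathcal{F}^u(x,v)$ is a $(C,\alpha)$-Hölder graph over $W^u(x)$ — so that the holonomy between the cross-section $\widehat{D}_i$ and a nearby plaque is a homeomorphism and the shrinking of $\widehat{\mathcal{C}}_{i,l}$ on $\widehat{D}_i$ propagates uniformly; everything else is bookkeeping with the entropy-partition identities.
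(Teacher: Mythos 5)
Your verification is correct and is exactly what the paper intends: the paper offers no written proof, stating only that the properties follow directly from the definitions of $\widehat{\mathcal{A}}^n$, $\widehat{\mathcal{A}}^u$, $\widehat{\mathcal{C}}_{i,l}$ and $\widehat{\mathcal{C}}^0_{i,l}$ (together with the recorded observation that $\widehat{\mathcal{C}}^0_{i,l}(x,v)\to\widehat{\mathcal{A}}^u(x,v)$ and Equation (\ref{frontera}) for the boundary estimate), which is precisely the unwinding you carry out.
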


\subsection{Relation between the partitions defined above.}

The next proposition provides a relation between the entropy of the partition $\widehat{\mathcal{A}}^n_l$ and the entropy of the partition $\widehat{\mathcal{A}}^u$.

In order to simplify the notation, we are going to denote $\widehat{\mathcal{A}}^0_l=\widehat{\mathcal{A}}_l$ and $\widehat{\mathcal{A}}^0_{k,l}=\widehat{\mathcal{A}}_{k,l}$.

\begin{prop}\label{relac} If $f\in \mathcal{B}^N(M)$, $\nu$ is an $f$-invariant probability measure, $F=Df\vert E^c$, $\mathcal{F}^u$ is defined by Proposition \ref{Fu} and $m$ is a $\mathbb{P}(F)$-invariant probability measure projecting down to $\nu$, then for every $n\in \mathbb{N}$, 
\begin{equation*}
H_m(\widehat{\mathcal{A}}^n_l\vert \widehat{\mathcal{A}}_l)\searrow_{l\to \infty} H_m\left(\bigvee_{j=0}^n \mathbb{P}(F^{-j})(\widehat{\mathcal{A}}^u)\Big| \widehat{\mathcal{A}}^u\right).
\end{equation*}
%If $f_k$ is a sequence such that $f_k\in \mathcal{E}^N_{*}(M)$ and $f_k\to f$ in the $C^1$ topology, then the same property holds for $\mathbb{P}(F_k)$, $\mathcal{F}^u_k$ and any $\mathbb{P}(F_k)$-invariant probability measure $m_k$ projecting down to $\mu$. 

In particular, by Proposition \ref{infn}, for every $n,l\in \mathbb{N}$, we have 
\begin{equation}\label{tres}
H_{m}(\widehat{\mathcal{A}}^n_{l}\vert \widehat{\mathcal{A}}_{l})\geq n\; h_{m}(\mathbb{P}(F), \mathcal{F}^u).
\end{equation}
\end{prop}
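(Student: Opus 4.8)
The plan is to prove the monotone convergence $H_m(\widehat{\mathcal{A}}^n_l\vert \widehat{\mathcal{A}}_l)\searrow H_m\bigl(\bigvee_{j=0}^n\mathbb{P}(F^{-j})(\widehat{\mathcal{A}}^u)\,\big|\,\widehat{\mathcal{A}}^u\bigr)$ by interpreting both sides through Lemma \ref{limpart} together with the structural facts collected in Lemma \ref{propr} and in identity (\ref{idemu}). First I would record that, by construction (Sections 5.3.2--5.3.3) and property (i) of the partitions $\widehat{\mathcal C}_{i,l}$, the partitions $\widehat{\mathcal A}_l=\widehat{\mathcal A}^0_l$ increase with $l$ and, by the observation that $\widehat C^0_{i,l}(x,v)\to\widehat{\mathcal A}^u(x,v)$, they satisfy $\widehat{\mathcal A}_l\nearrow_{l\to\infty}\widehat{\mathcal A}^u$; this is exactly the $n=0$ case of Lemma \ref{propr}(a). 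Similarly $\widehat{\mathcal A}^n_l\nearrow_{l\to\infty}\bigvee_{j=0}^n\mathbb{P}(F^{-j})(\widehat{\mathcal A}^u)$, which is Lemma \ref{propr}(a) in general.

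Second, I would fix $n$ and analyze the sequence $l\mapsto H_m(\widehat{\mathcal A}^n_l\vert\widehat{\mathcal A}_l)$. The conditioning partition $\widehat{\mathcal A}_l$ is increasing in $l$, which by itself would push the conditional entropy \emph{down}; the conditioned partition $\widehat{\mathcal A}^n_l$ is increasing in $l$, which would push it \emph{up}; so monotonicity is not automatic and must be extracted from the special nested structure. The key point is that $\widehat{\mathcal A}^n_l$ is obtained from $\widehat{\mathcal A}^n$ by intersecting with $\widehat{\mathcal C}^0_{i,l}$, and $\widehat{\mathcal C}^0_{i,l}$ is built by unioning whole plaques $\widehat{\mathcal A}^u(x,v)$; hence $\widehat{\mathcal A}^n_l = \widehat{\mathcal A}^n\vee\widehat{\mathcal A}_l$, and more importantly $\widehat{\mathcal A}^n_l\vee\widehat{\mathcal A}_l=\widehat{\mathcal A}^n_l$. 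Using the chain rule $H_m(\mathcal P\vee\mathcal Q\,\vert\,\mathcal R)=H_m(\mathcal Q\,\vert\,\mathcal R)+H_m(\mathcal P\,\vert\,\mathcal Q\vee\mathcal R)$ with $\mathcal P=\widehat{\mathcal A}^n$, $\mathcal Q=\widehat{\mathcal A}^u$, $\mathcal R=\widehat{\mathcal A}_l$, I would rewrite
\begin{equation*}
H_m\bigl(\widehat{\mathcal A}^n\vee\widehat{\mathcal A}^u\,\big|\,\widehat{\mathcal A}_l\bigr)=H_m\bigl(\widehat{\mathcal A}^u\,\big|\,\widehat{\mathcal A}_l\bigr)+H_m\bigl(\widehat{\mathcal A}^n\,\big|\,\widehat{\mathcal A}^u\vee\widehat{\mathcal A}_l\bigr)=H_m\bigl(\widehat{\mathcal A}^u\,\big|\,\widehat{\mathcal A}_l\bigr)+H_m\bigl(\widehat{\mathcal A}^n\,\big|\,\widehat{\mathcal A}^u\bigr),
\end{equation*}
since $\widehat{\mathcal A}^u<\widehat{\mathcal A}^u\vee\widehat{\mathcal A}_l$ and $\widehat{\mathcal A}_l<\widehat{\mathcal A}^u$ forces $\widehat{\mathcal A}^u\vee\widehat{\mathcal A}_l=\widehat{\mathcal A}^u$. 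The same chain rule applied with $\mathcal R=\widehat{\mathcal A}_l$, $\mathcal P=\widehat{\mathcal A}^n_l$, $\mathcal Q=\widehat{\mathcal A}^n$ and using $\widehat{\mathcal A}^n<\widehat{\mathcal A}^n_l$ and $\widehat{\mathcal A}^n<\widehat{\mathcal A}^n\vee\widehat{\mathcal A}_l=\widehat{\mathcal A}^n_l$ lets me express $H_m(\widehat{\mathcal A}^n_l\vert\widehat{\mathcal A}_l)$ in terms of $H_m(\widehat{\mathcal A}^n\vert\widehat{\mathcal A}_l)$ and of $H_m(\widehat{\mathcal A}^n_l\vert\widehat{\mathcal A}^n\vee\widehat{\mathcal A}_l)$; the partition $\widehat{\mathcal A}^n$ is finite and fixed, so $H_m(\widehat{\mathcal A}^n\vert\widehat{\mathcal A}_l)\searrow H_m(\widehat{\mathcal A}^n\vert\widehat{\mathcal A}^u)$ by Lemma \ref{limpart} (finiteness of $H_m(\widehat{\mathcal A}^n\vert\widehat{\mathcal A}_1)$ being automatic since $\widehat{\mathcal A}^n$ is finite), and the residual term converges to $H_m(\widehat{\mathcal A}^n_\infty\vert\,\bigvee_{j=0}^n\mathbb P(F^{-j})(\widehat{\mathcal A}^u))=0$, where $\widehat{\mathcal A}^n_\infty$ denotes the increasing limit. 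Bookkeeping these relations, together with (\ref{idemu}), collapses everything to $H_m\bigl(\bigvee_{j=0}^n\mathbb P(F^{-j})(\widehat{\mathcal A}^u)\,\big|\,\widehat{\mathcal A}^u\bigr)$, and monotone decrease is inherited from Lemma \ref{limpart}.

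Finally, for the displayed inequality (\ref{tres}): Proposition \ref{infn} gives $\frac1n H_m\bigl(\bigvee_{j=1}^n\mathbb P(F^{-j})(\widehat{\mathcal A}^u)\,\big|\,\widehat{\mathcal A}^u\bigr)\searrow h_m(\mathbb P(F),\mathcal F^u)$, so in particular this quantity is $\ge h_m(\mathbb P(F),\mathcal F^u)$ for every $n$; since $\bigvee_{j=0}^n\mathbb P(F^{-j})(\widehat{\mathcal A}^u)=\widehat{\mathcal A}^u\vee\bigvee_{j=1}^n\mathbb P(F^{-j})(\widehat{\mathcal A}^u)$, conditioning on $\widehat{\mathcal A}^u$ kills the $j=0$ term and $H_m\bigl(\bigvee_{j=0}^n\mathbb P(F^{-j})(\widehat{\mathcal A}^u)\,\big|\,\widehat{\mathcal A}^u\bigr)=H_m\bigl(\bigvee_{j=1}^n\mathbb P(F^{-j})(\widehat{\mathcal A}^u)\,\big|\,\widehat{\mathcal A}^u\bigr)\ge n\,h_m(\mathbb P(F),\mathcal F^u)$; combined with the monotone limit just established, $H_m(\widehat{\mathcal A}^n_l\vert\widehat{\mathcal A}_l)\ge n\,h_m(\mathbb P(F),\mathcal F^u)$ for all $l$. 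The main obstacle I anticipate is making rigorous the two lattice identities $\widehat{\mathcal A}^n_l=\widehat{\mathcal A}^n\vee\widehat{\mathcal A}_l$ and $\widehat{\mathcal A}^u\vee\widehat{\mathcal A}_l=\widehat{\mathcal A}^u$ at the level of measurable partitions (mod $m$-zero sets), which is where the plaque-union structure from Section 5.3.2 and property (iii) of $\widehat{\mathcal C}_{i,l}$ (the $m$-negligible boundary condition, giving Lemma \ref{propr}(c)) do the work; once those are in place the rest is an application of Lemma \ref{limpart} and the entropy chain rule.
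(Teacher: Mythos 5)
Your argument is correct and follows essentially the same route as the paper: the structural identity $\widehat{\mathcal{A}}^n_l=\widehat{\mathcal{A}}^n\vee\widehat{\mathcal{A}}_l$ plus the chain rule reduce $H_m(\widehat{\mathcal{A}}^n_l\vert\widehat{\mathcal{A}}_l)$ to conditioning a fixed finite partition on $\widehat{\mathcal{A}}_l\nearrow\widehat{\mathcal{A}}^u$, and Lemma \ref{limpart} together with (\ref{idemu}) then gives the monotone limit, which is exactly the paper's mechanism (the paper works with $\widehat{\mathcal{A}}^n_1$ where you work with $\widehat{\mathcal{A}}^n$, so your use of (\ref{idemu}) replaces the paper's second claim). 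The only cosmetic points are that your ``residual term'' $H_m(\widehat{\mathcal{A}}^n_l\vert\widehat{\mathcal{A}}^n\vee\widehat{\mathcal{A}}_l)$ is identically zero by your own identity, so no limiting argument is needed for it, and the first displayed chain-rule identity (involving the possibly infinite $H_m(\widehat{\mathcal{A}}^u\vert\widehat{\mathcal{A}}_l)$) can simply be dropped.
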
 
\begin{proof}
We start with the following claim which is a key part of the proof. 
\begin{claim}
For every $l_1<l_2$, we have $$\widehat{\mathcal{A}}^n_{l_2}\vee \widehat{\mathcal{A}}_{l_2}=\widehat{\mathcal{A}}^n_{l_1}\vee \widehat{\mathcal{A}}_{l_2}.$$
\end{claim}
\begin{proof}
In the notation of subsection 5.3.3, since $\widehat{C}^{0}_{i,{l_1}}<\widehat{C}^{0}_{i,{l_2}}$, we have that for every $(x,v)\in \mathbb{P}(E^c)$, $$\widehat{C}^{0}_{i,{l_1}}(x,v)\cap\widehat{C}^{0}_{i,{l_2}}(x,v)=\widehat{C}^{0}_{i,{l_2}}(x,v).$$ 

Then, for every $(x,v)\in \mathbb{P}(E^c)$,
\begin{equation*}
\begin{aligned}
\widehat{\mathcal{A}}^n_{l_2}\vee \widehat{\mathcal{A}}_{l_2}(x,v)&=\widehat{\mathcal{A}}^n_{l_2}(x,v)\cap \widehat{\mathcal{A}}_{l_2}(x,v)\\
&=\widehat{\mathcal{A}}^n(x,v)\cap \widehat{C}^{0}_{i,{l_2}}(x,v)\cap\widehat{\mathcal{A}}(x,v)\cap \widehat{C}^{0}_{i,{l_2}}(x,v)\\
&=\widehat{\mathcal{A}}^n(x,v)\cap \widehat{C}^{0}_{i,{l_1}}(x,v)\cap\widehat{\mathcal{A}}(x,v)\cap \widehat{C}^{0}_{i,{l_2}}(x,v)\\
&=\widehat{\mathcal{A}}^n_{l_1}(x,v)\cap \widehat{\mathcal{A}}_{l_2}(x,v)\\
&=\widehat{\mathcal{A}}^n_{l_1}\vee \widehat{\mathcal{A}}_{l_2}(x,v).
\end{aligned}
\end{equation*}
\end{proof}

If we apply the claim above to $l_1=1$ and $l_2=l>1$, then we have that,
\begin{equation}\label{cuatro}
\begin{aligned}
H_m(\widehat{\mathcal{A}}^n_{l}\vert \widehat{\mathcal{A}}_{l})&=H_m(\widehat{\mathcal{A}}^n_{l}\vee \widehat{\mathcal{A}}_{l}\vert \widehat{\mathcal{A}}_{l})\\
                                                               &=H_m(\widehat{\mathcal{A}}^n_{1}\vee \widehat{\mathcal{A}}_{l}\vert \widehat{\mathcal{A}}_{l})\\
																															&=H_m(\widehat{\mathcal{A}}^n_{1}\vert \widehat{\mathcal{A}}_{l}).
\end{aligned}
\end{equation}

By the item (a) of Lemma \ref{propr} for $n=0$, we know that $\widehat{\mathcal{A}}_{l}\nearrow \widehat{\mathcal{A}}^u$. Moreover, since the partitions $\widehat{\mathcal{A}}^n_{l}$ are finite, we can use Lemma \ref{limpart}. By Equation (\ref{cuatro}), we obtain, 
\begin{equation*}
%\begin{aligned}
H_m(\widehat{\mathcal{A}}^n_{l}\vert \widehat{\mathcal{A}}_{l})\searrow_{l\to \infty} H_m(\widehat{\mathcal{A}}^n_{1}\vert \widehat{\mathcal{A}}^u)=H_m(\widehat{\mathcal{A}}^n_{1}\vee \widehat{\mathcal{A}}^u\vert \widehat{\mathcal{A}}^u)
%\end{aligned}
\end{equation*}

In order to conclude the proposition, it is enough to prove the following claim.

\begin{claim}
$$\widehat{\mathcal{A}}^n_{1}\vee \widehat{\mathcal{A}}^u=\bigvee_{j=0}^n \mathbb{P}(F^{-j})(\widehat{\mathcal{A}}^u).$$ 
\end{claim}
\begin{proof}\let\qed\relax
By property (b) of Lemma \ref{propr} for $l=1$, we have 
\begin{equation*}
\widehat{\mathcal{A}}^n<\widehat{\mathcal{A}}^n_{1}<\bigvee_{j=0}^n \mathbb{P}(F^{-j})(\widehat{\mathcal{A}}^u).
\end{equation*}

Then, 
\begin{equation*}
\widehat{\mathcal{A}}^n\vee \widehat{\mathcal{A}}^u<\widehat{\mathcal{A}}^n_{1}\vee \widehat{\mathcal{A}}^u<\bigvee_{j=0}^n \mathbb{P}(F^{-j})(\widehat{\mathcal{A}}^u).
\end{equation*}

Using Equation (\ref{idemu}), we  conclude the claim and therefore the proposition. 
\end{proof}
\end{proof}
																															
\subsection{Proof of Proposition \ref{limitsup}}
First, we recall the statement of the proposition.

\begin{propf}
Fix $N>0$. Let $f_k,f\in \mathcal{B}^N(M)$ for every $k\in \mathbb{N}$, $f_k\to f$ in the $C^1$ topology, $F=Df\vert E^c$, $F_k=Df_k\vert E^c(f_k)$. 

Suppose $\nu_k$ is an $f_k$-invariant probability measure for every $k\in \mathbb{N}$ and $\nu$ is an $f$-invariant probability measure.

If $m_k$ is a $\mathbb{P}(F_k)$-invariant probability measure projecting down to $\nu_k$ and $m_k\to m$ in the weak$^{*}$ topology, then
\begin{equation*}
\limsup_{k\to \infty} h_{m_k}(\mathbb{P}(F_k), \mathcal{F}^{u}_k)\leq h_{m}(\mathbb{P}(F), \mathcal{F}^{u}).
\end{equation*}
\end{propf}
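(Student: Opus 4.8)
The goal is the upper-semicontinuity $\limsup_k h_{m_k}(\mathbb{P}(F_k),\mathcal{F}^u_k)\le h_m(\mathbb{P}(F),\mathcal{F}^u)$. The strategy is to reduce both sides to the entropy of a pair of \emph{finite} partitions via the machinery of Sections 5.1--5.4, and then pass to the limit using weak$^*$ convergence together with the uniform construction of the foliation boxes, the partitions $\mathcal{A},\widehat{\mathcal{C}}_{i,l}$, and Proposition \ref{contF}. Concretely, fix $n\in\mathbb{N}$. By Proposition \ref{infn} applied to $f$ and $m$,
\begin{equation*}
h_m(\mathbb{P}(F),\mathcal{F}^u)=\inf_{n}\frac1n H_m\!\left(\textstyle\bigvee_{j=1}^n\mathbb{P}(F^{-j})(\widehat{\mathcal{A}}^u)\,\Big|\,\widehat{\mathcal{A}}^u\right),
\end{equation*}
and by Proposition \ref{relac} (the inequality \eqref{tres}) applied to each $f_k$ and $m_k$,
\begin{equation*}
H_{m_k}(\widehat{\mathcal{A}}^n_{k,l}\,|\,\widehat{\mathcal{A}}_{k,l})\ge n\,h_{m_k}(\mathbb{P}(F_k),\mathcal{F}^u_k)\qquad\text{for all }k,l\in\mathbb{N}.
\end{equation*}
So it suffices to show that for each fixed $n$, $\limsup_k H_{m_k}(\widehat{\mathcal{A}}^n_{k,l}\,|\,\widehat{\mathcal{A}}_{k,l})$ can be made $\le n\,h_m(\mathbb{P}(F),\mathcal{F}^u)+\varepsilon$ by choosing $l$ large; then dividing by $n$ and letting $n\to\infty$ finishes the proof.

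\textbf{Key steps.} First I would fix $\varepsilon>0$ and, using Proposition \ref{infn} for $f$, pick $n$ with $\frac1n H_m(\bigvee_{j=1}^n\mathbb{P}(F^{-j})(\widehat{\mathcal{A}}^u)\,|\,\widehat{\mathcal{A}}^u)<h_m(\mathbb{P}(F),\mathcal{F}^u)+\varepsilon$; by Proposition \ref{relac} for $f$, pick $l$ with $H_m(\widehat{\mathcal{A}}^n_l\,|\,\widehat{\mathcal{A}}_l)<n(h_m(\mathbb{P}(F),\mathcal{F}^u)+\varepsilon)$. Now $\widehat{\mathcal{A}}^n_l$ and $\widehat{\mathcal{A}}_l$ are \emph{finite} partitions of $\mathbb{P}(E^c)$, and by Lemma \ref{propr}(c) their boundaries are $m$-null. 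Second, I would show $H_{m_k}(\widehat{\mathcal{A}}^n_{k,l}\,|\,\widehat{\mathcal{A}}_{k,l})\to H_m(\widehat{\mathcal{A}}^n_l\,|\,\widehat{\mathcal{A}}_l)$ as $k\to\infty$. This is the crux: one wants to say that the finitely many "plaque $\times$ interval" pieces making up $\widehat{\mathcal{A}}^n_{k,l}$ and $\widehat{\mathcal{A}}_{k,l}$ converge — in measure, against $m_k\to m$ — to the corresponding pieces for $f$. Here is where Proposition \ref{contF} enters: the atoms of $\widehat{\mathcal{A}}^u_k$ (intersections of $\pi_k^{-1}(\mathcal{A})$ with $\mathcal{F}^{u}_k$-leaves) converge in Hausdorff distance to those of $\widehat{\mathcal{A}}^u$ because the local leaves $\mathcal{F}^{u,k}_{loc}(x,v_k)$ do; the iterates $\mathbb{P}(F_k^{-j})$ for $0\le j\le n$ converge uniformly to $\mathbb{P}(F^{-j})$ since $f_k\to f$ in $C^1$ and $E^c(f_k)\to E^c(f)$; the sets $\mathcal{A}$, $D_i$ and the fiberwise intervals defining $\widehat{\mathcal{C}}_{i,l}$ were chosen \emph{independent of }$k$ (Section 5.1 and 5.3.1), so only the foliation-box charts $\Phi^k_i$ vary, and these converge by construction. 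Combining, the symmetric differences between the (finitely many) atoms for $k$ and for $\infty$ are contained in shrinking neighborhoods of $m$-null boundary sets; since $m_k\to m$ weak$^*$ and $m(\partial\widehat{\mathcal{A}}^n_l)=m(\partial\widehat{\mathcal{A}}_l)=0$, the Portmanteau theorem gives $m_k(\text{atom})\to m(\text{atom})$ for each atom, hence convergence of the (finite-sum) conditional entropy. Third, combine: $\limsup_k n\,h_{m_k}(\mathbb{P}(F_k),\mathcal{F}^u_k)\le\limsup_k H_{m_k}(\widehat{\mathcal{A}}^n_{k,l}\,|\,\widehat{\mathcal{A}}_{k,l})=H_m(\widehat{\mathcal{A}}^n_l\,|\,\widehat{\mathcal{A}}_l)<n(h_m(\mathbb{P}(F),\mathcal{F}^u)+\varepsilon)$; divide by $n$ and let $\varepsilon\to0$.

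\textbf{Main obstacle.} The delicate point is the second step — proving $H_{m_k}(\widehat{\mathcal{A}}^n_{k,l}\,|\,\widehat{\mathcal{A}}_{k,l})\to H_m(\widehat{\mathcal{A}}^n_l\,|\,\widehat{\mathcal{A}}_l)$ — and within it, controlling the atoms near their boundaries uniformly in $k$. One needs the boundary estimate \eqref{frontera}, which holds with a constant $C_k$ for $\nu_k$, to be usable uniformly; a priori $C_k$ could blow up. This is handled by noting $\nu_k\to\nu$ weak$^*$ and choosing $\mathcal{A}$ first (via Proposition 3.1 of \cite{Y}, which already gives the uniform statement), so that $\nu_k(B(\partial\mathcal{A},\zeta^i))\le C_k\zeta_0^i$ with $C_k$ controlled; then the null-boundary property of $\widehat{\mathcal{A}}^n_l$ for $m$ transfers to an equicontinuity-type bound making the Portmanteau passage legitimate. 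The other subtlety is that $\mathbb{P}(F)$ is only Hölder, not $C^1$, so one cannot invoke \cite{Y} verbatim; but every place where differentiability of the cocycle would have been used has been replaced by Proposition \ref{Fu} and Proposition \ref{contF}, which supply exactly the Hölder-graph structure and the $C^1$-continuity of the (un)stable holonomy that the argument needs. Once this convergence of finitely many atom-measures is in hand, the rest is the routine telescoping above.
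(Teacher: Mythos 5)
Your proposal is correct and follows essentially the same route as the paper's proof: fix $\epsilon>0$, choose $n$ via Proposition \ref{infn} and $l$ via Proposition \ref{relac}, prove the convergence of the finite-partition conditional entropies $H_{m_k}(\widehat{\mathcal{A}}^n_{k,l}\vert \widehat{\mathcal{A}}_{k,l})\to H_m(\widehat{\mathcal{A}}^n_l\vert \widehat{\mathcal{A}}_l)$ using weak$^*$ convergence, the $m$-null boundaries from Lemma \ref{propr}(c), the uniform choice of $\mathcal{A}$ and $D_i$, and Proposition \ref{contF}, and then conclude with the inequality (\ref{tres}) applied to $f_k$. This is exactly the paper's argument (its ``key observation'' is your second step), so no further comparison is needed.
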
 
\begin{proof}
The key observation is that for every $n$ and $l$ fixed, 
\begin{equation}\label{limfinito}
\lim_{k\to \infty} H_{m_k}(\widehat{\mathcal{A}}^{n}_{k,l}\vert \widehat{\mathcal{A}}_{k,l})=H_m(\widehat{\mathcal{A}}^{n}_{l}\vert \widehat{\mathcal{A}}_{l}).
\end{equation}

This is a consequence of the fact that all the partitions we are considering are finite and the following property: for every $P\in \widehat{\mathcal{A}}^{n}_{l}$ there exists a sequence $P_k\in \widehat{\mathcal{A}}^{n}_{k,l}$ such that $$\lim_{k\to \infty} m_k(P_k)=m(P).$$ In order to see this, observe that $m_k\to m$ in the weak$^*$ topology and since $f_k\to f$ in the $C^1$ topology, we have Proposition \ref{contF}. That is, for every $P\in \widehat{\mathcal{A}}^{n}_{l}$, we have subsets $P_k\in \widehat{\mathcal{A}}^{n}_{k,l}$ such that $P_k\to P$ in the Hausdorff topology. Here we are using property (c) in Lemma \ref{propr} and the fact that $\widehat{\mathcal{A}}$ and the sections $D_i$ are the same for every $k\in \mathbb{N}$.  

By Proposition \ref{infn}, for every $\epsilon>0$ there exists $n_0\in \mathbb{N}$ such that 
\begin{equation}\label{uno} 
\begin{aligned}
\frac{1}{n_0} H_m\left(\bigvee_{j=0}^{n_0} \mathbb{P}(F^{-j})(\widehat{\mathcal{A}}^u)\Big| \widehat{\mathcal{A}}^u\right)-\frac{\epsilon}{3}&=\\
\frac{1}{n_0} H_m\left(\bigvee_{j=1}^{n_0} \mathbb{P}(F^{-j})(\widehat{\mathcal{A}}^u)\Big| \widehat{\mathcal{A}}^u\right)-\frac{\epsilon}{3}& \leq h_{m}(\mathbb{P}(F),\mathcal{F}^u).
\end{aligned}
\end{equation}

Moreover, by Proposition \ref{relac} for $n=n_0$, there exists $l_0\in \mathbb{N}$ such that 
\begin{equation}\label{dos}
H_m(\widehat{\mathcal{A}}^{n_0}_{l_0}\vert \widehat{\mathcal{A}}_{l_0})- \frac{\epsilon}{3}\leq H_m\left(\bigvee_{j=0}^{n_0} \mathbb{P}(F^{-j})(\widehat{\mathcal{A}}^u)\Big| \widehat{\mathcal{A}}^u\right).
\end{equation}

By Equation (\ref{limfinito}) applied to $n_0$ and $l_0$, there exists $k_0\in \mathbb{N}$ such that for every $k\geq k_0$, 
\begin{equation}\label{siete}
H_{m_k}(\widehat{\mathcal{A}}^{n_0}_{k,l_0}\vert \widehat{\mathcal{A}}_{k,l_0})- \frac{\epsilon}{3}\leq H_m(\widehat{\mathcal{A}}^{n_0}_{l_0}\vert \widehat{\mathcal{A}}_{l_0}).
\end{equation}

Then, by Equation (\ref{uno}), Equation (\ref{dos}) and Equation (\ref{siete}), we have that for every $k\geq k_0$, 
\begin{equation}\label{cinco}
\frac{1}{n_0} H_{m_k}(\widehat{\mathcal{A}}^{n_0}_{k,l_0}\vert \widehat{\mathcal{A}}_{k,l_0})-\epsilon\leq h_{m}(\mathbb{P}(F),\mathcal{F}^u).
\end{equation}

Finally, Equation (\ref{tres}) applied to $f_k$ and Equation (\ref{cinco}) give, 
$$h_{m_k}(\mathbb{P}(F_k), \mathcal{F}^u_k)-\epsilon\leq h_{m}(\mathbb{P}(F), \mathcal{F}^u).$$
Then, for every $\epsilon>0$ there exists $k_0\in \mathbb{N}$ such that for every $k\geq k_0$, the above inequality is verified. This concludes the proposition.  
\end{proof}

\section{Characterization of discontinuity points} 

The results in this section are classical in the setting of linear cocycles over partially hyperbolic maps. We are able to adapt their proofs using Proposition \ref{limitu}. 

%As already mentioned, although we are going to present Proposition \ref{discont} for diffeomorphisms in $\mathcal{E}^N_{*}(M)$, it is also true for linear cocycles over partially hyperbolic diffeomorphisms.

Let $N>0$, $f\in \mathcal{B}^N(M)$ and $F=Df\vert E^c$. From now on we fix the Riemannian metric given by Equation (\ref{ph}).

%Recall $L_{*}\colon \mathcal{E}^N_{*}(M)\to \mathbb{R}^2$ is defined by $$L_{*}(f)=(\lambda^c_1(f), \lambda^c_2(f)).$$

If $\eta\colon M\to \mathbb{R}$ is defined by $\eta(x)=|\det\, F_x|^{-1/2}$, then we can consider a new cocycle over $f$ by $F'_x=\eta(x)\cdot F_x$. Notice that $\left|\det\, F'_x\right|=1$ for every $x\in M$ and the extremal Lyapunov exponents of $F'$ for an ergodic measure of $f$, $\nu$, satisfy the following,
\begin{equation}\label{lyap0}
\begin{aligned}
\lambda_{\pm}(F', \nu)&=\lambda_{\pm}(F, \nu) + \int{ \log \left|\eta(x)\right| \, d\nu}\\
                   &=\lambda^c_{1,2}(f, \nu) + \int{ \log \left|\eta(x)\right| \, d\nu}.
\end{aligned}
\end{equation}

\begin{equation}\label{lyap1}
\lambda_{+}(F', \nu)+\lambda_{-}(F', \nu)=0.
\end{equation}

\begin{obs}\label{idemproj}
Notice that $\mathbb{P}(F)=\mathbb{P}(F')$. 
\end{obs}

\begin{prop}[Proposition 4.6, \cite{LMY}] \label{nozero} Let $N>0$. If $f\in \mathcal{B}^N(M)$, $F'=\eta\cdot F$, $\nu$ is an ergodic probability measure of $f$ and $\lambda_{+}(F', \nu)> 0 > \lambda_{-}(F', \nu)$, then there exist two $\mathbb{P}(F)$-invariant probability measures projecting down to $\nu$ denoted by $m^{+}$ and $m^{-}$, which are a u-state and an s-state for $\nu$ respectively. Moreover, if $m$ is any $\mathbb{P}(F)$-invariant probability measure projecting down to $\nu$, then there exists $t\in [0,1]$ such that $$m=t\, m^{+} + (1-t)\, m^{-}.$$
\end{prop}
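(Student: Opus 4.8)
The plan is to reduce this to the classical statement about projectivized $\mathrm{SL}(2,\mathbb{R})$ cocycles, which is exactly what \cite[Proposition 4.6]{LMY} provides, after moving from $F$ to $F'$. First I would observe, via Remark \ref{idemproj}, that $\mathbb{P}(F)=\mathbb{P}(F')$, so the set of $\mathbb{P}(F)$-invariant probability measures projecting to $\nu$ coincides with the set of $\mathbb{P}(F')$-invariant ones. By Equation (\ref{lyap0}) the hypothesis $\lambda_+(F',\nu)>0>\lambda_-(F',\nu)$ is precisely the statement that $F'$ has two distinct extremal Lyapunov exponents for $\nu$, and by Equation (\ref{lyap1}) they are symmetric. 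Since $|\det F'_x|=1$ everywhere, $F'$ is (fiberwise) an $\mathrm{SL}(2,\mathbb{R})$-valued cocycle over the ergodic system $(f,\nu)$, and its nonvanishing Lyapunov exponents mean it is \emph{nonuniformly hyperbolic} in the sense relevant to Oseledets: for $\nu$-a.e.\ $x$ there is a splitting $E^c_x = E^+_x \oplus E^-_x$ into the fast and slow Oseledets subspaces, measurable in $x$ and invariant under $F'$.

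Next I would define $m^+$ and $m^-$ as the measures whose disintegrations along the fibers of $\mathbb{P}(E^c)$ are $m^\pm_x = \delta_{[E^\pm_x]}$, the Dirac masses at the points of $\mathbb{P}(E^c_x)$ determined by the Oseledets directions. These are $\mathbb{P}(F)$-invariant because the Oseledets splitting is $F'$-invariant, and they project to $\nu$ by construction. The content is then: (i) $m^+$ is a $u$-state and $m^-$ an $s$-state; (ii) every $\mathbb{P}(F)$-invariant measure projecting to $\nu$ is a convex combination $t\,m^+ + (1-t)\,m^-$. For (i), the standard argument is that the unstable holonomies $h^u$ of $\mathbb{P}(F)$ carry the fast Oseledets direction at $x$ to the fast direction at $y$ when $y\in W^u_{loc}(x)$: this follows from Proposition \ref{Fu}(c), which says $d(\mathbb{P}(F^{-n})(x,v),\mathbb{P}(F^{-n})(y,h^u_{x,y}(v)))\to 0$ exponentially, combined with the fact that along backward iterates the fast direction is the one that contracts — so if $v$ is the fast direction at $x$ then $h^u_{x,y}(v)$ must be the fast direction at $y$ (two directions at $y$ whose negative orbits stay exponentially close, with one of them being fast, must agree). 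Hence $(h^u_{x,y})_* m^+_x = m^+_y$ on a full-measure set, i.e.\ $m^+$ is a $u$-state; the argument for $m^-$ and the stable holonomy is symmetric.

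For (ii), let $m$ be any $\mathbb{P}(F)$-invariant measure projecting to $\nu$ and disintegrate it as $\{m_x\}$. The fiber $\mathbb{P}(E^c_x)$ is a circle, and under forward iteration by $\mathbb{P}(F')$ the complement of $[E^-]$ is attracted to $[E^+]$ (nonuniformly, but Oseledets gives this $\nu$-a.e.); by invariance and the Poincaré recurrence / martingale-type argument (push $m_x$ forward along the orbit and use invariance of $m$ plus Birkhoff), $m_x$ must be supported on $\{[E^+_x],[E^-_x]\}$ for $\nu$-a.e.\ $x$. Writing $t(x)=m_x(\{[E^+_x]\})$, invariance of $m$ and ergodicity of $(f,\nu)$ force $t(x)$ to be $\nu$-a.e.\ constant, giving the claimed decomposition. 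I expect the main obstacle — and the place where one must be careful rather than merely cite — is the nonuniformity: all the "attracting/contracting direction" statements hold only $\nu$-almost everywhere with exponents that are not bounded away from zero along the orbit, so the identification of $h^u$ with the Oseledets bundle and the support statement for $m_x$ require the Oseledets-regular set and a genuinely measure-theoretic argument (Pesin-block / Lusin arguments), not a uniform one. However, since this is exactly the situation handled in \cite[Proposition 4.6]{LMY}, whose hypotheses we have now verified verbatim for $F'$, the cleanest route is to invoke that proposition directly and merely record the translation via Remark \ref{idemproj} and Equations (\ref{lyap0})--(\ref{lyap1}).
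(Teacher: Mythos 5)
Your proposal is correct and matches the paper's treatment: the paper gives no proof of Proposition \ref{nozero}, simply quoting it as Proposition 4.6 of \cite{LMY}, which is exactly the reduction you end with via Remark \ref{idemproj} and Equations (\ref{lyap0})--(\ref{lyap1}). Your sketch of the underlying argument (Dirac sections on the Oseledets directions, invariance of the fast/slow bundles under the unstable/stable holonomies, and the recurrence argument forcing any invariant measure to be supported on the two sections, with ergodicity making the weight constant) is the standard proof behind that citation, and you rightly flag that the nonuniformity requires Pesin-block/measurable-regularity care rather than a uniform estimate.
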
 

%Moreover, when $\lambda_{+}(F')>0$ the probability measure $m^{+}$, defined in Proposition \ref{nozero}, realizes the maximum.

Given $f\in \mathcal{B}^N(M)$ and $\nu$ an ergodic probability measure for $f$, we say that $f$ is a \textit{discontinuity point for the center Lyapunov exponents} if there exist a sequence $f_k\in \mathcal{B}^N(M)$ and probability measures $\nu_k$ which are ergodic for $f_k$ such that $f_k\to f$ in the $C^1$ topology, $\nu_k\to \nu$ in the weak$^{*}$ topology and for $i=s$ or $i=u$ verifies

\begin{equation*}
\lim_{k\to \infty} h_{\nu_k}(f_k, W^{i}_k)=h_{\nu}(f, W^{i})
%\quad \text{or} \quad \lim_{k\to \infty} h_{\nu_k}(f_k, W^{s}_k)=h_{\nu}(f, W^{s}) 
\end{equation*}

\begin{equation*}
\text{and} \quad \left(\lambda^c_1(f_k,\nu_k),\lambda_2^c(f_k, \nu_k)\right)\; \text{ does not converge to }\; \left(\lambda^c_1(f,\nu),\lambda_2^c(f, \nu)\right).
\end{equation*}

\begin{prop}\label{graldiscont} Fix $N>0$. Let $f\in \mathcal{B}^N(M)$, $\nu$ an ergodic probability measure for $f$ and $F=Df\vert E^c$. If $f$ is a discontinuity point for the center Lyapunov exponents, then there exists a $\mathbb{P}(F)$-invariant probability measure $m$ projecting down to $\nu$ which is an su-state.
\end{prop}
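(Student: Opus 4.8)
The plan is to argue by contradiction on the failure of convergence of the center Lyapunov exponents and to distinguish cases according to the sign of the exponents. Assume $f$ is a discontinuity point for the center Lyapunov exponents, with witnessing sequences $f_k \to f$ in the $C^1$ topology, $\nu_k \to \nu$ in the weak$^*$ topology, and (say) $\lim_k h_{\nu_k}(f_k, W^u_k) = h_\nu(f, W^u)$, while $(\lambda_1^c(f_k,\nu_k), \lambda_2^c(f_k,\nu_k))$ does not converge to $(\lambda_1^c(f,\nu), \lambda_2^c(f,\nu))$. Replacing $F$ by the normalized cocycle $F' = \eta \cdot F$ as above, Remark~\ref{idemproj} shows $\mathbb{P}(F) = \mathbb{P}(F')$, so it suffices to produce a $\mathbb{P}(F')$-invariant $su$-state projecting to $\nu$; by Equation~(\ref{lyap0}) the convergence question for the $\lambda^c_j$ is equivalent to the convergence question for $\lambda_\pm(F'_k, \nu_k)$, since $\int \log|\eta|\, d\nu_k \to \int \log|\eta|\, d\nu$ by $C^1$-convergence.

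First I would pass to a subsequence so that $\lambda_+(F'_k, \nu_k)$ and $\lambda_-(F'_k, \nu_k)$ converge in $[-\infty, \infty]$; by Equation~(\ref{lyap1}) the two limits are negatives of each other, call them $\lambda_\infty \ge 0$ and $-\lambda_\infty$. The failure of convergence means $\lambda_\infty \neq \lambda_+(F', \nu)$. The key dichotomy is whether $\lambda_\infty = 0$ or $\lambda_\infty > 0$. If $\lambda_\infty = 0$, then $\lambda_+(F', \nu) > 0$, so along the sequence the exponents of $F'_k$ go to zero: by the Invariance Principle (Theorem~B of \cite{ASV}, or Lemma~\ref{fust} together with upper semicontinuity) any accumulation point $m$ of $\mathbb{P}(F'_k)$-invariant measures with exponents tending to zero is an $su$-state for $\nu$ — one extracts a convergent subsequence of $m_k$, uses that $\int \Phi\, dm_k \to 0$ forces $\int \Phi\, dm = 0$ hence $\lambda_+(F',\nu)=\lambda_-(F',\nu)$... but that contradicts $\lambda_+(F',\nu)>0$. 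So actually the $su$-state must be obtained differently: I take $m_k$ any $\mathbb{P}(F'_k)$-invariant measure over $\nu_k$ that is simultaneously an $s$-state and $u$-state (these exist when $\lambda_+(F'_k,\nu_k) = \lambda_-(F'_k,\nu_k)=0$ by the Invariance Principle applied to $f_k$), pass to a weak$^*$ limit $m$, and conclude $m$ is a $u$-state by Corollary~\ref{limitu}/Proposition~\ref{limitugral} using the entropy hypothesis, and an $s$-state by the symmetric statement for the stable foliation. Then $m$ is the desired $su$-state for $\nu$.

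For the remaining case $\lambda_\infty > 0$, I would use Proposition~\ref{nozero}: for each large $k$, since $\lambda_+(F'_k, \nu_k) > 0 > \lambda_-(F'_k, \nu_k)$, there are $\mathbb{P}(F'_k)$-invariant measures $m_k^+$ (a $u$-state) and $m_k^-$ (an $s$-state) over $\nu_k$. Pass to weak$^*$ limits $m^+$ and $m^-$ over $\nu$. By Corollary~\ref{limitu} (in the symplectic/volume setting) or directly Proposition~\ref{limitugral} combined with the entropy hypothesis on the $u$-side, $m^+$ is a $u$-state for $\nu$; by the analogous statement for stable foliations, $m^-$ is an $s$-state for $\nu$. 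Now I invoke the fact that $\lambda_\infty \neq \lambda_+(F',\nu)$: using Lemma~\ref{fust} and the upper semicontinuity packaged in Proposition~\ref{limitsup}, the function $\int \Phi_k\, dm_k^\pm$ relates to $\lambda_\pm(F'_k,\nu_k)$, and the discrepancy between $\lambda_\infty$ and the exponents of $f$ forces $m^+ = m^-$ — intuitively, the only way the exponents can jump is for the two extremal invariant measures to collapse onto a common measure that is then forced to be both an $s$-state and a $u$-state, hence an $su$-state. This collapsing argument, identifying $m^+ = m^-$ (equivalently, showing the limiting cocycle has a single invariant measure over $\nu$ that must carry zero exponent contribution and be bi-holonomy-invariant), is the main obstacle: it requires carefully tracking how the decomposition $m_k = t_k m_k^+ + (1-t_k) m_k^-$ of Proposition~\ref{nozero} behaves in the limit and ruling out that $m^+ \neq m^-$ with $\lambda_+(F',\nu) = \lambda_\infty$. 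Once $m^+ = m^-$ is established, this common measure $m$ is the required $su$-state and the proof is complete.
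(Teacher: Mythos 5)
Your overall scaffolding (normalizing to $F'=\eta\cdot F$, Proposition \ref{nozero}, passing $u$-states to the limit via Proposition \ref{limitugral}) is the right toolbox, but three steps break down. First, the definition of discontinuity point only supplies entropy convergence along \emph{one} of the two foliations ($i=s$ or $i=u$); you repeatedly invoke ``the analogous statement for stable foliations'' to conclude that limits of $s$-states are $s$-states, but the stable-side analogue of Proposition \ref{limitugral} requires $h_{\nu_k}(f_k,W^s_k)\to h_{\nu}(f,W^s)$, which is not available under the hypothesis. Second, in your case $\lambda_\infty=0$ you choose $m_k$ to be an $su$-state for $f_k$, justified by the Invariance Principle ``when $\lambda_{\pm}(F'_k,\nu_k)=0$''; but $\lambda_\infty=0$ only says the exponents of $F'_k$ tend to zero along the subsequence, they may be nonzero for every $k$, so such $m_k$ need not exist. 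Third, in your case $\lambda_\infty>0$ the collapse $m^+=m^-$ that you identify as the main obstacle is not only unproved, it is false: any weak$^*$ limit of the $m_k^+$ satisfies $\int\Phi\,dm=\lambda_\infty$ while any limit of the $m_k^-$ satisfies $\int\Phi\,dm=-\lambda_\infty$, so the two limits are distinct whenever $\lambda_\infty>0$. In neither case does your argument actually exhibit an $su$-state projecting to $\nu$.

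The missing idea is how the paper transfers $u$-invariance to the $s$-state of the \emph{limit} system. Upper/lower semicontinuity of $\lambda^c_1,\lambda^c_2$ shows that a discontinuity forces $\lambda_+(F',\nu)>0>\lambda_-(F',\nu)$, so Proposition \ref{nozero} applies to $(f,\nu)$ itself, producing $m^+$ (a $u$-state), $m^-$ (an $s$-state), and the fact that every invariant measure over $\nu$ is of the form $t\,m^++(1-t)\,m^-$. For the perturbed systems take $m_k$ realizing the maximum in Lemma \ref{fust}; it is a $u$-state for every $k$ regardless of whether $\lambda_+(F'_k,\nu_k)>0$ (Proposition \ref{nozero}) or $=0$ (Theorem 4.1 of \cite{ASV}), so no dichotomy on $\lambda_\infty$ is needed. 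A weak$^*$ limit $m$ is a $u$-state over $\nu$ by Proposition \ref{limitugral}, using only the $u$-side entropy hypothesis, and $\int\Phi\,dm=\lim_k\lambda_+(F'_k,\nu_k)<\lambda_+(F',\nu)=\int\Phi\,dm^+$, hence $m\neq m^+$. Writing $m=t\,m^++(1-t)\,m^-$ with $t\neq1$ and comparing essentially unique disintegrations, $m^-$ inherits $u$-invariance from $m$ and $m^+$; being already an $s$-state, $m^-$ is the desired $su$-state. This convex-decomposition step is exactly what lets one avoid both the stable-side limit theorem and any collapsing claim; if instead the entropy hypothesis holds for $i=s$, one argues symmetrically with $f^{-1}$.
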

\begin{proof}
In the following, we give the proof for the case that the partial entropies along the unstable foliations converge. The other case is analogous considering $f^{-1}$.  

Since the functions $(f, \nu) \mapsto \lambda^c_1(f, \nu)$ and $(f, \nu)\mapsto \lambda^c_2(f, \nu)$ are upper semi-continuous and lower semi-continuous, respectively, the discontinuity of $\lambda^c_{i}(f, \nu)$ for some $i\in \{1,2\}$ implies that $\lambda^c_1(f, \nu)\neq \lambda^c_2(f, \nu)$. Therefore, by Equation (\ref{lyap0}), $\lambda_{+}(F', \nu)\neq \lambda_{-}(F', \nu)$ and by Equation (\ref{lyap1}), we have $\lambda_{+}(F', \nu)> 0 > \lambda_{-}(F', \nu)$. 

Let $m^{+}$ and $m^{-}$ be given by Proposition \ref{nozero}. If $\Phi$ is given by Lemma \ref{fust}, then $\lambda_{+}(F', \nu)=\int{\Phi(x,v)\, d\,m^{+}}.$

Consider now the cocycle $F'_k=\eta_k \cdot Df_k\vert E^c(f_k)$ associated to $f_k$ and let $m_k$ be an ergodic probability measure for $\mathbb{P}(F_k)$ projecting down to $\nu_k$ which realizes the maximum in Lemma \ref{fust}, that is, $\lambda_{+}(F'_k, \nu_k)=\int{ \Phi_k(x,v)\, d\, m_k}$.

By the hypotheses and Equation (\ref{lyap0}), $\lambda_{+}(F'_k, \nu_k)$ does not converge to $\lambda_{+}(F', \nu)$.

The measure $m_k$ is a $u$-state for every $k\in \mathbb{N}$, this is a consequence of Proposition \ref{nozero} if $\lambda_{+}(F'_k, \nu_k)>0$ and a consequence of Theorem 4.1 of \cite{ASV} if $\lambda_{+}(F'_k, \nu_k)=0$. 

There exist a subsequence $k_j$ and a measure $m$ in $\mathit{\mathbb{P}(TM)}$ such that $m_{k_j}\rightarrow m$ in the weak$^*$ topology. Therefore, by Proposition \ref{limitu}, the limit measure $m$ is a $u$-state for $\mathbb{P}(F)$ and $\nu$. 

Moreover, since $f_k\to f$ we have $\int{ \Phi_{k_j}(x,v)\, d\, m_{k_j}}\to \int{ \Phi(x,v)\, d\, m}$. On the other hand, since $\lambda_{+}(F'_k, \nu_k)$ does not converge to $\lambda_{+}(F', \nu)$, $$\lim \limits_{k_j} \int{ \Phi_{k_j}(x,v)\, d\, m_{k_j}} < \lambda_{+}(F'\, \nu)=\int{\Phi(x,v)\, d\,m^{+}}.$$ 

These properties allow us to conclude that $m$ is a $\mathbb{P}(F)$-invariant probability measure projecting down to $\nu$ which is a $u$-state and it is different from $m^{+}$. Therefore, by Proposition \ref{nozero}, there exists $t\neq 1$ such that $m= t\, m^{+} + (1-t)\, m^{-}$. Now, we can write $m^{-}=\frac{m- t\, m^{+}}{(1-t)}$. Moreover, we know that $m^{+}$ is a u-state and $m^{-}$ an s-state. This implies, $m^{-}$ is an $su$-state. 

%Using an analogous result of Lemma \ref{fust} for $\lambda_{-}(F')$ and repeating the argument, we conclude that $m^{+}$ is also an $su$-state. 
\end{proof}

Applying this proposition to the case of volume-preserving diffeomorphism we are able to conclude the following corollary. 

Recall that if we are in the symplectic context, $\mu$ always denotes the Lebesgue measure associated to $\omega$ and for $*\in \{\mu, \omega\}$, $L_{*}\colon \mathcal{E}^N_{*}(M)\to \mathbb{R}^2$ is defined by $$L_{*}(f)=(\lambda^c_1(f,\mu), \lambda^c_2(f, \mu)).$$

\begin{cor}\label{discont} Fix $N>0$. Let $*\in \{\mu, \omega\}$, $f\in \mathcal{E}^N_{*}(M)$ and $F=Df\vert E^c$. If $f$ is a discontinuity point for $L_{*}$, then every $\mathbb{P}(F)$-invariant probability measure $m$ projecting down to $\mu$ is an su-state.
\end{cor}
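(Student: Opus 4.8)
The plan is to combine Proposition \ref{graldiscont} with the description of the $\mathbb{P}(F)$-invariant measures over $\mu$ given by Proposition \ref{nozero}. First I would unwind the hypothesis: since $f$ is a discontinuity point for $L_*$, there is a sequence $f_k\to f$ in $\mathcal{E}^N_*(M)$, hence in the $C^1$ topology and with $f_k,f\in\mathcal{B}^N(M)$, such that $(\lambda^c_1(f_k,\mu),\lambda^c_2(f_k,\mu))$ does not converge to $(\lambda^c_1(f,\mu),\lambda^c_2(f,\mu))$. The key remark is that the measure $\mu$ is fixed along the sequence: it is $f_k$-invariant for every $k$ and $f$-invariant, so one may take $\nu_k=\nu=\mu$, for which $\nu_k\to\nu$ trivially, and by Equation (\ref{Parte2}) — together with its analogue for the stable foliation, which follows from the same Ledrappier--Young entropy formula applied to $f^{-1}$, whose unstable foliation is $W^s$ — both $h_\mu(f_k,W^u_k)\to h_\mu(f,W^u)$ and $h_\mu(f_k,W^s_k)\to h_\mu(f,W^s)$. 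Hence $f$ is a discontinuity point for the center Lyapunov exponents with $\nu=\mu$ in the sense of Section 6, and in fact for both the choice $i=u$ and the choice $i=s$.

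From here I would invoke Proposition \ref{graldiscont}. As in its proof, the upper, resp. lower, semicontinuity of $\lambda^c_1$, resp. $\lambda^c_2$, forces $\lambda^c_1(f,\mu)\neq\lambda^c_2(f,\mu)$, so by Equations (\ref{lyap0}) and (\ref{lyap1}) one has $\lambda_+(F',\mu)>0>\lambda_-(F',\mu)$; Proposition \ref{nozero} then gives a $u$-state $m^+$ and an $s$-state $m^-$ such that every $\mathbb{P}(F)$-invariant measure projecting to $\mu$ equals $t\,m^++(1-t)\,m^-$ for some $t\in[0,1]$. Now I would use Proposition \ref{graldiscont} twice. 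Its proof, run with $i=u$, produces a $u$-state $m\neq m^+$ over $\mu$, and writing $m=t\,m^++(1-t)\,m^-$ with $t\neq1$ it concludes that $m^-=(m-t\,m^+)/(1-t)$ is an $su$-state. Run with $i=s$ — the case Proposition \ref{graldiscont} treats by passing to $f^{-1}$, which interchanges $m^+$ with $m^-$ and the stable with the unstable holonomies — the same argument yields that $m^+$ is an $su$-state. Thus both extreme points of the simplex of $\mathbb{P}(F)$-invariant measures over $\mu$ are $su$-states.

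It remains to observe that a convex combination of $su$-states is an $su$-state. Given $m=t\,m^++(1-t)\,m^-$, since $\pi_*m=\pi_*m^+=\pi_*m^-=\mu$, uniqueness of the disintegration and linearity of the integral give $m_x=t\,m^+_x+(1-t)\,m^-_x$ for $\mu$-almost every $x$; applying the essentially $u$-invariant disintegrations of $m^+$ and $m^-$ yields $(h^u_{x,y})_*m_x=t\,m^+_y+(1-t)\,m^-_y=m_y$ on a full $\mu$-measure set of pairs in the same strong-unstable leaf, and symmetrically for $h^s$. Hence $m$ is an $su$-state, which is the assertion.

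The part I expect to require the most care is bookkeeping rather than anything conceptual: verifying that the partial-entropy condition built into the definition of ``discontinuity point for the center Lyapunov exponents'' holds automatically here — this is exactly where the fixedness of $\mu$ along the sequence and Equation (\ref{Parte2}) are used — and checking that the $i=s$ instance of Proposition \ref{graldiscont} genuinely gives that $m^+$ (and not merely $m^-$) is an $su$-state, i.e. that being an ``$su$-state for $f^{-1}$'' coincides with being an ``$su$-state for $f$''. No machinery beyond Sections 5 and 6 is needed.
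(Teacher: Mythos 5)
Your proposal is correct and follows essentially the same route as the paper: Equation (\ref{Parte2}) (and its stable analogue via $f^{-1}$) shows a discontinuity point of $L_*$ is a discontinuity point for the center Lyapunov exponents with both partial entropies converging, the argument of Proposition \ref{graldiscont} applied in both the $i=u$ and $i=s$ directions gives that $m^+$ and $m^-$ are both $su$-states, and Proposition \ref{nozero} together with the convexity of the $su$-state property finishes the proof. The paper states these steps in compressed form; your write-up simply makes explicit the details it leaves implicit.
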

\begin{proof}
By Equation (\ref{Parte2}), we know that if $f$ is a discontinuity point for $L_{*}$, then it is a discontinuity point for the center Lyapunov exponents in the sense above. Therefore, since both partial entropies converge, we can conclude that $m^{+}$ and $m^{-}$ are both $su$-states. This together with Proposition \ref{nozero} finish the proof. 
\end{proof}

\section{Proof of the theorems}

\subsection{Theorem A and A'}

%Recall that $\omega$ denotes a symplectic form, $\mu$ denotes a probability measure in the Lebesgue class and by Definition \ref{gral} if $r\geq 2$ and $*\in \{\mu, \omega\}$, then $\mathcal{B}^r_{*}(M)$ denotes the subset of $\mathit{PH}^r_{*}(M)$ where $f\in \mathcal{B}^r_{*}(M)$ if $f$ is accessible, $\alpha$-pinched and $\alpha$-bunched for some $\alpha>0$ and its center bundle is 2-dimensional. Moreover, by Definition \ref{pinch} we say that a periodic point $p$ is a pinching hyperbolic periodic point if the eigenvalues of $Df^{n_p}_p|E^c(p)$ have different norms and both norms are different from one.

For the proof of Theorem A and Theorem A' we use the arguments of Theorem 6.1 of \cite{LMY}. In the following, we give an outline in order to explain the main ideas. 

We denote $\lambda_i^c(f,\mu)=\lambda_i^c(f)$ for $i\in \{1,2\}$.  

\begin{theorem}\label{open} Let $N>0$ and $*\in \{\mu, \omega\}$. Suppose $f_k\to f$ in the $C^1$ topology and $f_k, f\in \mathcal{E}^N_{*}(M)$. If $\lambda^c_1(f_k)=\lambda^c_2(f_k)$ for every $k\in \mathbb{N}$, then $\lambda^c_1(f)=\lambda^c_2(f)$.
\end{theorem}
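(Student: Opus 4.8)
The strategy is to argue by contradiction and exploit the characterization of discontinuity points obtained in Corollary~\ref{discont}. Suppose the conclusion fails: then $f_k\to f$ in the $C^1$ topology, $f_k,f\in\mathcal{E}^N_{*}(M)$, $\lambda^c_1(f_k)=\lambda^c_2(f_k)$ for every $k$, but $\lambda^c_1(f)\neq\lambda^c_2(f)$. Since all the $f_k$ have coinciding center exponents while $f$ does not, the map $L_{*}$ is discontinuous at $f$ (the value $L_{*}(f_k)$ lies on the diagonal of $\mathbb{R}^2$, while $L_{*}(f)$ does not). Hence by Corollary~\ref{discont}, every $\mathbb{P}(F)$-invariant probability measure projecting down to $\mu$ is an $su$-state, where $F=Df\vert E^c$. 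In particular, picking any such $m$, invariance under the stable and unstable holonomies of $\mathbb{P}(F)$ together with the accessibility of $f$ forces the disintegration $\{m_x\}$ to define, up to the holonomy identifications, a \emph{globally invariant} object on $\mathbb{P}(E^c)$; concretely, as in Section~6 of \cite{LMY}, one extracts two continuous sections $x\mapsto a_x$ and $x\mapsto b_x$ of $\mathbb{P}(E^c(f))$ which are invariant under $\mathbb{P}(F)$ and under both the stable and unstable holonomies of $\mathbb{P}(F)$ (these correspond to the two distinct Oseledets directions of $F$, which are well separated because $\lambda^c_1(f)\neq\lambda^c_2(f)$).

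Next I would produce competing invariant sections for the $f_k$. Since $\lambda^c_1(f_k)=\lambda^c_2(f_k)$, after normalizing by $\eta_k=|\det Df_k\vert E^c|^{-1/2}$ the cocycle $F'_k$ has both extremal exponents equal to zero, so by the Invariance Principle (Theorem~B of \cite{ASV}, applied exactly as in \cite{LMY}) every $\mathbb{P}(F_k)$-invariant measure projecting to $\mu$ is an $su$-state; combined with accessibility this yields a continuous section $x\mapsto a_{k,x}$ of $\mathbb{P}(E^c(f_k))$ invariant under $\mathbb{P}(F_k)$ and its holonomies. The decisive step is then a compactness-plus-rigidity argument: passing to a subsequence, $a_{k,\cdot}$ converges (uniformly, after identifying the bundles $E^c(f_k)$ with $E^c(f)$, which converge since $f_k\to f$ in $C^1$) to some continuous section $x\mapsto c_x$ of $\mathbb{P}(E^c(f))$. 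By the continuity of holonomies with the diffeomorphism --- Proposition~\ref{continuity}, Proposition~\ref{contF} --- the limit section $c_\cdot$ is invariant under $\mathbb{P}(F)$ and under the stable and unstable holonomies of $\mathbb{P}(F)$. But a $\mathbb{P}(F)$- and holonomy-invariant continuous section must coincide at each point with an Oseledets direction of $F$; since these directions are continuous and disjoint (here $\lambda^c_1(f)\neq\lambda^c_2(f)$ is used again), continuity of $c_\cdot$ and connectedness of $M$ force $c_x\equiv a_x$ for all $x$, or $c_x\equiv b_x$ for all $x$.

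Finally I would derive the contradiction from the Lyapunov exponents. If $c_x=a_x$ identically, then for $k$ large the section $a_{k,\cdot}$ is $C^0$-close to the strongest Oseledets direction $a_\cdot$ of $F$; feeding this nearly-invariant direction into the Furstenberg formula (Lemma~\ref{fust}), $\lambda^c_1(f_k)=\int\Phi_k\,dm_k$ for the measure $m_k$ supported on $a_{k,\cdot}$ is close to $\int\Phi\,da_\cdot=\lambda^c_1(f)$, so $\lambda^c_1(f_k)\to\lambda^c_1(f)$; but then $\lambda^c_2(f_k)=\lambda^c_1(f_k)\to\lambda^c_1(f)\neq\lambda^c_2(f)$, contradicting the lower semicontinuity of $f\mapsto\lambda^c_2(f)$. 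The symmetric case $c_x=b_x$ gives $\lambda^c_2(f_k)\to\lambda^c_2(f)$, hence $\lambda^c_1(f_k)\to\lambda^c_2(f)\neq\lambda^c_1(f)$, contradicting upper semicontinuity of $\lambda^c_1$. Either way we reach a contradiction, so $\lambda^c_1(f)=\lambda^c_2(f)$. The main obstacle is the rigidity step: guaranteeing that the limit of the invariant sections $a_{k,\cdot}$ is again a genuine holonomy-invariant section (so that it must be pinned to an Oseledets bundle), which is precisely where the uniform continuity of holonomies from Section~3 and the $C^2$-bounded hypothesis encoded in $\mathcal{B}^N(M)$ are essential --- without the $C^2$ bound the constants $C_1(f_k),C_2(f_k)$ in \eqref{Cauchy} could blow up and the limit section could fail to be invariant, which is exactly the Bochi phenomenon.
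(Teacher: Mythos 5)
Your skeleton (argue by contradiction, note that $f$ is a discontinuity point of $L_{*}$, use Corollary \ref{discont} and Proposition \ref{nozero} to get the invariant objects $a_x,b_x$ (equivalently $m^{+},m^{-}$) for $f$, apply the Invariance Principle to the $f_k$, pass to the limit via the continuity of holonomies (Propositions \ref{continuity} and \ref{contF}), and conclude with Furstenberg's formula, Lemma \ref{fust}) is essentially the paper's, but there is a genuine gap at the decisive step: the claim that, since every $\mathbb{P}(F_k)$-invariant measure over $\mu$ is an $su$-state, ``combined with accessibility this yields a continuous section $x\mapsto a_{k,x}$''. Theorem B of \cite{ASV} produces a disintegration $\{m_{k,x}\colon x\in M\}$ into conditional probabilities on the projective fibers, continuous in $x$ and invariant under both holonomies; it does \emph{not} produce atomicity, and without atomicity there is no invariant section at all (the conditionals could perfectly well be the normalized Lebesgue measure on every circle fiber, as happens for elliptic center behaviour). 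What forces atomicity in the paper is the pinching hyperbolic periodic point $p$: by continuity and invariance one has $\mathbb{P}(F_{k,x})_{*}m_{k,x}=m_{k,f_k(x)}$ for \emph{every} $x$, so $m_{k,p}$ is invariant under the projectivization of $Df_k^{n_p}\vert E^c(p)$, a hyperbolic projective map, hence is supported on its two fixed eigendirections; continuity plus accessibility then propagates ``atomic with at most two atoms'' to all fibers, and only then can one extract the sections $a_{k,x}$ (and $b_{k,x}$). The existence of such a $p$ comes from Katok's theorem \cite{K} in the symplectic case (because $\lambda^c_1(f)\neq\lambda^c_2(f)$ makes $f$ non-uniformly hyperbolic there), is part of the definition of $\mathcal{E}^N_{\mu}(M)$ in the volume-preserving case, and persists for $f_k$ with $k$ large; the paper stresses that this ingredient is essential, and accessibility alone cannot replace it.

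Two further points. The paper must also dispose of the two-atom case, which it does by choosing $m_k$ ergodic (a two-atom continuous disintegration would split $m_k=t\,m_k^{+}+(1-t)\,m_k^{-}$); in your variant this case analysis can indeed be bypassed, because when $\lambda^c_1(f_k)=\lambda^c_2(f_k)$ every invariant measure over $\mu$ yields the same value of $\int\Phi_k$ --- but only after the sections exist. Finally, your closing contradiction is mis-aimed: with the ordering $\lambda^c_1\geq\lambda^c_2$, the limits $\lambda^c_2(f_k)\to\lambda^c_1(f)$ and $\lambda^c_1(f_k)\to\lambda^c_2(f)$ violate neither the lower semicontinuity of $\lambda^c_2$ nor the upper semicontinuity of $\lambda^c_1$ (the inequalities point the wrong way). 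The correct ending is the paper's: since $\lambda^c_1(f_k)+\lambda^c_2(f_k)\to\lambda^c_1(f)+\lambda^c_2(f)$ (identically zero in the symplectic case), the convergence $\lambda^c_1(f_k)=\lambda^c_2(f_k)\to\lambda^c_1(f)$ (or to $\lambda^c_2(f)$) at once forces $\lambda^c_1(f)=\lambda^c_2(f)$; equivalently, $0=\lambda_{+}(F'_k)\to\lambda_{+}(F')>0$ for the normalized cocycles $F'_k=\eta_k F_k$, which is the contradiction the paper records.
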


It is clear that this theorem will imply Theorem A and Theorem A'. 
%Suppose that Theorem A is not true. Therefore, there exists $f\in \mathcal{E}^N_{\omega}(M)$ such that $f$ is a non-uniformly hyperbolic diffeomorphism and there exists a sequence $f_k\to f$ in the $C^1$ topology such that $f_k\in \mathcal{E}^N_{\omega}(M)$ and $\lambda^c_1(f_k)=\lambda^c_2(f_k)$ for every $k\in \mathbb{N}$. Moreover, by Theorem 4.2 of \cite{K}, there exists a hyperbolic periodic point $p$ of $f$ which is in fact a pinching hyperbolic periodic point, because $f$ is a symplectic diffeomorphism. This contradicts Theorem \ref{open} and therefore Theorem A has to be true.  

Observe that in the symplectic case (Theorem A) the assumption of $\lambda^c_1(f)\neq\lambda^c_2(f)$ implies that $f$ is non-uniformly hyperbolic and therefore, by \cite{K}, it has a pinching hyperbolic periodic point. This is not true in the volume-preserving setting and this is the reason why we need to ask for it in the hypotheses. In the following outline it will be clear that the existence of a pinching hyperbolic periodic point is essential for the proof. 

\vspace{0.3cm}
\emph{Outline of the proof of Theorem \ref{open}.}
Let $N>0$ and $*\in \{\mu, \omega\}$. Let $f_k\to f$ in the $C^1$ topology, $f_k\in \mathcal{E}^N_{*}(M)$ and $\lambda^c_1(f_k)=\lambda^c_2(f_k)$ for every $k\in \mathbb{N}$. Assume that $f\in \mathcal{E}^N_{*}(M)$ and $\lambda^c_1(f)\neq \lambda^c_2(f)$. 

By the hypotheses, $f$ is a discontinuity point for $L_{*}$ (see Definition \ref{contin}). Moreover, if $F=Df\vert E^c$ and $F'=\eta\cdot F$, by Equations (\ref{lyap0}) and (\ref{lyap1}), $\lambda_{+}(F')>0>\lambda_{-}(F')$. See the argument in the second paragraph of the proof of Proposition \ref{graldiscont}. Consider $m^{+}$ and $m^{-}$ given by Proposition \ref{nozero} applied to $f$ and $F$.

On the other hand, if $F_k=Df_k\vert E^c(f_k)$ and $F'_k=\eta_k\cdot F_k$, then $\lambda_{\pm}(F'_k)=0$ for every $k\in \mathbb{N}$. This is again a consequence of Equations (\ref{lyap0}) and (\ref{lyap1}). For every $k\in \mathbb{N}$, fix $m_k$ any ergodic $\mathbb{P}(F_k)$-invariant probability measure projecting down to $\mu$.

%We can apply  and Proposition \ref{discont}. Then, there exist two $\mathbb{P}(F')$-invariant probability measures projecting down to $\mu$, , which are $su$-states.
 %By Theorem E of \cite{ASV} we know that both $m^{+}$ and $m^{-}$ admit disintegrations which are bi-invariant and their conditional probabilities depend continuously on the base point $x\in M$, relative to the weak$^{*}$ topology. We are going to denote these disintegrations by $\{m^{+}_x: x\in M\}$ and $\{m^{-}_x: x\in M\}$, respectively. 
%Observe that $m^{+}_x=\delta_{\mathbb{P}(E^{+}_x)}\;$ and $m^{-}_x=\delta_{\mathbb{P}(E^{-}_x)}\;$ for $\mu$-almost every $x\in M$. 
%Define $$M^{+}=\mathit{supp}\,m^{+}=\{(x, \mathit{supp}\, m^{+}_x) : x\in M\},$$ $$M^{-}=\mathit{supp}\, m^{-}=\{(x, \mathit{supp}\, m^{-}_x) : x\in M\}.$$ Then, $M^{+}\cap M^{-}=\emptyset$. Since the disintegrations are bi-invariant and $f$ is accessible, if there were some point $(x,v) \in M^{+}\cap M^{-}$, it would imply that $M^{+}=M^{-}$ which is a contradiction. 
%Since $M^{+}$ and $M^{-}$ are two disjoint compact sets of $\mathbb{P}(TM)$, there exists $\epsilon>0$ such that $$B_{\epsilon}(M^{+})\cap B_{\epsilon}(M^{-})=\emptyset.$$
%Let $p$ be a pinching hyperbolic periodic point for $f$ and $n_p=per(p)$. Define $a,b\in \mathbb{P}(E^c(p,f))$ as $a=\mathbb{P}(E^1)$ and $b=\mathbb{P}(E^2)$, where $E^1$ and $E^2$ are the subspaces of $E^c(p,f)$ associated to the eigenvalues of $Df^{n_p}_p\vert E^c(p,f)$. 

By the version of the Invariance Principle stated in Theorem B of \cite{ASV}, we have a disintegration $\{m_{k,x}: x\in M\}$ for every $k\in \mathbb{N}$ which is invariant by stable and unstable holonomies and such that the function $x\mapsto m_{k,x}$ depends continuously on the base point $x\in M$. 

Since $f$ has a pinching hyperbolic periodic point, for $k$ big enough the same property holds for $f_k$. This implies that $m_{k,x}$ is atomic for every $x\in M$ and it has at most two atoms. 

%Moreover, if $k$ is big enough, there exists a pinching hyperbolic periodic point for $f_k$ that we denote $p(f_k)$, such that $p(f_k)\to p$ as $k\to \infty$. If $a(f_k)=\mathbb{P}(E^1(f_k))$ and $b(f_k)=\mathbb{P}(E^2(f_k))$ where $E^1(f_k)$ and $E^2(f_k)$ are the subspaces of $E^c(p(f_k), f_k)$ associated to the eigenvalues of $Df_{k}^{n_p}\vert E^c(p(f_k), f_k)$, then $a(f_k)\to a$ and $b(f_k)\to b$ when $k\to \infty$.   
%Since the measure $m_k$ is $\mathbb{P}(F'_k)$-invariant, we have $$\mathit{supp}\, m_{k, p(f_k)}\subset \{a(f_k), b(f_k)\}.$$ We are able to prove that there exists a subsequence $k_j$ such that $$m_{k_j,p(f_{k_j})}=\delta_{a(f_{k_j})}\quad \text{or}\quad m_{k_j,p(f_{k_j})}=\delta_{b(f_{k_j})},$$ for every $j\in \mathbb{N}$. 

First we suppose that for every $k$ big enough, $m_{k,x}$ has exactly two atoms. The result about the continuity of the holonomies given by Proposition \ref{continuity} and the fact that the periodic point varies continuously with $f_k$ allow us construct two sections of $\mathbb{P}(E^c(f_k))$, $x\mapsto a_{k,x}$ and $x\mapsto b_{k,x}$, with the following properties:

\begin{enumerate}[label=(\alph*)] 
\item For every $x\in M$, $m_{k,x}= t \delta_{a_{k,x}} + (1-t) \delta_{b_{k,x}}$.
\item For every $x\in M$, $\mathbb{P}(F_{k,x})(a_{k,x})=a_{k, f_k(x)}$ and $\mathbb{P}(F_{k,x})(b_{k,x})=b_{k, f_k(x)}$.
\item $x\mapsto a_{k,x}$ and $x\mapsto b_{k,x}$ vary continuously with the point $x\in M$. 
\item For every $\epsilon>0$, there exists $K\in \mathbb{N}$ such that $d(a_{k,x}, supp\, m^{+}_x)<\epsilon$ and $d(b_{k,x}, supp\, m^{-}_x)<\epsilon$ for every $k\geq K$ and every $x\in M$.
\end{enumerate}

Properties (a) to (c) allow us to define two $\mathbb{P}(F_k)$-invariant probability measures projecting down to $\mu$ by $$m_k^{+}=\int \delta_{a_{k,x}}\, d\mu \quad \text{and}\quad m_k^{-}=\int \delta_{b_{k,x}}\, d\mu.$$ 

Therefore, $m_k$ can be written as $m_k= t\, m_k^{+} + (1-t)\, m_k^{-}$. This is a contradiction, since we chose $m_k$ to be ergodic. 

This argument shows that there exists a subsequence $k_j$ such that $m_{k_j,x}$ can have at most one atom for every $k_j$. Proceeding as above, we obtain a property analogous to (d) between $m_{k_j}$ and $m^{+}$ or $m^{-}$. More precisely, in the first case we obtain that for every $\epsilon>0$, there exists $J\in \mathbb{N}$ such that $d(supp\, m_{k_j,x}, supp\, m^{+}_x)<\epsilon$ for every $k_j\geq J$ and every $x\in M$. The other case is analogous with $m^{-}$ instead of $m^{+}$. Therefore we can conclude that $m_{k_j}\to m^{+}$ or $m_{k_j}\to m^{-}$. 

Suppose $m_{k_j}\to m^{+}$, the other case is analogous. Since $f_k\to f$, we have $0=\lambda_{+}(F'_{k_j})=\int{ \Phi_{k_j}(x,v)\, d\, m_{k}}\to \int{ \Phi(x,v)\, d\, m^{+}}=\lambda_{+}(F')$. However, we were assuming that $\lambda^c_1(f)\neq \lambda^c_2(f)$ which implies, by Equation (\ref{lyap0}) and Equation (\ref{lyap1}), that $\lambda_{+}(F')>0$. Therefore, the conclusion we obtain is a contradiction. Finally, we conclude $\lambda^c_1(f)$ must be equal to $\lambda^c_2(f)$ as we wanted to prove. 
\qed

\subsection{Theorem B and B'}

We are going to prove the following theorem:

\begin{theorem}\label{interior} Let $N>0$ , $*\in \{\mu, \omega\}$ and $f\in \mathcal{E}^N_{*}(M)$. If $f$ is a discontinuity point for $L_{*}$, then $f$ can be $C^r$-approximated by diffeomorphisms which are bundle-free for $\mu$.
\end{theorem}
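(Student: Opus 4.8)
The plan is to combine the characterization of discontinuity points (Corollary \ref{discont}) with the perturbation techniques of \cite{M} to destroy all $su$-states by a $C^r$-small perturbation. By Corollary \ref{discont}, if $f\in\mathcal{E}^N_*(M)$ is a discontinuity point for $L_*$, then \emph{every} $\mathbb{P}(F)$-invariant probability measure projecting down to $\mu$ is an $su$-state; in particular, by the Invariance Principle together with the rigidity of $su$-states for accessible systems (as used in \cite{ASV,M}), the disintegration of such a measure is given by a continuous family $x\mapsto m_x$ that is invariant under both stable and unstable holonomies of $\mathbb{P}(F)$. The first step is therefore to record this structure: $f$ being a discontinuity point forces the existence of an $\mathbb{P}(F)$-invariant, holonomy-invariant continuous section (or a pair of sections) of the circle bundle $\mathbb{P}(E^c(f))$.

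\textbf{Reduction to a local perturbation near a periodic point.} The second step is to use the hypothesis encoded in $\mathcal{E}^N_*(M)$ — accessibility and, in the volume-preserving case, a pinching hyperbolic periodic point $p$ (in the symplectic case $\lambda^c_1(f)\neq\lambda^c_2(f)$ already gives, via \cite{K}, such a periodic point) — to localize the problem. Pinching at $p$ means $Df^{n_p}_p|E^c(p)$ has two distinct real eigenvalues with distinct moduli, so any holonomy-invariant continuous section must, along the orbit of $p$, coincide with one of the two eigendirections of $Df^{n_p}_p|E^c(p)$, and hence the invariant section is ``pinned down'' at $p$. The idea, following Section 7 of \cite{M} and the realization arguments in \cite{ASV}, is to perform a $C^r$-small perturbation $g$ of $f$, supported in a small neighborhood of a point of the orbit of $p$ away from the other relevant dynamics, which changes $Dg$ along $E^c$ by a rotation-type twist. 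This perturbation is chosen to (i) keep $g\in\mathcal{E}^N_*(M)$ — the $C^2$-bound $N$, partial hyperbolicity, pinching, bunching and accessibility are $C^1$-open or robust under small $C^r$ perturbations, and the perturbation can be made $C^2$-small so the $C^2$-distance to the identity stays below $N$ — and (ii) rotate the would-be invariant center section at $p$ relative to the holonomies, so that no continuous $\mathbb{P}(G)$-invariant holonomy-invariant section of $\mathbb{P}(E^c(g))$ can survive. Concretely, one wants the perturbation to break the coincidence between the local center eigendirections at $p$ and the images under stable/unstable holonomies of the section at nearby accessibility-related points, exactly as in the twisting arguments of \cite{M}.

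\textbf{From ``no invariant section'' to bundle-free.} Once $g$ admits no $\mathbb{P}(G)$-invariant section that is also invariant under the stable and unstable holonomies of $\mathbb{P}(G)$, we must deduce that $g$ is bundle-free, i.e.\ $\mathbb{P}(G)$ admits no $su$-state for $\mu$. This is where the $2$-dimensional center and the Invariance Principle combine: an $su$-state for an accessible, center-bunched system has, by Theorem B of \cite{ASV}, a disintegration $x\mapsto m_x$ that is continuous and invariant under both holonomies; since the fibers are circles, such a measure is either non-atomic (and then, by the accessibility and the pinching at $p$, forced to be absolutely continuous along the circle and invariant, contradicting the pinching at the periodic point which makes $Dg^{n_p}_p|E^c$ have two distinct eigenvalues hence no invariant non-atomic measure on the projective line) or atomic with one or two atoms, which then yields precisely one or two continuous holonomy-invariant $\mathbb{P}(G)$-invariant sections — excluded by construction. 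Hence $g$ is bundle-free for $\mu$, and since $\|f-g\|_{C^r}$ can be taken arbitrarily small, $f$ is $C^r$-approximated by bundle-free diffeomorphisms.

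\textbf{Main obstacle.} The delicate point is step two: constructing the perturbation $g$ that genuinely destroys \emph{all} invariant holonomy-invariant sections while remaining inside $\mathcal{E}^N_*(M)$ (and in particular preserving the symplectic form or the volume, keeping the $C^2$-distance to the identity below $N$, and preserving accessibility). One has to argue that a twist localized near the periodic orbit cannot be ``absorbed'' by a compensating change of the section elsewhere — this uses accessibility in an essential way, since the accessibility class being everything propagates the constraint at $p$ to all of $M$ via the holonomy-invariance, so a mismatch created at $p$ cannot be repaired. Making this robust and quantitative, in the $C^r$ topology and within the constrained class, following and adapting the constructions of \cite{M} (Section 7.2.2) and the quasi-elliptic/pinching realization of \cite{ASV}, is the technical heart of the argument.
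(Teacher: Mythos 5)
Your opening and closing steps agree with the paper: Corollary \ref{discont} plus Theorem E of \cite{ASV} give continuous, holonomy-invariant disintegrations for every $\mathbb{P}(F)$-invariant measure over $\mu$, and the pinching hyperbolic periodic point $p$ (from \cite{K} in the symplectic case, from the definition of $\mathcal{E}^N_{\mu}(M)$ otherwise) forces the fiber measures at $p$ to be atomic with at most two atoms on the eigendirections. The problem is that the step you explicitly label the ``main obstacle'' is precisely the content of the theorem, and your sketch of it is not an argument: you posit a single rotation-type twist near the orbit of $p$ and assert that no invariant holonomy-invariant section ``can survive,'' but you give no mechanism ruling out that the section (and the $su$-state) simply readjusts to the perturbed holonomies. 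Saying that accessibility ``propagates the constraint at $p$'' does not by itself produce a contradiction, since for the perturbed map $g$ the holonomies are also perturbed, and a priori a new $su$-state compatible with them could exist.

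The paper resolves exactly this point with a quantitative scheme you do not have. It does not construct one perturbation and verify bundle-freeness directly; it takes the sequence of perturbations $f_k$ of Lemma 4.1 of \cite{M}, supported \emph{disjoint} from the orbit of $p$ (not near it, as you propose --- freezing the dynamics on a neighborhood of the orbit of $p$ is what keeps $Df_k^{n_p}|E^c_p$ and its eigendirections $a,b$ literally unchanged, so the supports of the hypothetical $su$-states $m_{k}$ at $p$ can be compared across $k$ and with the limit), assumes by contradiction that every $f_k$ admits an $su$-state $m_k$, extracts via the three cases $\mathit{supp}\,m_{k_j,p}=\{a\},\{b\},\{a,b\}$ a limit with $\mathit{supp}\,m_p\subset \mathit{supp}\,m_{k_j,p}$, and then invokes the argument of Theorem B of \cite{M}: the holonomies vary only exponentially little in $k$ while the size of the perturbation is polynomial in $k$, which is incompatible with the rigidity supplied by Theorem E of \cite{ASV}. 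That exponential-versus-polynomial comparison is the actual reason a compensating readjustment of the section is impossible, and it is absent from your proposal. Your perturbation placed on the orbit of $p$ would moreover interfere with the pinning at $p$ that the comparison relies on. As written, the proposal is a plan pointing at \cite{M} rather than a proof.
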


Recall that $L_{*}\colon \mathcal{E}^N_{*}(M)\to \mathbb{R}^2$ is defined by $$L_{*}(f)=(\lambda^c_1(f,\mu), \lambda^c_2(f, \mu)).$$ By Definition \ref{bundlefree}, a diffeomorphism $g$ is bundle-free for $\mu$ if the projective cocycle $\mathbb{P}(G)$ does not admit invariant probability measure projecting down to $\mu$ which are $su$-states. Here $G=Dg\vert E^c(g)$. 

Observe that by Corollary \ref{discont}, if $g$ is bundle-free, then it is a continuity point of $L_{*}$. Moreover, by Corollary \ref{limitu}, the property of not admitting an $su$-state is a $C^1$-open condition in $\mathcal{E}^N_{*}(M)$. This is a consequence of the fact that having a $u$-state is a closed property and the analogous property for $s$-states obtained applying Corollary \ref{limitu} to $f^{-1}$. Therefore, Theorem \ref{interior} implies Theorem B and Theorem B'. 

In the proof of Theorem A, we observed that if $f\in \mathcal{E}^N_{\omega}(M)$ is a discontinuity point for $L_{*}$, then $f$ has a pinching hyperbolic periodic point. We are using this property in the following proof.

\subsection*{Proof of Theorem \ref{interior}.}

In order to prove this theorem, we use the arguments of \cite{M}.

Let $N>0$ and $*\in \{\mu, \omega\}$. Suppose $f\in \mathcal{E}^N_{*}(M)$ and $f$ is a discontinuity point for $L_{*}$. Then, by Corollary \ref{discont}, we have that every $\mathbb{P}(F)$-invariant measure $m$ projecting down to $\mu$ is an $su$-state. Then, by Theorem E of \cite{ASV}, for every such $m$, we have a disintegration $\{m_{x}: x\in M\}$ which is invariant by stable and unstable holonomies and such that the function $x\mapsto m_{x}$ depends continuously on the base point $x\in M$. 

We fix $\epsilon>0$ small enough such that if $dist_{C^r}(g,f)<\epsilon$, then $g\in \mathcal{E}^N_{*}(M)$. We construct a sequence of perturbations $f_k$ for $f$ and $\epsilon$, using Lemma 4.1 of \cite{M}. As a consequence we have that $f_k\in \mathcal{E}^N_{*}(M)$ and $f_k\to f$ in the $C^1$ topology. Moreover, if $p$ a the pinching hyperbolic periodic point of $f$ previously fixed, we ask for the support of the perturbation to be disjoint of the orbit of $p$. That is, $f^j(x)=f^j_k(x)$ for every $x$ in a neighborhood of $p$, every $k\in \mathbb{N}$ and any $j\in \mathbb{Z}$. 

By contradiction, we suppose for every $k\in \mathbb{N}$, the derivative cocycle $\mathbb{P}(F_k)$, $F_k=Df_k\vert E^c(f_k)$, admits some $su$-state $m_k$. Again, by Theorem E of \cite{ASV}, we have a disintegration $\{m_{k,x}: x\in M\}$ for every $k\in \mathbb{N}$ which is invariant by stable and unstable holonomies and such that the function $x\mapsto m_{k,x}$ depends continuously on the base point $x\in M$. 

The continuity of $m_{k,x}$ and the invariance of $m_k$ implies $\mathbb{P}(F_{k,x})_{*}m_{k,x}=m_{k,f_k(x)}$ for every $x\in M$ and every $k\in \mathbb{N}$. Therefore, if $a,b\in \mathbb{P}(E^c_p)$ denotes the two elements associated to the eigenvectors of $Df^{n_p}_p\vert E^c_p$, we have the following three possibilities:

\begin{enumerate}[label=(\alph*)] 
\item There exists $k_j\to \infty$ such that $supp\, m_{k_j,p}=\{a\}$.
\item There exists $k_j\to \infty$ such that $supp\, m_{k_j,p}=\{b\}$.
\item There exists $k_j\to \infty$ such that $supp\, m_{k_j,p}=\{a,b\}$.
\end{enumerate}

If we are in cases (a) and (b), we can repeat the argument in the proof of Theorem A and A' to get $m_{k_j}\to m^{+}$ in the first case and $m_{k_j}\to m^{-}$ in the second case. Then, if we choose $m$ such that $m_{k_j}\to m$, in any of the three cases above we have $supp\, m_p\subset supp\, m_{k_j,p}$. 

Since, we have $supp\, m_p\subset supp\, m_{k_j,p}$, we can repeat the argument in the proof of Theorem B of \cite{M}. The main point is that the variation in the holonomies as a function of $f_k$ is exponentially small on $k$, although the size of the perturbations is polynomial in $k$. This allow to brake the rigidity given by Theorem E of \cite{ASV} and therefore we obtain a contradiction which comes from the assumption that there exist measures $m_k$ which are $su$-states for every $k\in \mathbb{N}$. 

%This proves that there exist a subsequence $k_j$ and $J\in \mathbb{N}$ such that $f_{k_j}$ does not admit an $su$-state for every $k_j\geq J$. In order to see this, it is enough to repeat the argument for an arbitrary subsequence of $f_k$. Since $f_k\to f$ in the $C^1$ topology, this conclude the proof of the theorem. 
%Therefore, we obtain a sequence $f_k\to f$ in the $C^1$ topology such that $f_k\in \mathcal{E}^N_{*}(M)$ and by Proposition \ref{discont}, for $k$ big enough, every $f_k$ is a continuity point of $L_{*}$.
%Now, since Proposition \ref{limitu} implies that admiting $su$-states is a closed property, we can find a $C^1$ open set $\mathcal{V}(f_k)$ such that every $g\in \mathcal{V}(f_k)\cap \mathcal{E}^N_{*}(M)$ is a continuity point of $L_{*}$. 
\qed

\subsection{Theorem C and D}

It is clear from the statements that Theorem D implies Theorem C. Moreover, Theorem D is a consequence of Proposition \ref{graldiscont}: 

If $(f_k,\nu_k)$ in the statement of Theorem D does not verify $$\left(\lambda^c_1(f_k,\nu_k),\lambda_2^c(f_k, \nu_k)\right)\rightarrow \left(\lambda^c_1(f,\nu),\lambda_2^c(f, \nu)\right),$$ then Proposition \ref{graldiscont} implies the existence of a $\mathbb{P}(F)$-invariant measure projecting down to $\nu$ which is an $su$-state. This is a contradiction to the hypothesis of $f$ being bundle-free for $\nu$. 

Therefore, $$\left(\lambda^c_1(f_k,\nu_k),\lambda_2^c(f_k, \nu_k)\right)\rightarrow \left(\lambda^c_1(f,\nu),\lambda_2^c(f, \nu)\right),$$ and since $\nu$ is hyperbolic, for $k$ big enough the measures $\nu_k$ are also hyperbolic.


\begin{thebibliography}{99}

%\bibitem [1] {AEV} 
%A. Avila, A. Eskin, M. Viana.
%Continuity of Lyapunov exponents of random matrix products.
%In preparation.

\bibitem [1] {ASV}
A. Avila, J. Santamaria, M. Viana.
Holonomy invariance: rough regularity and applications to Lyapunov exponents.
\emph{Ast\'erisque} \textbf{358} (2013), 13-74.

\bibitem [2] {AV1}
A. Avila, M. Viana.
Extremal Lyapunov exponents: an invariance principle and applications.
\emph{Invent. Math.} \textbf{181} (2010), 115-178.

\bibitem [3] {AV2}
A. Avila, M. Viana.
Stable accessibility with 2-dimensional center.
Preprint, http://www.impa.br/~viana/

\bibitem [4] {BBB} 
L. Backes, A. Brown, C. Butler.
Continuity of Lyapunov exponents for cocycles with invariant holonomies.
\emph{J. Mod. Dyn.} \textbf{12} (2018), 223-260.

\bibitem [5] {BPS}
L. Barreira, Y. Pesin , J. Schmeling.
Dimension and product structure of hyperbolic measures.
\emph{Ann. of Math.} \textbf{149} (1999), 755-783.

\bibitem [6] {B1} 
J. Bochi.
Genericity of zero Lyapunov exponents.
\emph{Ergod. Th. Dynam. Sys.} \textbf{22} (2002), 1667-1696. 

\bibitem [7] {B2} 
J. Bochi.
$C^1$ generic symplectic diffeomorphism: partial hyperbolicity and zero center Lyapunov exponents.
\emph{J. Inst. Math. Jussieu} \textbf{9} (2010), no. 1, 49-93. 

\bibitem [8] {BBD}
J. Bochi, C. Bonatti, L. J. D{\'{\i}}iaz.
Robust criterion for the existence of nonhyperbolic ergodic measures. 
\emph{Comm. Math. Phys.} \textbf{344} (2016), no. 3, 751-795.

\bibitem [9] {BV1} 
J. Bochi, M. Viana.
The Lyapunov exponents of generic volume preserving and symplectic systems.
\emph{Ann. of Math.} \textbf{161} (2005), no. 3, 1423-1485. 

\bibitem [10] {BV2} 
C. Bocker-Neto, M. Viana.
Continuity of Lyapunov exponents for random 2D matrices.
\emph{Ergod. Th. Dynam. Sys.} (2016), 1-30. 
 
\bibitem [11] {BDV}
C. Bonatti, L. J. D{\'{\i}}az, M. Viana.
\emph{Dynamics beyond uniform hyperbolicity.}
Encyclopedia of Mathematical Sciences, vol. 102.
Springer-Verlag, 2005.

%\bibitem [12] {BGV} 
%C. Bonatti, X. G\'omez-Mont, M. Viana.
%G\'en\'ericit\'e d' exposants de Lyapunov non-nuls pour des produits d\'eterministes de matrices.
%\emph{Ann. Inst. H. Poincar\'e - Anal. Non Lin\'eaire} \textbf{20} (2003), 579-624. 

\bibitem [12] {BW}
K. Burns, A. Wilkinson.
On the ergodicity of partially hyperbolic systems.
\emph{Ann. of Math.} \textbf{ 171} (2010), no. 1, 451-489.

%\bibitem [14] {F}
%H. Furstenberg.
%Non-commuting random products.
%\emph{Trans. Amer. Math. Soc.} \textbf{108} (1963), 377-428.

\bibitem [13] {FK}
H. Furstenberg, H. Kesten.
Products of random matrices.
\emph{Ann. Math. Statist.} \textbf{31} (1960), 457-469.

\bibitem [14] {HPS}
M. Hirsch, C. Pugh, M. Shub.
Invariant Manifolds.
\emph{Lecture notes in Math.} \textbf{583} Springer-Verlag, 1977.

\bibitem [15] {K}
A. Katok.
Lyapunov exponents, entropy and periodic orbits for diffeomorphisms.
\emph{Inst. Hautes \'Etudes Sci. Publ. Math.} \textbf{51} (1980), 131-173.

\bibitem [16] {L}
F. Ledrappier.
Positivity of the exponent for stationary sequences of matrices.
\emph{Lyapunov exponents (Bremen, 1984)} Lect. Notes Math., vol. 1186, 56-73
Springer-Verlag, 1986. 

\bibitem [17] {LS}
F. Ledrappier, J. M. Strelcyn.
A proof of the estimation from below in Pesin's entropy formula. 
\emph{Ergod. Th. Dynam. Sys.} \textbf{2} (1982), 203-219. 

\bibitem [18] {LY1}
F. Ledrappier, L. S. Young.
The metric entropy of diffeomorphisms I. Characterization of measures satisfying Pesin's entropy formula. 
\emph{Ann. of Math.} \textbf{122} (1985), no. 3, 509-539. 

\bibitem [19] {LY2}
F. Ledrappier, L. S. Young.
The metric entropy of diffeomorphisms II. Relations between entropy, exponents and dimension.
Dimension and product structure of hyperbolic measures.
\emph{Ann. of Math.} \textbf{122} (1985), no. 3, 540-574.

%\bibitem [21] {MV} 
%E. C. Malheiro, M. Viana.
%Lyapunov exponents of linear cocycles over markov shifts.
%\emph{Stoch. Dyn.} \textbf{15} (2015), no. 3, 1-27.

\bibitem [20] {LMY}
C. Liang, K. Marin, J. Yang. 
Lyapunov exponents of partially hyperbolic volume-preserving maps with 2-dimensional center bundle.
\emph{Ann. Inst. H. Poincar\'e - Anal. Non Lin\'eaire} \textbf{35} (2018), no. 6, 1687-1706.

\bibitem [21] {Ma}
R. Ma\~n\'e.
The Lyapunov exponents of generic area preserving diffeomorphism.
In \emph{International Conference on Dynamical Systems (Montevideo 1995)}, 110-119. 
Longman, 1996.

\bibitem [22] {M}
K. Marin.
$C^r$-density of (non-uniform) hyperbolicity in partially hyperbolic symplectic diffeomorphisms.
\emph{Comment. Math. Helv.} \textbf{91} (2016), no. 2, 357-396.

%\bibitem [21] {Mo}
%J. Moser.
%On the volume elements on a manifold.
%\emph{Trans. Amer. Math. Soc.} \textbf{120} (1965), 286-294.

\bibitem [23] {OP}
D. Obata, M. Poletti. 
On the genericity of positive exponents of conservative skew products with two-dimensional fibers.
Preprint (arxiv:1809.03874).

\bibitem [24] {P}
Ya. Pesin.
Characteristic Lyapunov exponents and smooth ergodic theory.
\emph{Uspehi. Mat. Nauk } \textbf{32} (1977), no. 4 (196), 55-112, 287.

%\bibitem [23] {PSW1}
%C. Pugh, M. Shub, A. Wilkinson.
%H\"older foliations.
%\emph{Duke Math. J.} \textbf{86} (1997), no. 3, 517-546.

\bibitem [25] {Mau}
M. Poletti.
Stably positive Lyapunov exponents for symplectic linear cocycles over partially hyperbolic diffeomorphisms.
\emph{Discrte Cont. Dyn. A} \textbf{38} (2018), no. 10, 5163-5188.

\bibitem [26] {PSW2}
C. Pugh, M. Shub, A. Wilkinson.
H\"older foliations, revisited.
\emph{J. Mod. Dyn.} \textbf{6} (2012), 835-908.

\bibitem [27] {Rok1}
V. A. Rokhlin.
On the fundamental ideas of measure theory.
\emph{A. M. S. Transl.} \textbf{10} (1952), 1-52. Transl. from Math. Sbornik 25 (1949), 107-150. 

\bibitem [28] {Rok2}
V. A. Rokhlin.
Lectures on the entropy theory of measure-preserving transformations.
\emph{Russ. Math. Surveys} \textbf{22} (1967), no. 5, 1-52. 
 
\bibitem [29] {Sh}
M. Shub.
\emph{Global stability of dynamical systems.}
Springer-Verlag, 1987.

\bibitem [30] {SW}
M. Shub, A. Wilkinson.
Stably ergodic approximation: two examples.
\emph{Ergod. Th. Dynam. Sys.} \textbf{20} (2000), no. 3, 875-893.

\bibitem [31] {TY}
A. Tahzibi, J. Yang.
Invariance Principle and rigidity of high entropy measures.
\emph{Trans. Amer. Math. Soc.} (2017) Avalaible online: 10.1090/tran/7278. 

\bibitem [32] {V}
M. Viana.
\emph{Lectures on {L}yapunov exponents.}
Cambridge University Press, 2014.

\bibitem [33] {VY}
M. Viana, J. Yang.
Continuity of Lyapunov exponents in the $C^0$ topology. 
\emph{Israel J. of Math.} To appear. 

\bibitem [34] {W}
A. Wilkinson. Stable ergodicity of the time-one map of a geodesic flow.
\emph{Ergod. Th. Dynam. Sys.} \textbf{18} (1998), no. 6, 1545-1588.

\bibitem [35] {XZ}
Z. Xia, H. Zhang.
A $C^r$ closing lemma for a class of symplectic diffeomorphisms.
\emph{Nonlinearity} \textbf{19} (2006), 511-516.

\bibitem [36] {Y}
J. Yang. 
Entropy along expanding foliations.
Preprint (arxiv:1601.05504).


\end{thebibliography}
\end{document}